\begin{document}

\title{Quasi-optimal convergence rate for an adaptive hybridizable $C^0$ discontinuous Galerkin method for Kirchhoff plates}%
\author{
Pengtao Sun\thanks{Department of Mathematical Sciences, University of Nevada Las Vegas, 4505 Maryland Parkway, Las Vegas, NV 89154, USA (pengtao.sun@unlv.edu). The work of this author was partly supported by NSF Grant DMS-1418806.}
\and
Xuehai Huang\thanks{Corresponding author. College of Mathematics and Information Science, Wenzhou University, Wenzhou 325035, China (xuehaihuang@wzu.edu.cn). The work of this author was supported by the NSFC Projects 11301396, and Zhejiang Provincial
Natural Science Foundation of China Projects LY14A010020, LY15A010015 and LY15A010016.}
}%

\maketitle

\begin{abstract}
In this paper, we present an adaptive hybridizable $C^0$ discontinuous Galerkin (HCDG) method  for Kirchhoff plates. A reliable and efficient a posteriori error estimator is
produced for this HCDG method. Quasi-orthogonality and discrete reliability are established with the help of a postprocessed bending moment and  the discrete Helmholtz decomposition. Based on these, the contraction property between two consecutive loops and
complexity of the adaptive HCDG method are studied thoroughly.
The key points in our analysis are a postprocessed normal-normal continuous bending moment from the HCDG method solution and a lifting of jump residuals from inter-element boundaries to element interiors.
%
%
\end{abstract}

\begin{keywords}
a posteriori error estimates, adaptive hybridizable $C^0$ discontinuous Galerkin method, convergence, computational complexity, Kirchhoff plate bending problems
\end{keywords}


\section{Introduction}

Hybridization as an implementational technique can be traced back to \cite{Veubeke1965}, which eliminates the continuity constraints of the finite element space and enforces it by introducing a Lagrange multiplier. As a result, the original indefinite stiffness matrix can be transformed to a symmetric and positive-definite one, and the globally coupled degrees of freedom will be much fewer. It was observed by Arnold and Brezzi \cite{ArnoldBrezzi1985} that
the Lagrange multiplier can be used to construct a new superconvergent approximation of the original variable by postprocessing. In the last decade, Cockburn and his collaborators studied the hybridization of finite element methods systematically and thoroughly (cf. \cite{CockburnGopalakrishnan2004, CockburnGopalakrishnan2005, CockburnGopalakrishnanLazarov2009}), especially for the second order problems.
They presented a characterization of the Lagrange multiplier in a unifying framework in \cite{CockburnGopalakrishnanLazarov2009}, and desgined the hybridizable discontinuous Galerkin (HDG) method for the second order problems (cf. \cite{CockburnDongGuzman2008, CockburnGuzmanWang2009, CockburnGopalakrishnanSayas2010}) which overcomes the drawbacks of the traditional discontinuous Galerkin methods.
Applying the HDG method for the second order problems in \cite{CockburnDongGuzman2008}, a hybridizable and superconvergent discontinuous Galerkin method for the biharmonic equation
based on the Ciarlet-Raviart formulation was given in \cite{CockburnDongGuzman2009}. And we devised a hybridizable $C^0$ discontinuous Galerkin (HCDG) method for Kirchhoff plate bending problems based on the Hellan-Herrmann-Johnson formulation in \cite{HuangHuang2016}, which is also superconvergent and will be the focus of this paper.

There have been lots of works with regard to the a posteriori error analysis of the numerical methods for the fourth-order elliptic problems (cf. \cite{Verfurth1996, Segeth2012}).
Reliable and efficient residual-based a posteriori error estimators were given in \cite{Verfurth1996, Adjerid2002, HuangLaiWang2013} for the fourth-order problems discretized by the $H^2$-conforming finite element methods.
Nonconforming finite element methods are preferred to discretize the fourth-order problems due to their simplicity.
Employing the Helmholtz decomposition of the second order tensors created in \cite{Beirao-da-VeigaNiiranenStenberg2007},
a posteriori error analysis for the Morley element method was shown in \cite{Beirao-da-VeigaNiiranenStenberg2007, HuShi2009}, which was then extended to the Kirchhoff plate bending problems with general boundary conditions in \cite{BeiraoNiiranenStenberg2010}.
The a posteriori error estimates for the nonconforming rectangular finite element methods were developed in \cite{CarstensenGallistlHu2013}, in whose analysis the Helmholtz decomposition was replaced by an abstract error decomposition. As for the Ciarlet-Raviart mixed finite element method,
we refer to \cite{CharbonneauDossouPierre1997, Gudi2011} for the residual-based a posteriori error estimator and \cite{LiuQin2007} for the gradient recovery-based a posteriori error estimator.
In the existing works on the discontinuous Galerkin methods for the
fourth-order problems, the recovery technique and Helmholtz decomposition were the mainly two types of techniques for deriving the residual-based a posteriori error estimates. The recovery technique was used to obtain the a posteriori error estimates for the $C^0$ interior penalty method in \cite{BrennerGudiSung2010}, the weakly over-penalized symmetric interior penalty method in \cite{BrennerGudiSung2010a}, the interior penalty discontinuous Galerkin (IPDG) method in \cite{GeorgoulisHoustonVirtanen2011} and the reduced local $C^0$ discontinuous Galerkin method in \cite{HuangHuang2014}.
By the ideas in \cite{Beirao-da-VeigaNiiranenStenberg2007}, the Helmholtz decomposition was also applied for the a posteriori error analysis for
the local $C^0$ discontinuous Galerkin method in \cite{XuHuangHuang2014} and the $C^0$ interior penalty method in \cite{HansboLarson2011a}.

However there are very few results involving the convergence analysis of adaptive algorithms for the fourth order problems.
Following the paradigm in \cite{CasconKreuzerNochettoSiebert2008},
the convergence and optimality of the adaptive Morley element method were analyzed in \cite{HuShiXu2012, CarstensenGallistlHu2014}.
The key points in \cite{HuShiXu2012} were the local conservative property of the Morley element method observed to prove the quasi-orthogonality and the intergrid transfer operators between two nonconforming spaces used to build the discrete reliability.
Applying the Helmholtz decomposition in \cite{Beirao-da-VeigaNiiranenStenberg2007} again, a reliable and efficient a posteriori error estimator was constructed for the Hellan-Herrmann-Johnson (HHJ) method, based on which a adaptive mixed finite element method with any polynomial degree was studied systematically in \cite{HuangHuangXu2011, XuHuang2012}. The discrete Helmholtz decomposition and discrete inf-sup condition were the crucial tools established in \cite{HuangHuangXu2011}
 for deriving the quasi-orthogonality of the moment field and the discrete reliability of the estimator.
To the best of our knowledge, there are no results on the convergence of the adaptive hybridizable discontinuous Galerkin method for the
fourth order problems in the literature.

On the other hand, the convergence of the adaptive IPDG method for the
second order problems was first analyzed in \cite{KarakashianPascal2007} under the interior node property.
Then in \cite{HoppeKanschatWarburton2008}, the requirement on the interior node property in the refinement was removed.
Under the same assumption in \cite{KarakashianPascal2007, HoppeKanschatWarburton2008} that the penalty parameter should be sufficiently large, the quasi-optimal asymptotic rate of convergence for the adaptive IPDG method on nonconforming meshing was obtained in \cite{BonitoNochetto2010}.
With the aid of a postprocessed solution,  the contraction property for the weakly penalized adaptive discontinuous Galerkin methods was derived in \cite{GudiGuzman2014} only assuming that the penalty parameter was large enough to guarantee the stability of the methods.
Recently, the contraction property was established in \cite{CockburnNochettoZhang2016} for the adaptive HDG method of the Poisson problem when the product of the stabilization parameter and the meshsize of the initial triangulation was sufficiently small. The original technique in their analysis was the lifting of trace residuals from inter-element boundaries to element interiors,
which was used to compare the inter-element flux jump residuals between two nested meshes.

In this paper, we present the convergence and optimality of an adaptive HCDG method for Kirchhoff plates.
The adaptive HCDG method is based on the standard successive loop
\[
\textrm{SOLVE}\to\textrm{ESTIMATE}\to\textrm{MARK}\to\textrm{REFINE}.
\]
The HCDG method in \cite{HuangHuang2016}, the D$\mathrm{\ddot{o}}$rfler marking strategy in \cite{Dorfler1996} and the newest vertex bisection in \cite{BinevDahmenDeVore2004, Maubach1995, Stevenson2008, Traxler1997} are employed in SOLVE, MARK and REFINE respectively.
The analysis in this paper mainly follows the ideas in \cite{HuangHuangXu2011} and \cite{CockburnNochettoZhang2016, Zhang2012}.
It's worth to mention that the exact solution $\boldsymbol{\sigma}$ was required to be piecewise $\boldsymbol{H}^1$ in \cite{HuangHuangXu2011, XuHuang2012}.
Here we only assume the minimal regularity $\boldsymbol{\sigma}\in\boldsymbol{L}^2(\Omega, \mathbb{S})$.

Since the bending moment of the HCDG method solution is entirely discontinuous, we first construct a normal-normal continuous bending moment by  postprocessing the HCDG method solution, which is the pivot in the analysis.
The difference between the postprocessed bending moment error and the original one is characterized by the stability of the postprocessing error with respect to the mesh, which was also used for adaptive HDG method in \cite{CockburnNochettoZhang2016}.
As demonstrated in \cite{CockburnNochettoZhang2016},  another crucial feature to analyze the adaptive HCDG method is a lifting of trace residuals from inter-element boundaries to
element interiors, which makes the comparison of jump residuals of the inter-element bending moment
between two successive meshes possible.
A reliable and efficient a posteriori error estimator is the start point for the adaptive algorithm.
Taking advantage of the Helmholtz decomposition in \cite{Beirao-da-VeigaNiiranenStenberg2007}, the lifting estimates and a well-tailored interpolation operator used in the a prior analysis of the HHJ method (cf. \cite{BabuvskaOsbornPitkaranta1980,FalkOsborn1980, Comodi1989,Stenberg1991}), we construct a reliable residual-based a posteriori error estimator for the HCDG method. The efficiency of the estimator is proved by the standard technique of bubble functions, which has been shown in \cite{HuangHuangXu2011}. It is worth to mention that our new estimator differs from the one proposed in \cite{HuangHuangXu2011} even if the stabilization parameter vanishes, since the well-tailored interpolation operator used in the proof. The next important ingredient is to create the quasi-orthogonality of the bending moment. To this end, we prove the quasi-orthogonality for the postprocessed bending moment by using the discrete Helmholtz decomposition in \cite{HuangHuangXu2011}, which together with the stability of the postprocessing error with respect to the mesh gives the required quasi-orthogonality. Then we show that the adaptive HCDG method is a contraction for the sum of the bending moment error in an energy norm and the scaled error estimator between two consecutive meshes when the product of the stabilization parameter and the meshsize of the initial triangulation was sufficiently small.

Another key ingredient for the complexity of the adaptive HCDG method is the discrete reliability of the error estimator, which is obtained by using the discrete Helmholtz decomposition and the stability of the postprocessing error with respect to the mesh when the product of the stabilization parameter and the meshsize of the initial triangulation was sufficiently small. With two connection operators corresponding to the deflection and the bending moment respectively, we proved the quasi-optimality of the total error under the minimal regularity. Here the total error is defined as the sum of the bending moment error in an energy norm, the data oscillation and the jump residuals of the
inter-element bending moment. Then we define a nonlinear approximation class based on the total error. With previous preparations, we exhibit that
the adaptive HCDG method generates a decay rate of the total error in terms of the
number of degrees of freedom.

The rest of this paper is organized as follows. We review a HCDG method for Kirchhoff plates in Section 2. In Section 3, a reliable
and efficient a posteriori error estimator is constructed for the adaptive HCDG method. We achieve the quasi-orthogonality of the bending moment in Section 4. We consider the convergence of the adaptive HCDG method in Section 5. And the complexity of the adaptive HCDG method is analyzed in Section 6.

\section{The HCDG method for Kirchhoff plates}
\label{section2}

Given a thin plate occupying a bounded polygonal domain $\Omega\subset\mathbb{R}^{2}$, assume it is clamped on the boundary and acted under a vertical load $f\in L^{2}(\Omega)$. Then the mathematical model describing the deflection $u$ of the plate is governed by (cf.\ \cite{FengShi1996,Reddy2006})
\begin{equation}
\left\{
\begin{aligned}
&\mathcal{C}\boldsymbol{\sigma}=\mathcal{K}(u)\;\;\text{in }\Omega,\\
&\boldsymbol{\nabla}\cdot(\boldsymbol{\nabla}\cdot\boldsymbol{\sigma})=-f\;\;\text{in
}\Omega, \\
&u=\partial_{\boldsymbol{n}}u=0\;\;\text{on }\partial\Omega,
\end{aligned}
\right.  \label{problem1}%
\end{equation}
where $\boldsymbol{n}$ is the unit
outward normal to $\partial\Omega$, $\boldsymbol{\nabla}$ is the usual gradient
operator, $\boldsymbol{\nabla}\cdot$ stands for the divergence operator acting
on tensor-valued or vector-valued functions (cf.\ \cite{Reddy2006}), and
\[
\mathcal{K}(u) :=\left(\mathcal{K}_{ij}(u)\right)_{2\times2},\ \
\mathcal{K}_{ij}(u):=-\partial_{ij}u,\ 1\leq i, j\leq2.
\]
Here, $\mathcal{C}$ is a symmetric and positive definite operator defined as follows: for any second-order tensor $\boldsymbol{\tau}$,
\[
\mathcal{C}\boldsymbol{\tau}:=\frac{1}{1-\nu}\boldsymbol{\tau}-\frac{\nu}{1-\nu^2}({\rm
tr}\boldsymbol{\tau})
   \mathcal{I}
\]
with $\mathcal{I}$ a second order identity tensor, ${\rm tr}$ the trace operator
acting on second order tensors, and $\nu\in L^{\infty}(\Omega)$ the Poisson ratio satisfying $\inf\limits_{x\in\Omega}\nu>0$ and $\sup\limits_{x\in\Omega}\nu<0.5$. We assume in this paper that $\nu$ is piecewise constant corresponding to the initial triangulation.
It is easy to see that
\[
\mathcal{C}^{-1}\boldsymbol{\tau}=(1-\nu)\boldsymbol{\tau}+\nu
\textrm{tr}(\boldsymbol{\tau})\mathcal{I}.
\]

\subsection{Notation}

Denote by $\mathbb{S}$ the space of all symmetric $2\times2$ tensors (matrices).  Given a bounded domain $G\subset\mathbb{R}^{2}$ and a
non-negative integer $r$, let $H^r(G)$ be the usual Sobolev space of functions
on $G$, and $\boldsymbol{H}^r(G, \mathbb{X})$ be the usual Sobolev space of functions taking values in the finite-dimensional vector space $\mathbb{X}$ for $\mathbb{X}$ being $\mathbb{S}$ or $\mathbb{R}^2$. The corresponding norm and semi-norm are denoted respectively by
$\Vert\cdot\Vert_{r,G}$ and $|\cdot|_{r,G}$.  If $G$ is $\Omega$, we abbreviate
them by $\Vert\cdot\Vert_{r}$ and $|\cdot|_{r}$,
respectively. Let $H_{0}^{r}(G)$ be the closure of $C_{0}^{\infty}(G)$ with
respect to the norm $\Vert\cdot\Vert_{r,G}$.  $P_r(G)$ stands for the set of all
polynomials in $G$ with the total degree no more than $r$, and $\boldsymbol{P}_r(G, \mathbb{X})$ denotes the tensor or vector version of $P_r(G)$ for $\mathbb{X}$ being $\mathbb{S}$ or $\mathbb{R}^2$, respectively.

Let $\mathcal{T}_0$ be an initial shape-regular and conforming triangulation of $\Omega$. Denote by $\mathcal{T}$ any refinement of $\mathcal{T}_0$ which is also shape-regular and conforming.
For each $K\in\mathcal{T}$, define $h_K:=\sqrt{|K|}$ where $|K|$ means the area of $K$. Denote by $\boldsymbol{n}_K = (n_1, n_2)^T$ the
unit outward normal to $\partial K$ and write $\boldsymbol{t}_K:= (t_1, t_2)^T = (-n_2, n_1)^T$, a unit vector
tangent to $\partial K$. Without causing any confusion, we will abbreviate $\boldsymbol{n}_K$ and $\boldsymbol{t}_K$ as $\boldsymbol{n}$ and $\boldsymbol{t}$ respectively for simplicity.
Let $\mathcal{E}(\mathcal{T})$ be the union of all edges
of the triangulation $\mathcal{T}$ and $\mathcal{E}^i(\mathcal{T})$ the union of all
interior edges of the triangulation $\mathcal{T}$. For any $e\in\mathcal{E}(\mathcal{T})$,
denote by $h_e$ its length and fix a unit normal vector $\boldsymbol{n}_e:= (n_1, n_2)^T$ and a unit tangent vector $\boldsymbol{t}_e := (-n_2, n_1)^T$.
For a second order tensor-valued function $\boldsymbol{\tau}$, set
\[
M_{n}(\boldsymbol{\tau}):=\boldsymbol{n}_e^T\boldsymbol{\tau}\boldsymbol{n}_e, \quad M_{nt}(\boldsymbol{\tau}):=\boldsymbol{t}_e^T\boldsymbol{\tau}\boldsymbol{n}_e,
\]
\[
M_{nn_e}(\boldsymbol{\tau}):=\boldsymbol{n}_e^T\boldsymbol{\tau}\boldsymbol{n}, \quad M_{nt_e}(\boldsymbol{\tau}):=\boldsymbol{t}_e^T\boldsymbol{\tau}\boldsymbol{n}
\]
on each edge $e\in\mathcal{E}(\mathcal{T})$.
In the context of solid mechanics, $M_{n}(\boldsymbol{\tau})$ and $M_{nt}(\boldsymbol{\tau})$ are called normal bending moment and twisting moment respectively when $\boldsymbol{\tau}$ is a moment.
For any $G\subset\Omega$, let $\mathcal{O}_{\mathcal{T}}(G):=\{K\in\mathcal{T}: \overline{K}\cap\overline{G}\neq\emptyset\}$ and $\mathcal{T}(G)$ be the restriction of $\mathcal{T}$ on $G$.
Throughout this paper, we also use
``$\lesssim\cdots $" to mean that ``$\leq C\cdots$", where
$C$ is a generic positive constant independent of the mesh size,
which may take different values at different appearances. And $A\eqsim B$ means $A\lesssim B$ and $B\lesssim A$.

For later uses, we introduce averages and jumps on edges as in \cite{HuangHuangHan2010}. Consider two adjacent triangles $K^+$ and $K^-$ sharing an interior edge $e$.
Denote by $\boldsymbol{n}^+$ and $\boldsymbol{n}^-$ the unit outward normals
to the common edge $e$ of the triangles $K^+$ and $K^-$, respectively.
For a scalar-valued function $v$, write $v^+:=v|_{K^+}$ and $v^-
:=v|_{K^-}$.   Then define averages and jumps on $e$ as
follows:
\[
\{v\}:=\frac{1}{2}(v^++v^-),
  \quad  [v]:=v^+\boldsymbol{n}_e\cdot\boldsymbol{n}^++v^-\boldsymbol{n}_e\cdot\boldsymbol{n}^-.
\]
On an edge $e$ lying on the boundary $\partial\Omega$, the above terms are
defined by
\[
\{v\}:=v, \quad [v]
   :=v\boldsymbol{n}_e\cdot\boldsymbol{n}.
\]
For any second order tensor field
$\boldsymbol{\tau}$ and vector field
$\boldsymbol{\phi}$, define differential operators
$$
 \mathbf{rot}\boldsymbol{\tau}:=\left(\begin{array}{c}\partial_{1}\mathbf{\tau}_{12}-\partial_{2}\mathbf{\tau}_{11}\\
                                   \partial_{1}\mathbf{\tau}_{22}
                                   -\partial_{2}\mathbf{\tau}_{21}\end{array}\right), \quad \mathbf{Curl}\boldsymbol{\phi}:=\left(\begin{array}{cc}-\partial_{2} \phi_{1} & \partial_{1}\phi_{1}\\
                                   -\partial_{2}\phi_{2} &
                                   \partial_{1}\phi_{2}\end{array}\right),
$$
$$
 \boldsymbol{\varepsilon}^{\bot} (\boldsymbol \phi) :=
\frac{1}{2}( \mathbf{Curl} \boldsymbol \phi +  (\mathbf{Curl}\boldsymbol \phi)^T)   .
$$

\subsection{The HCDG method}

In this subsection, we will present a hybridizable $C^0$ discontinuous Galerkin method for problem \eqref{problem1}.
To this end,
define three finite element spaces based on the triangulation $\mathcal {T}$ as
\begin{align*}
&\boldsymbol{\Sigma}_{\mathcal{T}}:=\left\{\boldsymbol{\tau}\in\boldsymbol{L}^2(\Omega, \mathbb{S}):
\boldsymbol{\tau}|_K\in \boldsymbol{P}_{k-1}(K, \mathbb{S}) \quad \forall\,K\in\mathcal{T} \right\},\\
&V_{\mathcal{T}}:=\left\{v\in H^1_0(\Omega): v|_K\in P_k(K)\quad \forall\,K\in\mathcal
{T}\right\}, \\
& M_{\mathcal{T}}:=\left\{\mu\in L^2(\mathcal{E}({\mathcal{T}})): \mu|_e\in P_{k-1}(e)\; \forall\,e\in\mathcal
{E}({\mathcal{T}}) \textrm{ and } \mu|_{\partial\Omega}=0\right\},
\end{align*}
with integer $k\geq1$.
We also need the following two more finite element spaces which will be used in the analysis
\[
\boldsymbol{W}_{\mathcal{T}}:=\left\{\boldsymbol{v}\in \boldsymbol{H}^1(\Omega,\mathbb{R}^2): \boldsymbol{v}|_K\in \boldsymbol{P}_k(K,\mathbb{R}^2)\quad \forall\,K\in\mathcal
{T}\right\},
\]
\[
\boldsymbol{\Sigma}_{\mathcal{T}}^{\textsf{HHJ}}:=\left\{\boldsymbol{\tau}\in \boldsymbol{\Sigma}_{\mathcal{T}}:  [M_n(\boldsymbol{\tau})]|_e=0\;\, \forall\,e\in\mathcal
{E}^i(\mathcal{T})\right\}.
\]

Then the hybridizable $C^0$ discontinuous Galerkin (HCDG) method for problem \eqref{problem1} designed in \cite{HuangHuang2016} is defined as follows:
Find $(\boldsymbol{\sigma}_{\mathcal{T}},u_{\mathcal{T}}, \lambda_{\mathcal{T}})\in \boldsymbol{\Sigma}_{\mathcal{T}}\times V_{\mathcal{T}}\times M_{\mathcal{T}}$ such that
\begin{subequations}
\begin{align}
&a(\boldsymbol{\sigma}_{\mathcal{T}}, \boldsymbol{\tau}) + b_{\mathcal{T}}(\boldsymbol{\tau}, u_{\mathcal{T}})=-\sum_{K\in\mathcal{T}}\int_{\partial
K}M_{nn_e}(\boldsymbol{\tau})\lambda_{\mathcal{T}} ds, \label{hcdg1}
\\
&-b_{\mathcal{T}}(\boldsymbol{\sigma}_{\mathcal{T}}, v)+ \sum_{K\in\mathcal{T}}\int_{\partial K}\left(\widehat{M_n(\boldsymbol{\sigma}_{\mathcal{T}})}-M_n(\boldsymbol{\sigma}_{\mathcal{T}})\right)\partial_{\boldsymbol{n}}
vds=\int_{\Omega}fvdx, \label{hcdg2}\\
&\sum_{e\in\mathcal{E}^i(\mathcal{T})}\int_e\left[\widehat{M_n(\boldsymbol{\sigma}_{\mathcal{T}})}\right]\mu ds=0, \label{hcdg3}\\
& \widehat{M_n(\boldsymbol{\sigma}_{\mathcal{T}})}|_{\partial K}=M_n(\boldsymbol{\sigma}_{\mathcal{T}})
+\xi(\partial_{\boldsymbol{n}_e}u_{\mathcal{T}}-\lambda_{\mathcal{T}})\boldsymbol{n}_e\cdot\boldsymbol{n}\quad \forall~K\in\mathcal{T} \label{hcdg4}
\end{align}
for all $(\boldsymbol{\tau},v,\mu)\in \boldsymbol{\Sigma}_{\mathcal{T}}\times V_{\mathcal{T}}\times M_{\mathcal{T}}$, where
\begin{align*}
a(\boldsymbol{\sigma}, \boldsymbol{\tau}):=&\int_{\Omega}\mathcal{C}\boldsymbol{\sigma}:\boldsymbol{\tau}dx,
\\
b_{\mathcal{T}}(\boldsymbol{\tau}, v):=&- \sum_{K\in\mathcal{T}}\int_{K}(\boldsymbol{\nabla}\cdot\boldsymbol{\tau})\cdot\boldsymbol{\nabla}
v dx + \sum_{K\in\mathcal{T}}\int_{\partial K}M_{nt}(\boldsymbol{\tau})\partial_{\boldsymbol{t}}
v ds,
\\
\xi|_K:=&\xi_K=C_0h_K^{\gamma}\quad \forall~K\in\mathcal{T}
\end{align*}
with constants $C_0\geq 0$ and $\gamma>-1$.
\end{subequations}
$\xi$ is called the stabilization parameter.
It has been shown in \cite{HuangHuang2016} that the HCDG method~\eqref{hcdg1}-\eqref{hcdg4} possesses superconvergence when $\gamma\geq1$ or $C_0=0$.

Let $C_{\xi}:=\max\limits_{K\in\mathcal{T}_0}{h_K\xi_K}=\max\limits_{K\in\mathcal{T}_0}C_0h_K^{1+\gamma}$. For any $\mathcal{S}\subset\mathcal{T}$ and $v\in V_{\mathcal{T}}$, define a mesh-dependent norm by
\[
\|v\|_{2,\mathcal{S}}^2:=\sum_{K\in\mathcal{S}}|v|_{2,K}^2+\sum_{K\in\mathcal{S}}h_K^{-1}\|[\partial_{\boldsymbol{n}_e}v]\|_{0,\partial K}^2.
\]
The bilinear form $b_{\mathcal{T}}(\cdot, \cdot)$ possesses the following inf-sup condition (cf. \cite[Lemma~4.2]{HuangHuangXu2011})
\begin{equation}\label{infsup}
\|v\|_{2,\mathcal{T}}\lesssim\sup_{\boldsymbol{\tau}\in\boldsymbol{\Sigma}_{\mathcal{T}}^{\textsf{HHJ}}}\frac{b_{\mathcal{T}}(\boldsymbol{\tau},v)}{\|\boldsymbol{\tau}\|_{0,\mathcal{T}}} \quad \forall~v\in V_{\mathcal{T}},
\end{equation}
where
\[
\|\boldsymbol{\tau}\|_{0,\mathcal{T}}^2:=\|\boldsymbol{\tau}\|_{0}^2+\sum_{K\in\mathcal{T}}h_K\|M_n(\boldsymbol{\tau})\|_{0,\partial K}^2 \quad \forall~\boldsymbol{\tau}\in\boldsymbol{\Sigma}_{\mathcal{T}}.
\]
Thanks to the exact sequence of the HHJ method (cf. \cite{ChenHuHuang2015a}), we have
\begin{equation}\label{eq:kerBh}
\boldsymbol{\varepsilon}^{\perp}(\boldsymbol{W}_{\mathcal{T}})\subset\boldsymbol{\Sigma}_{\mathcal{T}}^{\textsf{HHJ}} \;\;\textrm{ and }\;\; b_{\mathcal{T}}(\boldsymbol{\varepsilon}^{\perp}(\boldsymbol{\varphi}), v)=0 \quad \forall~\boldsymbol{\varphi}\in\boldsymbol{W}_{\mathcal{T}}, v\in V_{\mathcal{T}}.
\end{equation}

\section{A posteriori error estimates}

In this section, reliable and efficient error estimators of the bending moment will be constructed for designing adaptive algorithm.
With the help of an interpolation operator associated with the HHJ method and a postprocessed discrete bending moment, we establish
the reliability of the error estimators adopting the techniques used in \cite[Lemma 3.1]{HuangHuangXu2011} and \cite{CockburnNochettoZhang2016}, i.e. the Helmholtz decomposition for second order tensors and Lemma~\ref{lem:traceresiduallifting}.
The efficiency of the error estimators will be proved by the technique of bubble functions (cf. \cite{Verfurth1996}).

\subsection{Preliminaries}

Hereafter, let $\mathcal{T}^{\ast}$ be a shape-regular and conforming refinement of $\mathcal{T}$.
Define $I_{\mathcal{T}}: H_0^2(\Omega)\cup V_{\mathcal{T}^{\ast}}\to V_{\mathcal{T}}$ in the following way: given $w\in H_0^2(\Omega)\cup V_{\mathcal{T}^{\ast}}$,
\[
I_{\mathcal{T}}w(a)=w(a) \textrm{ for each vertex } a \textrm{ of } \mathcal{T},
\]
\[
\int_e(w-I_{\mathcal{T}}w)\mu ds=0 \quad \forall~\mu\in P_{k-2}(e) \textrm{ for each edge } e\in\mathcal{E}(\mathcal{T}),
\]
\[
\int_K(w-I_{\mathcal{T}}w)vdx=0 \quad \forall~v\in P_{k-3}(K) \textrm{ for each triangle } K\in\mathcal{T}.
\]
According to the definition of $I_{\mathcal{T}}$ and integration by parts, it holds for any $\boldsymbol{\tau}\in \boldsymbol{\Sigma}_{\mathcal{T}}$ and $v\in H_0^2(\Omega)\cup V_{\mathcal{T}^{\ast}}$ (cf. \cite{Comodi1989, BabuvskaOsbornPitkaranta1980}),
\begin{equation}\label{eq:bp0}
b_{\mathcal{T}}(\boldsymbol{\tau}, v-I_{\mathcal{T}}v)=0.
\end{equation}
The following error estimate for the interpolation operator $I_{\mathcal{T}}$ can be found in \cite{BabuvskaOsbornPitkaranta1980,FalkOsborn1980, Comodi1989,Stenberg1991}.
For any $K\in\mathcal{T}$, it holds
\begin{equation}\label{eq:interpolationImestimate1}
\|v-I_{\mathcal{T}}v\|_{0,K}+h_K^{3/2}\left\|\boldsymbol{\nabla}(v-I_{\mathcal{T}}v)\right\|_{0,\partial K} \lesssim
h_K^2\|v\|_{2,K} \quad \forall~v\in H_0^2(\Omega).
\end{equation}
Adopting the similar argument as in Lemma~4.3 of \cite{HuangHuangXu2011}, we also have for any $v\in V_{\mathcal{T}^{\ast}}$
\begin{equation}\label{eq:interpolationImestimate2}
\|v-I_{\mathcal{T}}v\|_{0,K} \lesssim h_K^2\|v\|_{2,\mathcal{O}_{\mathcal{T}^{\ast}}(K)} \quad\forall~K\in\mathcal{T}\backslash \mathcal{T}^{\ast}.
\end{equation}

With the solution $\boldsymbol{\sigma}_{\mathcal{T}}$ of the HCDG method~\eqref{hcdg1}-\eqref{hcdg4}, we define a post-processed  bending moment $\widehat{\boldsymbol{\sigma}}_{\mathcal{T}}\in\boldsymbol{\Sigma}_{\mathcal{T}}$ as follows: for any $K\in\mathcal{T}$,
\[
\int_K \widehat{\boldsymbol{\sigma}}_{\mathcal{T}}:\boldsymbol{\tau}dx=\int_K\boldsymbol{\sigma}_{\mathcal{T}}:\boldsymbol{\tau}dx \quad \forall~\boldsymbol{\tau}\in \boldsymbol{P}_{k-2}(K, \mathbb{S}),
\]
\[
\int_eM_n(\widehat{\boldsymbol{\sigma}}_{\mathcal{T}})\mu ds=\int_e\widehat{M_n(\boldsymbol{\sigma}_{\mathcal{T}})}\mu ds \quad \forall~\mu\in P_{k-1}(e) \textrm{ for each edge } e \textrm{ of } K.
\]
Due to \eqref{hcdg3}, we have $\widehat{\boldsymbol{\sigma}}_{\mathcal{T}}\in\boldsymbol{\Sigma}_{\mathcal{T}}^{\textsf{HHJ}}$. It holds by using scaling argument
\begin{equation}\label{eq:postprocessprop1}
\|\widehat{\boldsymbol{\sigma}}_{\mathcal{T}}-\boldsymbol{\sigma}_{\mathcal{T}}\|_{0,K}^2\eqsim h_K\|M_n(\widehat{\boldsymbol{\sigma}}_{\mathcal{T}})-M_n(\boldsymbol{\sigma}_{\mathcal{T}})\|_{0,\partial K}^2 \quad \forall~K\in\mathcal{T}.
\end{equation}
By the definition of $\widehat{\boldsymbol{\sigma}}_{\mathcal{T}}$ and integration by parts, we obtain
\begin{equation}\label{eq:postprocessprop2}
b_{\mathcal{T}}(\widehat{\boldsymbol{\sigma}}_{\mathcal{T}}-\boldsymbol{\sigma}_{\mathcal{T}}, v)=\sum_{K\in\mathcal{T}}\int_{\partial K}M_n(\boldsymbol{\sigma}_{\mathcal{T}}-\widehat{\boldsymbol{\sigma}}_{\mathcal{T}})\partial_{\boldsymbol{n}}
vds \quad \forall~v\in V_{\mathcal{T}}.
\end{equation}
Thus \eqref{hcdg2} can be rewritten as
\begin{equation}\label{hcdg2p}
-b_{\mathcal{T}}(\widehat{\boldsymbol{\sigma}}_{\mathcal{T}}, v)=\int_{\Omega}fvdx \quad \forall~v\in V_{\mathcal{T}}.
\end{equation}

Employing integration by parts, we get from \eqref{problem1}
\begin{equation}\label{eq:continuousvar1}
a(\boldsymbol{\sigma}, \boldsymbol{\tau}) + b_{\mathcal{T}}(\boldsymbol{\tau}, u)=-\sum_{K\in\mathcal{T}}\int_{\partial
K}M_{n}(\boldsymbol{\tau})\partial_{\boldsymbol{n}}u ds \quad \forall~\boldsymbol{\tau}\in\boldsymbol{\Sigma}_{\mathcal{T}},
\end{equation}
\begin{equation}\label{eq:continuousvar2}
\int_{\Omega}\boldsymbol{\sigma}:\mathcal{K}(v)dx=\int_{\Omega}fvdx\quad \forall~v\in H_0^2(\Omega).
\end{equation}
Subtracting \eqref{hcdg1} from \eqref{eq:continuousvar1} and using \eqref{eq:bp0}, we obtain the following error equation that for any $\boldsymbol{\tau}\in\boldsymbol{\Sigma}_{\mathcal{T}}$,
\begin{equation}\label{eq:errorequation}
a(\boldsymbol{\sigma}-\boldsymbol{\sigma}_{\mathcal{T}}, \boldsymbol{\tau}) + b_{\mathcal{T}}(\boldsymbol{\tau}, I_{\mathcal{T}}u-u_{\mathcal{T}})=\sum_{K\in\mathcal{T}}\int_{\partial
K}M_{nn_e}(\boldsymbol{\tau})(\lambda_{\mathcal{T}}-\partial_{\boldsymbol{n}_e}u) ds.
\end{equation}

\subsection{Error estimators}

For any $K\in\mathcal{T}$ and integer $r\geq 0$, denote by $Q_K^r$ the $L^2$-orthogonal projection from $L^2(K)$ onto $P_r(K)$, and $\boldsymbol{Q}_K^r$ means the tensor version of $Q_K^r$ .
Let $Q_K^{-2}=Q_K^{-1}=0$ and $\boldsymbol{Q}_K^{-1}=\boldsymbol{0}$.
For any $\mathcal{S}\subset\mathcal{T}$ and $\boldsymbol{\tau}\in \boldsymbol{\Sigma}_{\mathcal{T}}$, define
\[
\mathrm{osc}^2(f, \mathcal{S}):=\sum_{K\in\mathcal{S}}h_K^4\|f-Q_K^{k-3}f\|_{0,K}^2,
\]
\[
\widetilde{\eta}_{1}^2(\boldsymbol{\tau}, \mathcal{S}):=\sum_{K\in\mathcal{S}}h_K\xi_K\|\mathcal{C}\boldsymbol{\tau} - \boldsymbol{Q}_K^{k-2}(\mathcal{C}\boldsymbol{\tau})\|_{0,K}^2,
\]
\[
\eta_{1}^2(\boldsymbol{\tau}, f, \mathcal{S}):=\mathrm{osc}^2(f, \mathcal{S})+\widetilde{\eta}_{1}^2(\boldsymbol{\tau}, \mathcal{S}),
\]
\[
\eta_{2}^2(\boldsymbol{\tau}, \mathcal{S}):=\sum_{K\in\mathcal{S}}\left(h_K^2\|\mathbf{rot}(\mathcal{C}\boldsymbol{\tau})\|_{0,K}^{2} + h_K\|[(\mathcal{C}\boldsymbol{\tau})\boldsymbol{t}_{e}]\|_{0,\partial K}^{2}\right),
\]
\begin{equation}\label{eq:errorestimator}
\eta^2(\boldsymbol{\tau}, f, \mathcal{S}):=\eta_{1}^2(\boldsymbol{\tau}, f, \mathcal{S})+\eta_{2}^2(\boldsymbol{\tau}, \mathcal{S}).
\end{equation}

To derive the reliability of the error estimator, we need the following lifting of the trace residuals from inter-element boundaries to element interiors,
which will be also used in the proofs of the stability of the postprocessing error with respect to the mesh and the quasi-optimality of the total error.
\begin{lemma}\label{lem:traceresiduallifting}
For any $K\in\mathcal{T}$, we have
\begin{equation}\label{eq:Mn1}
\|\partial_{\boldsymbol{n}_e}u_{\mathcal{T}}-\lambda_{\mathcal{T}}\|_{0, \partial K}^2\eqsim h_K\|\mathcal{C}\boldsymbol{\sigma}_{\mathcal{T}} - \boldsymbol{Q}_K^{k-2}(\mathcal{C}\boldsymbol{\sigma}_{\mathcal{T}})\|_{0,K}^2,
\end{equation}
\begin{equation}\label{eq:Mn2}
\|M_n(\widehat{\boldsymbol{\sigma}}_{\mathcal{T}})-M_n(\boldsymbol{\sigma}_{\mathcal{T}})\|_{0,\partial K}^2\eqsim h_K\xi_K^2\|\mathcal{C}\boldsymbol{\sigma}_{\mathcal{T}} - \boldsymbol{Q}_K^{k-2}(\mathcal{C}\boldsymbol{\sigma}_{\mathcal{T}})\|_{0,K}^2,
\end{equation}
\begin{equation}\label{eq:Mn3}
\|\widehat{\boldsymbol{\sigma}}_{\mathcal{T}}-\boldsymbol{\sigma}_{\mathcal{T}}\|_{0,K}\eqsim  h_K\xi_K\|\mathcal{C}\boldsymbol{\sigma}_{\mathcal{T}} - \boldsymbol{Q}_K^{k-2}(\mathcal{C}\boldsymbol{\sigma}_{\mathcal{T}})\|_{0,K}.
\end{equation}
\end{lemma}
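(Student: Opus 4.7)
The plan is to prove the equivalence \eqref{eq:Mn1} first; the estimates \eqref{eq:Mn2} and \eqref{eq:Mn3} will then follow from a short algebraic calculation and \eqref{eq:postprocessprop1}. My starting point is to test \eqref{hcdg1} with an arbitrary $\boldsymbol{\tau}\in\boldsymbol{\Sigma}_{\mathcal{T}}$ supported in a single element $K$ and integrate $b_{\mathcal{T}}(\boldsymbol{\tau},u_{\mathcal{T}})$ by parts element-wise. Using the sign relations $M_{nn_e}(\boldsymbol{\tau})=(\boldsymbol{n}\cdot\boldsymbol{n}_e)M_n(\boldsymbol{\tau})$, $M_{nt_e}(\boldsymbol{\tau})=(\boldsymbol{n}\cdot\boldsymbol{n}_e)M_{nt}(\boldsymbol{\tau})$, and $\partial_{\boldsymbol{t}}=(\boldsymbol{n}\cdot\boldsymbol{n}_e)\partial_{\boldsymbol{t}_e}$, the twisting-moment contributions cancel exactly and one obtains the element-wise identity
\[
\int_K\bigl[\mathcal{C}\boldsymbol{\sigma}_{\mathcal{T}}-\mathcal{K}(u_{\mathcal{T}})\bigr]:\boldsymbol{\tau}\,dx = \int_{\partial K}M_{nn_e}(\boldsymbol{\tau})\bigl(\partial_{\boldsymbol{n}_e}u_{\mathcal{T}}-\lambda_{\mathcal{T}}\bigr)\,ds \qquad \forall\,\boldsymbol{\tau}\in\boldsymbol{P}_{k-1}(K,\mathbb{S}).
\]
Because $\mathcal{K}(u_{\mathcal{T}})|_K\in\boldsymbol{P}_{k-2}(K,\mathbb{S})$, the volume term effectively only probes $r_K:=\mathcal{C}\boldsymbol{\sigma}_{\mathcal{T}}-\boldsymbol{Q}_K^{k-2}(\mathcal{C}\boldsymbol{\sigma}_{\mathcal{T}})$, the quantity appearing on the right of \eqref{eq:Mn1}.

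For the direction $h_K\|r_K\|_{0,K}^2\lesssim \|\partial_{\boldsymbol{n}_e}u_{\mathcal{T}}-\lambda_{\mathcal{T}}\|_{0,\partial K}^2$, I test the identity with $\boldsymbol{\tau}=r_K$. Since $\boldsymbol{Q}_K^{k-2}(\mathcal{C}\boldsymbol{\sigma}_{\mathcal{T}})-\mathcal{K}(u_{\mathcal{T}})\in\boldsymbol{P}_{k-2}(K,\mathbb{S})$ is $L^2(K)$-orthogonal to $r_K$, the left-hand side collapses to $\|r_K\|_{0,K}^2$. The right-hand side is controlled by Cauchy--Schwarz together with the scaled discrete trace inequality $\|M_{nn_e}(r_K)\|_{0,\partial K}\lesssim h_K^{-1/2}\|r_K\|_{0,K}$, from which the bound follows after dividing by $\|r_K\|_{0,K}$.

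For the reverse inequality, I construct a tensor $\boldsymbol{\tau}_K\in\boldsymbol{P}_{k-1}(K,\mathbb{S})$ whose edge traces satisfy $M_n(\boldsymbol{\tau}_K)|_e=(\boldsymbol{n}\cdot\boldsymbol{n}_e)(\partial_{\boldsymbol{n}_e}u_{\mathcal{T}}-\lambda_{\mathcal{T}})|_e$ on each of the three edges of $K$ and whose interior projection $\boldsymbol{Q}_K^{k-2}(\boldsymbol{\tau}_K)$ vanishes. Existence, uniqueness, and the scaling bound $\|\boldsymbol{\tau}_K\|_{0,K}^2\lesssim h_K\|\partial_{\boldsymbol{n}_e}u_{\mathcal{T}}-\lambda_{\mathcal{T}}\|_{0,\partial K}^2$ follow from the unisolvence of the HHJ-type local degrees of freedom on $\boldsymbol{P}_{k-1}(K,\mathbb{S})$ (the dimension count $3k+\dim\boldsymbol{P}_{k-2}(K,\mathbb{S})=\tfrac{3k(k+1)}{2}=\dim\boldsymbol{P}_{k-1}(K,\mathbb{S})$ corroborates this) together with a standard affine scaling from a reference triangle. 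Substituting this $\boldsymbol{\tau}_K$ into the identity makes the boundary term equal to $\|\partial_{\boldsymbol{n}_e}u_{\mathcal{T}}-\lambda_{\mathcal{T}}\|_{0,\partial K}^2$ and annihilates the $\boldsymbol{P}_{k-2}$-portion of the volume term by orthogonality, so Cauchy--Schwarz on the remaining $r_K$-term combined with the scaling of $\boldsymbol{\tau}_K$ closes \eqref{eq:Mn1}.

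Finally, on each edge $M_n(\widehat{\boldsymbol{\sigma}}_{\mathcal{T}})|_e$ and $\widehat{M_n(\boldsymbol{\sigma}_{\mathcal{T}})}|_e$ both belong to $P_{k-1}(e)$ with identical $P_{k-1}(e)$-moments by the definition of $\widehat{\boldsymbol{\sigma}}_{\mathcal{T}}$, so they coincide pointwise; \eqref{hcdg4} then yields $M_n(\widehat{\boldsymbol{\sigma}}_{\mathcal{T}})-M_n(\boldsymbol{\sigma}_{\mathcal{T}})=\xi(\partial_{\boldsymbol{n}_e}u_{\mathcal{T}}-\lambda_{\mathcal{T}})(\boldsymbol{n}_e\cdot\boldsymbol{n})$ on $\partial K$. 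Taking $L^2(\partial K)$-norms (using $|\boldsymbol{n}_e\cdot\boldsymbol{n}|=1$) and invoking \eqref{eq:Mn1} gives \eqref{eq:Mn2}, and a one-line combination of \eqref{eq:Mn2} with \eqref{eq:postprocessprop1} yields \eqref{eq:Mn3}. The main obstacle I anticipate is the careful bookkeeping of edge orientations when deriving the element-wise identity and the clean statement of the unisolvence-based construction of $\boldsymbol{\tau}_K$; neither step requires machinery beyond the HHJ framework already relied upon in the paper.
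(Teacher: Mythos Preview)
Your proposal is correct and follows essentially the same approach as the paper's own proof: derive the element-wise identity from \eqref{hcdg1} by integration by parts, test with $r_K=\mathcal{C}\boldsymbol{\sigma}_{\mathcal{T}}-\boldsymbol{Q}_K^{k-2}(\mathcal{C}\boldsymbol{\sigma}_{\mathcal{T}})$ for one direction, construct the HHJ-type tensor with prescribed normal--normal edge moments and vanishing $\boldsymbol{P}_{k-2}$ interior moments for the other, and then read off \eqref{eq:Mn2}--\eqref{eq:Mn3} from \eqref{hcdg4} and \eqref{eq:postprocessprop1}. The only cosmetic difference is that you spell out the edge-orientation bookkeeping and the dimension count for the unisolvence argument, which the paper subsumes under ``scaling argument''.
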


\begin{proof}
Applying integration by parts to \eqref{hcdg1}, it holds
\begin{equation}\label{eq:temp1}
\int_{\partial
K}M_{nn_e}(\boldsymbol{\tau})(\partial_{\boldsymbol{n}_e}u_{\mathcal{T}}-\lambda_{\mathcal{T}}) ds=\int_K(\mathcal{C}\boldsymbol{\sigma}_{\mathcal{T}}+\boldsymbol{\nabla}^2u_{\mathcal{T}}):\boldsymbol{\tau}dx \quad \forall~\boldsymbol{\tau}\in \boldsymbol{P}_{k-1}(K, \mathbb{S}).
\end{equation}
First construct $\boldsymbol{\tau}\in \boldsymbol{P}_{k-1}(K, \mathbb{S})$ such that
\[
M_{nn_e}(\boldsymbol{\tau})|_{\partial K}=\partial_{\boldsymbol{n}_e}u_{\mathcal{T}}-\lambda_{\mathcal{T}} \;\textrm{ and }\; \int_K\boldsymbol{\tau}:\boldsymbol{\varsigma}dx=0\quad \forall~\boldsymbol{\varsigma}\in \boldsymbol{P}_{k-2}(K, \mathbb{S}).
\]
Using scaling argument,  it follows
\[
\|\boldsymbol{\tau}\|_{0,K}\eqsim h_K^{1/2}\|\partial_{\boldsymbol{n}_e}u_{\mathcal{T}}-\lambda_{\mathcal{T}}\|_{0,\partial K}.
\]
Hence we obtain from \eqref{eq:temp1} that
\begin{align*}
\|\partial_{\boldsymbol{n}_e}u_{\mathcal{T}}-\lambda_{\mathcal{T}}\|_{0,\partial K}^2=&\int_K(\mathcal{C}\boldsymbol{\sigma}_{\mathcal{T}}+\boldsymbol{\nabla}^2u_{\mathcal{T}}):\boldsymbol{\tau}dx=\int_K(\mathcal{C}\boldsymbol{\sigma}_{\mathcal{T}} - \boldsymbol{Q}_K^{k-2}(\mathcal{C}\boldsymbol{\sigma}_{\mathcal{T}})):\boldsymbol{\tau}dx \\
\leq&\|\mathcal{C}\boldsymbol{\sigma}_{\mathcal{T}} - \boldsymbol{Q}_K^{k-2}(\mathcal{C}\boldsymbol{\sigma}_{\mathcal{T}})\|_{0,K}\|\boldsymbol{\tau}\|_{0,K}
\end{align*}
Combining the last two inequalities gives
\[
\|\partial_{\boldsymbol{n}_e}u_{\mathcal{T}}-\lambda_{\mathcal{T}}\|_{0, \partial K}^2\lesssim h_K\|\mathcal{C}\boldsymbol{\sigma}_{\mathcal{T}} - \boldsymbol{Q}_K^{k-2}(\mathcal{C}\boldsymbol{\sigma}_{\mathcal{T}})\|_{0,K}^2.
\]
Next choosing $\boldsymbol{\tau}=\mathcal{C}\boldsymbol{\sigma}_{\mathcal{T}} - \boldsymbol{Q}_K^{k-2}(\mathcal{C}\boldsymbol{\sigma}_{\mathcal{T}})$ in \eqref{eq:temp1}, we get from the inverse inequality
\begin{align*}
\|\mathcal{C}\boldsymbol{\sigma}_{\mathcal{T}} - \boldsymbol{Q}_K^{k-2}(\mathcal{C}\boldsymbol{\sigma}_{\mathcal{T}})\|_{0,K}^2=&\int_K(\mathcal{C}\boldsymbol{\sigma}_{\mathcal{T}}+\boldsymbol{\nabla}^2u_{\mathcal{T}}):\boldsymbol{\tau}dx \\
=&\int_{\partial
K}M_{nn_e}(\boldsymbol{\tau})(\partial_{\boldsymbol{n}_e}u_{\mathcal{T}}-\lambda_{\mathcal{T}}) ds \\
\lesssim &h_K^{-1/2}\|\mathcal{C}\boldsymbol{\sigma}_{\mathcal{T}} - \boldsymbol{Q}_K^{k-2}(\mathcal{C}\boldsymbol{\sigma}_{\mathcal{T}})\|_{0,K}\|\partial_{\boldsymbol{n}_e}u_{\mathcal{T}}-\lambda_{\mathcal{T}}\|_{0,\partial
K},
\end{align*}
which ends the proof of \eqref{eq:Mn1}.
At last,
\eqref{eq:Mn2} can be derived from the definition of $\widehat{\boldsymbol{\sigma}}_{\mathcal{T}}$ and \eqref{eq:Mn1}, and
\eqref{eq:Mn3} can be derived from \eqref{eq:postprocessprop1} and \eqref{eq:Mn2}.
\end{proof}

Now we have the following reliability and efficiency of the error estimators for the bending moment.
\begin{lemma}[The reliability and efficiency of the error estimators]
There exist positive constants $C_1$ and $C_2$ depending only on the shape-regularity of the triangulations, the polynomial degree $k$ and the tensor $\mathcal{C}$ such that
\begin{equation}\label{eq:posterioriestimateupperbound}
\|\boldsymbol{\sigma}-\boldsymbol{\sigma}_{\mathcal{T}}\|_{\mathcal{C}}^2\leq C_1\eta^2(\boldsymbol{\sigma}_{\mathcal{T}}, f, {\mathcal{T}}),
\end{equation}
\begin{equation}\label{eq:posterioriestimatelowerbound}
\eta_{2}^2(\boldsymbol{\sigma}_{\mathcal{T}}, \mathcal{T})\leq C_2\|\boldsymbol{\sigma}-\boldsymbol{\sigma}_{\mathcal{T}}\|_{\mathcal{C}}^2.
\end{equation}
\end{lemma}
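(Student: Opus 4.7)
My plan follows the template of \cite{HuangHuangXu2011}, with the postprocessed bending moment $\widehat{\boldsymbol{\sigma}}_{\mathcal{T}}\in\boldsymbol{\Sigma}_{\mathcal{T}}^{\textsf{HHJ}}$ taking the place of $\boldsymbol{\sigma}_{\mathcal{T}}$ whenever normal--normal continuity is needed, and with the lifting identity \eqref{eq:Mn3} absorbing the gap $\widehat{\boldsymbol{\sigma}}_{\mathcal{T}}-\boldsymbol{\sigma}_{\mathcal{T}}$ into $\widetilde{\eta}_{1}$. For reliability I would split
\[
\boldsymbol{\sigma}-\boldsymbol{\sigma}_{\mathcal{T}}=(\boldsymbol{\sigma}-\widehat{\boldsymbol{\sigma}}_{\mathcal{T}})+(\widehat{\boldsymbol{\sigma}}_{\mathcal{T}}-\boldsymbol{\sigma}_{\mathcal{T}}),
\]
control the last summand directly by $\widetilde{\eta}_{1}^{2}$ via \eqref{eq:Mn3}, and apply to the first summand the Helmholtz decomposition of Beir\~ao da Veiga--Niiranen--Stenberg \cite{Beirao-da-VeigaNiiranenStenberg2007},
\[
\boldsymbol{\sigma}-\widehat{\boldsymbol{\sigma}}_{\mathcal{T}}=\mathcal{C}^{-1}\mathcal{K}(w)+\boldsymbol{\varepsilon}^{\perp}(\boldsymbol{\phi}),
\]
which is $\mathcal{C}$--orthogonal and satisfies $\|w\|_{2}+\|\boldsymbol{\phi}\|_{1}\lesssim\|\boldsymbol{\sigma}-\widehat{\boldsymbol{\sigma}}_{\mathcal{T}}\|_{\mathcal{C}}$. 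Expanding the $\mathcal{C}$--inner product gives $\|\boldsymbol{\sigma}-\widehat{\boldsymbol{\sigma}}_{\mathcal{T}}\|_{\mathcal{C}}^{2}=I_{1}+I_{2}$ with $I_{1}=(\mathcal{K}(w),\boldsymbol{\sigma}-\widehat{\boldsymbol{\sigma}}_{\mathcal{T}})$ and $I_{2}=(\mathcal{C}(\boldsymbol{\sigma}-\widehat{\boldsymbol{\sigma}}_{\mathcal{T}}),\boldsymbol{\varepsilon}^{\perp}(\boldsymbol{\phi}))$.

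For $I_{1}$ the HHJ--type integration by parts for $\widehat{\boldsymbol{\sigma}}_{\mathcal{T}}\in\boldsymbol{\Sigma}_{\mathcal{T}}^{\textsf{HHJ}}$ and $w\in H_{0}^{2}(\Omega)$ produces $\sum_{K}\int_{K}\widehat{\boldsymbol{\sigma}}_{\mathcal{T}}:\mathcal{K}(w)dx=-b_{\mathcal{T}}(\widehat{\boldsymbol{\sigma}}_{\mathcal{T}},w)$, since the $M_{n}\partial_{\boldsymbol{n}}w$ contributions on inter--element edges cancel by HHJ continuity and vanish on $\partial\Omega$ by the clamped condition on $w$. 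Replacing $w$ by $I_{\mathcal{T}}w$ via \eqref{eq:bp0}, applying \eqref{hcdg2p}, and using \eqref{eq:continuousvar2} for $\boldsymbol{\sigma}$ yields $I_{1}=\int_{\Omega}f(w-I_{\mathcal{T}}w)dx$, which after subtracting $Q_{K}^{k-3}f$ element--wise (orthogonal to $w-I_{\mathcal{T}}w$ by construction of $I_{\mathcal{T}}$) and invoking \eqref{eq:interpolationImestimate1} is bounded by $\mathrm{osc}(f,\mathcal{T})\|w\|_{2}$. For $I_{2}$ the null--Lagrangian identity $(\mathcal{K}(u),\boldsymbol{\varepsilon}^{\perp}(\boldsymbol{\phi}))=0$, obtained by writing $\partial_{2}(\partial_{i}u)\partial_{1}\phi_{i}-\partial_{1}(\partial_{i}u)\partial_{2}\phi_{i}$ in divergence form and using $\nabla u|_{\partial\Omega}=0$, reduces $I_{2}$ to $-(\mathcal{C}\widehat{\boldsymbol{\sigma}}_{\mathcal{T}},\boldsymbol{\varepsilon}^{\perp}(\boldsymbol{\phi}))$. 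Testing $\boldsymbol{\tau}=\boldsymbol{\varepsilon}^{\perp}(\boldsymbol{\phi}_{I})$ with $\boldsymbol{\phi}_{I}\in\boldsymbol{W}_{\mathcal{T}}$ a Cl\'ement quasi--interpolant of $\boldsymbol{\phi}$ in the error equation \eqref{eq:errorequation}, the $b_{\mathcal{T}}$ term vanishes by \eqref{eq:kerBh}, while the right--hand side vanishes because $\lambda_{\mathcal{T}}=\partial_{\boldsymbol{n}}u=0$ on $\partial\Omega$ and $M_{nn_{e}}(\boldsymbol{\varepsilon}^{\perp}(\boldsymbol{\phi}_{I}))$ is jump--free across interior edges; hence $(\mathcal{C}(\boldsymbol{\sigma}-\boldsymbol{\sigma}_{\mathcal{T}}),\boldsymbol{\varepsilon}^{\perp}(\boldsymbol{\phi}_{I}))=0$. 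This allows me to replace $\boldsymbol{\phi}$ by $\boldsymbol{\phi}-\boldsymbol{\phi}_{I}$ modulo an $(\widehat{\boldsymbol{\sigma}}_{\mathcal{T}}-\boldsymbol{\sigma}_{\mathcal{T}})$--contribution absorbed into $\widetilde{\eta}_{1}$ through \eqref{eq:Mn3}. Element--wise integration by parts of $-(\mathcal{C}\widehat{\boldsymbol{\sigma}}_{\mathcal{T}},\boldsymbol{\varepsilon}^{\perp}(\boldsymbol{\phi}-\boldsymbol{\phi}_{I}))$ produces the volumetric $\mathbf{rot}(\mathcal{C}\widehat{\boldsymbol{\sigma}}_{\mathcal{T}})$ and the tangential jump $[(\mathcal{C}\widehat{\boldsymbol{\sigma}}_{\mathcal{T}})\boldsymbol{t}_{e}]$ paired with $\boldsymbol{\phi}-\boldsymbol{\phi}_{I}$; the standard Cl\'ement bounds $\|\boldsymbol{\phi}-\boldsymbol{\phi}_{I}\|_{0,K}\lesssim h_{K}\|\boldsymbol{\phi}\|_{1,\omega_{K}}$ and $\|\boldsymbol{\phi}-\boldsymbol{\phi}_{I}\|_{0,e}\lesssim h_{e}^{1/2}\|\boldsymbol{\phi}\|_{1,\omega_{e}}$ then yield $|I_{2}|\lesssim\eta(\boldsymbol{\sigma}_{\mathcal{T}},f,\mathcal{T})\|\boldsymbol{\phi}\|_{1}$, after a further appeal to \eqref{eq:Mn3} plus inverse/trace inequalities to pass from $\widehat{\boldsymbol{\sigma}}_{\mathcal{T}}$ to $\boldsymbol{\sigma}_{\mathcal{T}}$ inside $\mathbf{rot}$ and the tangential jump. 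Collecting $I_{1},I_{2}$ and dividing by $\|\boldsymbol{\sigma}-\widehat{\boldsymbol{\sigma}}_{\mathcal{T}}\|_{\mathcal{C}}$ gives \eqref{eq:posterioriestimateupperbound}.

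For the efficiency \eqref{eq:posterioriestimatelowerbound} I would carry out the standard Verf\"urth bubble--function argument in the form used in \cite{HuangHuangXu2011}: on each $K$, testing $b_{K}\mathbf{rot}(\mathcal{C}\boldsymbol{\sigma}_{\mathcal{T}})$ against $\mathbf{rot}(\mathcal{C}\boldsymbol{\sigma}_{\mathcal{T}})$ and using $\mathbf{rot}(\mathcal{C}\boldsymbol{\sigma})=\mathbf{rot}(\mathcal{K}(u))=0$ together with an inverse inequality yields $h_{K}^{2}\|\mathbf{rot}(\mathcal{C}\boldsymbol{\sigma}_{\mathcal{T}})\|_{0,K}^{2}\lesssim\|\boldsymbol{\sigma}-\boldsymbol{\sigma}_{\mathcal{T}}\|_{\mathcal{C},K}^{2}$; for each interior edge the edge--bubble analogue together with the tangential continuity $[(\mathcal{C}\boldsymbol{\sigma})\boldsymbol{t}_{e}]=0$ and the previous volumetric bound gives $h_{K}\|[(\mathcal{C}\boldsymbol{\sigma}_{\mathcal{T}})\boldsymbol{t}_{e}]\|_{0,\partial K}^{2}\lesssim\|\boldsymbol{\sigma}-\boldsymbol{\sigma}_{\mathcal{T}}\|_{\mathcal{C},\omega_{K}}^{2}$. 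Summation concludes \eqref{eq:posterioriestimatelowerbound}. The main obstacle I expect is the bookkeeping in the analysis of $I_{2}$, where the Cl\'ement subtraction, the discrete orthogonality coming from $\boldsymbol{\varepsilon}^{\perp}(\boldsymbol{W}_{\mathcal{T}})\subset\boldsymbol{\Sigma}_{\mathcal{T}}^{\textsf{HHJ}}$, and the repeated conversions between $\widehat{\boldsymbol{\sigma}}_{\mathcal{T}}$ and $\boldsymbol{\sigma}_{\mathcal{T}}$ through \eqref{eq:Mn3} must be chained without introducing constants that depend unboundedly on the stabilization parameter; uniformity is secured by the a priori bound $h_{K}\xi_{K}\le C_{\xi}$ inherited from the definition of $\xi$ and the initial triangulation.
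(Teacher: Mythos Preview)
Your proposal is correct and follows the same overall template as the paper (Helmholtz decomposition for reliability, bubble functions for efficiency), but the organization differs in one notable respect. The paper applies the Helmholtz decomposition \emph{directly} to $\boldsymbol{\sigma}-\boldsymbol{\sigma}_{\mathcal{T}}$, not to $\boldsymbol{\sigma}-\widehat{\boldsymbol{\sigma}}_{\mathcal{T}}$. This pays off in the $\boldsymbol{\varepsilon}^{\perp}(\boldsymbol{\phi})$ part: since the key orthogonality $a(\boldsymbol{\sigma}_{\mathcal{T}},\boldsymbol{\varepsilon}^{\perp}(\boldsymbol{I}_{\mathcal{T}}^{\textsf{SZ}}\boldsymbol{\phi}))=0$ (obtained from \eqref{hcdg1} and \eqref{eq:kerBh}) already involves $\boldsymbol{\sigma}_{\mathcal{T}}$, one gets $a(\boldsymbol{\sigma}-\boldsymbol{\sigma}_{\mathcal{T}},\boldsymbol{\varepsilon}^{\perp}(\boldsymbol{\phi}))=a(\boldsymbol{\sigma}_{\mathcal{T}},\boldsymbol{\varepsilon}^{\perp}(\boldsymbol{I}_{\mathcal{T}}^{\textsf{SZ}}\boldsymbol{\phi}-\boldsymbol{\phi}))$ and the integration by parts delivers the $\eta_{2}$ indicator of $\boldsymbol{\sigma}_{\mathcal{T}}$ immediately, with no back-and-forth conversions between $\widehat{\boldsymbol{\sigma}}_{\mathcal{T}}$ and $\boldsymbol{\sigma}_{\mathcal{T}}$. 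The postprocessed moment $\widehat{\boldsymbol{\sigma}}_{\mathcal{T}}$ enters only in the $\mathcal{K}(\psi)$ part, where the lack of normal--normal continuity of $\boldsymbol{\sigma}_{\mathcal{T}}$ actually matters; there a single boundary term $\sum_{K}\int_{\partial K}M_{n}(\widehat{\boldsymbol{\sigma}}_{\mathcal{T}}-\boldsymbol{\sigma}_{\mathcal{T}})\partial_{\boldsymbol{n}}(I_{\mathcal{T}}\psi-\psi)\,ds$ survives and is bounded directly by $\widetilde{\eta}_{1}$ via \eqref{eq:Mn2} and \eqref{eq:interpolationImestimate1}. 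Your route trades this for a cleaner $I_{1}$ (pure $\mathrm{osc}(f,\mathcal{T})$) at the cost of the extra bookkeeping you anticipate in $I_{2}$: the correction $(\mathcal{C}(\widehat{\boldsymbol{\sigma}}_{\mathcal{T}}-\boldsymbol{\sigma}_{\mathcal{T}}),\boldsymbol{\varepsilon}^{\perp}(\boldsymbol{\phi}_{I}))$ and the conversion of $\mathbf{rot}(\mathcal{C}\widehat{\boldsymbol{\sigma}}_{\mathcal{T}})$, $[(\mathcal{C}\widehat{\boldsymbol{\sigma}}_{\mathcal{T}})\boldsymbol{t}_{e}]$ back to $\boldsymbol{\sigma}_{\mathcal{T}}$ via \eqref{eq:Mn3} and inverse/trace inequalities. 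Both routes work and both ultimately absorb the gap into $\widetilde{\eta}_{1}$ with a $C_{\xi}$-dependent constant; the paper's is just a bit shorter. The efficiency argument is identical.
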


\begin{proof}
The efficiency of the error estimator \eqref{eq:posterioriestimatelowerbound} is easily derived by using the technique  of bubble functions as in \cite[Theorem 3.2]{HuangHuangXu2011}. Then we only focus on the reliability of the error estimator \eqref{eq:posterioriestimateupperbound}.
Due to the Helmholtz decomposition (cf. \cite[Lemma 3.1]{HuangHuangXu2011}) of $\boldsymbol{\sigma}-\boldsymbol{\sigma}_{\mathcal{T}}$, there exist $\psi\in H_0^2(\Omega)$ and $\boldsymbol{\phi}\in\boldsymbol{H}^1(\Omega)$ such that
\[
\boldsymbol{\sigma}-\boldsymbol{\sigma}_{\mathcal{T}}=\mathcal{C}^{-1}\mathcal{K}(\psi)+\boldsymbol{\varepsilon}^{\perp}(\boldsymbol{\phi}),
\]
\begin{equation}\label{eq:helmholtzestimate}
\|\psi\|_2+\|\boldsymbol{\phi}\|_1\lesssim \|\boldsymbol{\sigma}-\boldsymbol{\sigma}_{\mathcal{T}}\|_0.
\end{equation}
Hence we have
\begin{equation}\label{eq:temp2}
\|\boldsymbol{\sigma}-\boldsymbol{\sigma}_{\mathcal{T}}\|_{\mathcal{C}}^2=\int_{\Omega}(\boldsymbol{\sigma}-\boldsymbol{\sigma}_{\mathcal{T}}):\mathcal{K}(\psi)dx + a(\boldsymbol{\sigma}-\boldsymbol{\sigma}_{\mathcal{T}}, \boldsymbol{\varepsilon}^{\perp}(\boldsymbol{\phi})).
\end{equation}
For the first term of \eqref{eq:temp2}, it follows from \eqref{eq:continuousvar2}, integration by parts and \eqref{eq:bp0}
\begin{align*}
\int_{\Omega}(\boldsymbol{\sigma}-\boldsymbol{\sigma}_{\mathcal{T}}):\mathcal{K}(\psi)dx=&\int_{\Omega}f\psi dx-\int_{\Omega}\boldsymbol{\sigma}_{\mathcal{T}}:\mathcal{K}(\psi)dx \\
=&\int_{\Omega}f\psi dx+b_{\mathcal{T}}(\boldsymbol{\sigma}_{\mathcal{T}}, \psi) + \sum_{K\in\mathcal{T}}\int_{\partial
K}M_{n}(\boldsymbol{\sigma}_{\mathcal{T}})\partial_{\boldsymbol{n}}\psi ds \\
=&\int_{\Omega}f\psi dx+b_{\mathcal{T}}(\boldsymbol{\sigma}_{\mathcal{T}}, I_{\mathcal{T}}\psi) - \sum_{K\in\mathcal{T}}\int_{\partial
K}M_{n}(\widehat{\boldsymbol{\sigma}}_{\mathcal{T}}-\boldsymbol{\sigma}_{\mathcal{T}})\partial_{\boldsymbol{n}}\psi ds.
\end{align*}
Using \eqref{hcdg2p} with $v=I_{\mathcal{T}}\psi$ and \eqref{eq:postprocessprop2}, we acquire
\begin{align*}
\int_{\Omega}f\psi dx+b_{\mathcal{T}}(\boldsymbol{\sigma}_{\mathcal{T}}, I_{\mathcal{T}}\psi)=&\int_{\Omega}f(\psi-I_{\mathcal{T}}\psi)dx - b_{\mathcal{T}}(\widehat{\boldsymbol{\sigma}}_{\mathcal{T}}-\boldsymbol{\sigma}_{\mathcal{T}},I_{\mathcal{T}}\psi) \\
=&\int_{\Omega}f(\psi-I_{\mathcal{T}}\psi)dx + \sum_{K\in\mathcal{T}}\int_{\partial K}M_n(\widehat{\boldsymbol{\sigma}}_{\mathcal{T}}-\boldsymbol{\sigma}_{\mathcal{T}})\partial_{\boldsymbol{n}}(I_{\mathcal{T}}\psi)ds.
\end{align*}
Then we get from the last two equalities
\begin{align*}
&\int_{\Omega}(\boldsymbol{\sigma}-\boldsymbol{\sigma}_{\mathcal{T}}):\mathcal{K}(\psi)dx \\
=&\sum_{K\in\mathcal{T}}\int_{K}(f-Q_K^{k-3}f)(\psi-I_{\mathcal{T}}\psi)dx + \sum_{K\in\mathcal{T}}\int_{\partial K}M_n(\widehat{\boldsymbol{\sigma}}_{\mathcal{T}}-\boldsymbol{\sigma}_{\mathcal{T}})\partial_{\boldsymbol{n}}(I_{\mathcal{T}}\psi-\psi)ds.
\end{align*}
Together with the Cauchy-Schwarz inquality, \eqref{eq:interpolationImestimate1} and \eqref{eq:Mn2}, it holds
\begin{equation}\label{eq:temp3}
\int_{\Omega}(\boldsymbol{\sigma}-\boldsymbol{\sigma}_{\mathcal{T}}):\mathcal{K}(\psi)dx\lesssim \eta_{1}(\boldsymbol{\sigma}_{\mathcal{T}}, f, {\mathcal{T}})\|\psi\|_2.
\end{equation}

Next consider the bound of the second term in \eqref{eq:temp2} which can be achieved by using the similar argument of Theorem~3.1 in \cite{HuangHuangXu2011}. Here we will rewrite the proof in a more compact manner. It readily follows from integration by parts
\[
a(\boldsymbol{\sigma}, \boldsymbol{\varepsilon}^{\perp}(\boldsymbol{\phi}))=\int_{\Omega}\mathcal{K}(u):\boldsymbol{\varepsilon}^{\perp}(\boldsymbol{\phi})dx=0.
\]
Let $\boldsymbol{I}_{\mathcal{T}}^{\textsf{SZ}}\boldsymbol{\phi}\in \boldsymbol{W}_{\mathcal{T}}$ be the vectorial Scott-Zhang interpolation of $\boldsymbol{\phi}$ designed in \cite{ScottZhang1990}. Then taking $\boldsymbol{\tau}=\boldsymbol{\varepsilon}^{\perp}(\boldsymbol{I}_{\mathcal{T}}^{\textsf{SZ}}\boldsymbol{\phi})$ in \eqref{hcdg1}, we obtain from \eqref{eq:kerBh}
\begin{equation}\label{eq:temp13}
a(\boldsymbol{\sigma}_{\mathcal{T}}, \boldsymbol{\varepsilon}^{\perp}(\boldsymbol{I}_{\mathcal{T}}^{\textsf{SZ}}\boldsymbol{\phi}))=0.
\end{equation}
Hence we get from the last two equalities
\[
a(\boldsymbol{\sigma}-\boldsymbol{\sigma}_{\mathcal{T}}, \boldsymbol{\varepsilon}^{\perp}(\boldsymbol{\phi}))=-a(\boldsymbol{\sigma}_{\mathcal{T}}, \boldsymbol{\varepsilon}^{\perp}(\boldsymbol{\phi}))=a(\boldsymbol{\sigma}_{\mathcal{T}}, \boldsymbol{\varepsilon}^{\perp}(\boldsymbol{I}_{\mathcal{T}}^{\textsf{SZ}}\boldsymbol{\phi} - \boldsymbol{\phi})),
\]
which is nothing but (3.18) in \cite{HuangHuangXu2011}. Thus it follows from integration by parts and the error estimates of $\boldsymbol{I}_{\mathcal{T}}^{\textsf{SZ}}$
\begin{equation}\label{eq:temp4}
a(\boldsymbol{\sigma}-\boldsymbol{\sigma}_{\mathcal{T}}, \boldsymbol{\varepsilon}^{\perp}(\boldsymbol{\phi}))\lesssim \eta_{2}(\boldsymbol{\sigma}_{\mathcal{T}}, \mathcal{T})\|\boldsymbol{\phi}\|_1.
\end{equation}
Finally we can achieve \eqref{eq:posterioriestimateupperbound} by using \eqref{eq:helmholtzestimate}-\eqref{eq:temp3} and \eqref{eq:temp4}.
\end{proof}

\section{Quasi-orthogonality}

Quasi-orthogonality of the bending moment will be derived in this section, which is indispensable in the analysis of the convergence and complexity of the adaptive algorithm. By means of the discrete Helmholtz decomposition in \cite{HuangHuangXu2011}, we first create the quasi-orthogonality for the postprocessed bending moment. Moreover, the stability of the postprocessing error with respect to the mesh is derived, which will be used in the proofs of the quasi-orthogonality and the discrete reliability of the error estimator.
With these, the quasi-orthogonality will be obtained from the following inequality
\begin{align}
&\left|a(\boldsymbol{\sigma}-\boldsymbol{\sigma}_{\mathcal{T}^{\ast}}, \boldsymbol{\sigma}_{\mathcal{T}}-\boldsymbol{\sigma}_{\mathcal{T}^{\ast}})\right| \notag\\
=&\left|a(\boldsymbol{\sigma}-\boldsymbol{\sigma}_{\mathcal{T}^{\ast}}, \widehat{\boldsymbol{\sigma}}_{\mathcal{T}}-\widehat{\boldsymbol{\sigma}}_{\mathcal{T}^{\ast}}) + a(\boldsymbol{\sigma}-\boldsymbol{\sigma}_{\mathcal{T}^{\ast}}, (\widehat{\boldsymbol{\sigma}}_{\mathcal{T}^{\ast}}-\boldsymbol{\sigma}_{\mathcal{T}^{\ast}})-(\widehat{\boldsymbol{\sigma}}_{\mathcal{T}}-\boldsymbol{\sigma}_{\mathcal{T}}))\right| \notag\\
\leq& \left|a(\boldsymbol{\sigma}-\boldsymbol{\sigma}_{\mathcal{T}^{\ast}}, \widehat{\boldsymbol{\sigma}}_{\mathcal{T}}-\widehat{\boldsymbol{\sigma}}_{\mathcal{T}^{\ast}})\right| + \|\boldsymbol{\sigma}-\boldsymbol{\sigma}_{\mathcal{T}^{\ast}}\|_{\mathcal{C}} \|(\widehat{\boldsymbol{\sigma}}_{\mathcal{T}^{\ast}}-\boldsymbol{\sigma}_{\mathcal{T}^{\ast}})-(\widehat{\boldsymbol{\sigma}}_{\mathcal{T}}-\boldsymbol{\sigma}_{\mathcal{T}})\|_{\mathcal{C}}. \label{eq:temp16}
\end{align}

To derive the quasi-orthogonality, we need a discrete operator $\mathcal{K}_{\mathcal{T}^{\ast}}: V_{\mathcal{T}^{\ast}} \to \boldsymbol{\Sigma}_{\mathcal{T}^{\ast}}^{\textsf{HHJ}}$ defined as follows (cf. \cite[(4.1)]{HuangHuangXu2011}): given $v\in V_{\mathcal{T}^{\ast}}$,
\[
\int_{\Omega}\mathcal{K}_{\mathcal{T}^{\ast}}(v):\boldsymbol{\tau}dx=-b_{\mathcal{T}^{\ast}}(\boldsymbol{\tau}, v) \quad \forall~\boldsymbol{\tau}\in \boldsymbol{\Sigma}_{\mathcal{T}^{\ast}}^{\textsf{HHJ}}.
\]
According to the definition of $\mathcal{K}_{\mathcal{T}^{\ast}}$ and the inf-sup condition \eqref{infsup}, we get (cf. \cite[(4.54)]{HuangHuangXu2011}),
\begin{equation}\label{eq:km}
\|v\|_{2,{\mathcal{T}^{\ast}}}\lesssim\|\mathcal{K}_{\mathcal{T}^{\ast}}(v)\|_0 \quad \forall~v\in V_{\mathcal{T}^{\ast}}.
\end{equation}

We have the following quasi-orthogonality for the postprocessed bending moment.
\begin{lemma}
It follows
\begin{equation}\label{eq:quasiorthogonalityhat}
\left|a(\boldsymbol{\sigma}-\boldsymbol{\sigma}_{\mathcal{T}^{\ast}}, \widehat{\boldsymbol{\sigma}}_{\mathcal{T}}-\widehat{\boldsymbol{\sigma}}_{\mathcal{T}^{\ast}})\right|\lesssim \mathrm{osc}(f, \mathcal{T}\backslash\mathcal{T}^{\ast})\|\boldsymbol{\sigma}-\boldsymbol{\sigma}_{\mathcal{T}^{\ast}}\|_{\mathcal{C}}.
\end{equation}
\end{lemma}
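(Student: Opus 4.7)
The plan is to test the $\mathcal{T}^{\ast}$-analogue of the error equation \eqref{eq:errorequation} against $\widehat{\boldsymbol{\sigma}}_{\mathcal{T}}-\widehat{\boldsymbol{\sigma}}_{\mathcal{T}^{\ast}}$ and then convert the resulting $b_{\mathcal{T}^{\ast}}$-term into a data oscillation. First I would observe that $\widehat{\boldsymbol{\sigma}}_{\mathcal{T}}$ already lies in $\boldsymbol{\Sigma}_{\mathcal{T}^{\ast}}^{\textsf{HHJ}}$, because it is polynomial on each $K\in\mathcal{T}$ (hence continuous across every $\mathcal{T}^{\ast}$-edge interior to a coarse triangle) and normal-normal continuous on the $\mathcal{T}$-skeleton by construction. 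Consequently $\widehat{\boldsymbol{\sigma}}_{\mathcal{T}}-\widehat{\boldsymbol{\sigma}}_{\mathcal{T}^{\ast}}$ is an admissible HHJ test, and the boundary sum on the right of \eqref{eq:errorequation} written on $\mathcal{T}^{\ast}$ collapses: the $M_{nn_e}$-jumps vanish on interior edges, while $\lambda_{\mathcal{T}^{\ast}}$ and $\partial_{\boldsymbol{n}_e}u$ are zero on $\partial\Omega$. This leaves
\[
a(\boldsymbol{\sigma}-\boldsymbol{\sigma}_{\mathcal{T}^{\ast}},\widehat{\boldsymbol{\sigma}}_{\mathcal{T}}-\widehat{\boldsymbol{\sigma}}_{\mathcal{T}^{\ast}}) = -b_{\mathcal{T}^{\ast}}(\widehat{\boldsymbol{\sigma}}_{\mathcal{T}}-\widehat{\boldsymbol{\sigma}}_{\mathcal{T}^{\ast}},w),\qquad w:=I_{\mathcal{T}^{\ast}}u-u_{\mathcal{T}^{\ast}}\in V_{\mathcal{T}^{\ast}}.
\]

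Second I would evaluate each half of the right-hand side. The fine piece is immediate from \eqref{hcdg2p} on $\mathcal{T}^{\ast}$: $-b_{\mathcal{T}^{\ast}}(\widehat{\boldsymbol{\sigma}}_{\mathcal{T}^{\ast}},w)=\int_{\Omega}fw\,dx$. For the coarse piece I would first verify the refinement-invariance $b_{\mathcal{T}^{\ast}}(\widehat{\boldsymbol{\sigma}}_{\mathcal{T}},w)=b_{\mathcal{T}}(\widehat{\boldsymbol{\sigma}}_{\mathcal{T}},w)$ on $w\in V_{\mathcal{T}^{\ast}}$: on each $\mathcal{T}^{\ast}$-only edge inside a coarse element $\widehat{\boldsymbol{\sigma}}_{\mathcal{T}}$ is smooth, so the two $M_{nt}$-contributions cancel thanks to the continuity of $w$ and the opposing outward tangents of the two subtriangles, while the volume integral collapses by additivity. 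Then \eqref{eq:bp0} turns $w$ into $I_{\mathcal{T}}w\in V_{\mathcal{T}}$, and \eqref{hcdg2p} on $\mathcal{T}$ gives $-b_{\mathcal{T}^{\ast}}(\widehat{\boldsymbol{\sigma}}_{\mathcal{T}},w)=\int_{\Omega}f I_{\mathcal{T}}w\,dx$. Subtracting, and using the $P_{k-3}$-orthogonality of $w-I_{\mathcal{T}}w$ on each $K\in\mathcal{T}$ together with $I_{\mathcal{T}}w|_K=w|_K$ for $K\in\mathcal{T}\cap\mathcal{T}^{\ast}$, yields
\[
-b_{\mathcal{T}^{\ast}}(\widehat{\boldsymbol{\sigma}}_{\mathcal{T}}-\widehat{\boldsymbol{\sigma}}_{\mathcal{T}^{\ast}},w) = \sum_{K\in\mathcal{T}\backslash\mathcal{T}^{\ast}}\int_K (f-Q_K^{k-3}f)(I_{\mathcal{T}}w-w)\,dx.
\]

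To close the argument, Cauchy--Schwarz together with \eqref{eq:interpolationImestimate2} and the finite overlap of $\mathcal{O}_{\mathcal{T}^{\ast}}(K)$ bounds the last display by $\mathrm{osc}(f,\mathcal{T}\backslash\mathcal{T}^{\ast})\|w\|_{2,\mathcal{T}^{\ast}}$, and $\|w\|_{2,\mathcal{T}^{\ast}}\lesssim\|\boldsymbol{\sigma}-\boldsymbol{\sigma}_{\mathcal{T}^{\ast}}\|_{\mathcal{C}}$ follows from the inf-sup condition \eqref{infsup} (a consequence of the discrete Helmholtz decomposition of \cite{HuangHuangXu2011}): testing with any $\boldsymbol{\tau}\in\boldsymbol{\Sigma}_{\mathcal{T}^{\ast}}^{\textsf{HHJ}}$ in the $\mathcal{T}^{\ast}$-error equation (whose right-hand side again vanishes) yields $|b_{\mathcal{T}^{\ast}}(\boldsymbol{\tau},w)|=|a(\boldsymbol{\sigma}-\boldsymbol{\sigma}_{\mathcal{T}^{\ast}},\boldsymbol{\tau})|\lesssim\|\boldsymbol{\sigma}-\boldsymbol{\sigma}_{\mathcal{T}^{\ast}}\|_{\mathcal{C}}\|\boldsymbol{\tau}\|_{0,\mathcal{T}^{\ast}}$. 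The main obstacle I anticipate is justifying the refinement-invariance $b_{\mathcal{T}^{\ast}}(\widehat{\boldsymbol{\sigma}}_{\mathcal{T}},\cdot)=b_{\mathcal{T}}(\widehat{\boldsymbol{\sigma}}_{\mathcal{T}},\cdot)$ on $V_{\mathcal{T}^{\ast}}$ and the vanishing of the boundary sum in the $\mathcal{T}^{\ast}$-error equation, both of which demand careful orientation bookkeeping; the remaining ingredients are the standard $I_{\mathcal{T}}$-machinery already prepared.
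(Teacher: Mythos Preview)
Your argument is correct and in fact a bit more direct than the paper's. The paper first applies the discrete Helmholtz decomposition of \cite{HuangHuangXu2011} to write $\widehat{\boldsymbol{\sigma}}_{\mathcal{T}}-\widehat{\boldsymbol{\sigma}}_{\mathcal{T}^{\ast}}=\mathcal{K}_{\mathcal{T}^{\ast}}(\psi)+\boldsymbol{\varepsilon}^{\perp}(\boldsymbol{\phi})$, observes that $a(\boldsymbol{\sigma}-\boldsymbol{\sigma}_{\mathcal{T}^{\ast}},\boldsymbol{\varepsilon}^{\perp}(\boldsymbol{\phi}))=0$, and then shows $\|\mathcal{K}_{\mathcal{T}^{\ast}}(\psi)\|_0\lesssim\mathrm{osc}(f,\mathcal{T}\backslash\mathcal{T}^{\ast})$ via exactly the same chain \eqref{hcdg2p}$\to b_{\mathcal{T}^{\ast}}=b_{\mathcal{T}}\to$\eqref{eq:bp0}$\to$\eqref{hcdg2p}$\to$\eqref{eq:interpolationImestimate2} that you use, finally closing by Cauchy--Schwarz. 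You instead test the $\mathcal{T}^{\ast}$-error equation directly with $\widehat{\boldsymbol{\sigma}}_{\mathcal{T}}-\widehat{\boldsymbol{\sigma}}_{\mathcal{T}^{\ast}}\in\boldsymbol{\Sigma}_{\mathcal{T}^{\ast}}^{\textsf{HHJ}}$, land on the same oscillation integral but with the test function $w=I_{\mathcal{T}^{\ast}}u-u_{\mathcal{T}^{\ast}}$ instead of $\psi$, and then bound $\|w\|_{2,\mathcal{T}^{\ast}}\lesssim\|\boldsymbol{\sigma}-\boldsymbol{\sigma}_{\mathcal{T}^{\ast}}\|_{\mathcal{C}}$ via the inf-sup condition \eqref{infsup}. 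The two routes share all the essential ingredients; your version spares the reader the auxiliary operator $\mathcal{K}_{\mathcal{T}^{\ast}}$ and the decomposition lemma for this particular step, while the paper's route keeps the Helmholtz machinery visible because it is reused verbatim in the discrete reliability proof later. One small remark: the inf-sup condition \eqref{infsup} is cited from \cite{HuangHuangXu2011} in its own right rather than as a corollary of the discrete Helmholtz decomposition, so you may want to adjust that parenthetical attribution.
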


\begin{proof}
Since $\widehat{\boldsymbol{\sigma}}_{\mathcal{T}}-\widehat{\boldsymbol{\sigma}}_{\mathcal{T}^{\ast}}\in\boldsymbol{\Sigma}_{\mathcal{T}^{\ast}}^{\textsf{HHJ}}$, making use of the discrete Helmholtz decomposition in Lemma~4.1 of \cite{HuangHuangXu2011}, there exist  $\psi\in V_{\mathcal{T}^{\ast}}$ and $\boldsymbol{\phi}\in\boldsymbol{W}_{\mathcal{T}^{\ast}}$ such that
\begin{equation}\label{eq:discretehelmholtz1}
\widehat{\boldsymbol{\sigma}}_{\mathcal{T}}-\widehat{\boldsymbol{\sigma}}_{\mathcal{T}^{\ast}}=\mathcal{K}_{\mathcal{T}^{\ast}}(\psi)+\boldsymbol{\varepsilon}^{\perp}(\boldsymbol{\phi}),
\end{equation}
\begin{equation}\label{eq:discretehelmholtz2}
\|\mathcal{K}_{\mathcal{T}^{\ast}}(\psi)\|_0+\|\boldsymbol{\phi}\|_1\lesssim \|\widehat{\boldsymbol{\sigma}}_{\mathcal{T}}-\widehat{\boldsymbol{\sigma}}_{\mathcal{T}^{\ast}}\|_0.
\end{equation}
Picking $\boldsymbol{\tau}=\boldsymbol{\varepsilon}^{\perp}(\boldsymbol{\phi})$ in \eqref{eq:errorequation} on $\mathcal{T}^{\ast}$, it holds from \eqref{eq:kerBh}
\[
a(\boldsymbol{\sigma}-\boldsymbol{\sigma}_{\mathcal{T}^{\ast}}, \boldsymbol{\varepsilon}^{\perp}(\boldsymbol{\phi}))=0.
\]
Hence
\begin{align}
a(\boldsymbol{\sigma}-\boldsymbol{\sigma}_{\mathcal{T}^{\ast}}, \widehat{\boldsymbol{\sigma}}_{\mathcal{T}}-\widehat{\boldsymbol{\sigma}}_{\mathcal{T}^{\ast}})=&a(\boldsymbol{\sigma}-\boldsymbol{\sigma}_{\mathcal{T}^{\ast}}, \mathcal{K}_{\mathcal{T}^{\ast}}(\psi))+a(\boldsymbol{\sigma}-\boldsymbol{\sigma}_{\mathcal{T}^{\ast}}, \boldsymbol{\varepsilon}^{\perp}(\boldsymbol{\phi})) \notag\\
= &a(\boldsymbol{\sigma}-\boldsymbol{\sigma}_{\mathcal{T}^{\ast}}, \mathcal{K}_{\mathcal{T}^{\ast}}(\psi)). \label{eq:temp5}
\end{align}
By the definition of $\mathcal{K}_{\mathcal{T}^{\ast}}(\psi)$ and \eqref{eq:kerBh},
\[
\|\mathcal{K}_{\mathcal{T}^{\ast}}(\psi)\|_0^2=-b_{\mathcal{T}^{\ast}}(\mathcal{K}_{\mathcal{T}^{\ast}}(\psi), \psi)=-b_{\mathcal{T}^{\ast}}(\widehat{\boldsymbol{\sigma}}_{\mathcal{T}}-\widehat{\boldsymbol{\sigma}}_{\mathcal{T}^{\ast}}, \psi).
\]
It is easy to see that
\[
b_{\mathcal{T}^{\ast}}(\boldsymbol{\tau}, v)=b_{\mathcal{T}}(\boldsymbol{\tau}, v) \quad \forall~\boldsymbol{\tau}\in\boldsymbol{\Sigma}_{\mathcal{T}}, \;v\in V_{\mathcal{T}^{\ast}}.
\]
Together with \eqref{hcdg2p} on $\mathcal{T}^{\ast}$, we have
\[
\|\mathcal{K}_{\mathcal{T}^{\ast}}(\psi)\|_0^2=-\int_{\Omega}f\psi dx -  b_{\mathcal{T}^{\ast}}(\widehat{\boldsymbol{\sigma}}_{\mathcal{T}}, \psi)=-\int_{\Omega}f\psi dx -  b_{\mathcal{T}}(\widehat{\boldsymbol{\sigma}}_{\mathcal{T}}, \psi).
\]
Using \eqref{hcdg2p} again with $v=I_{\mathcal{T}}\psi$ and noting the fact that $I_{\mathcal{T}}\psi=\psi$ on $\mathcal{T}\cap\mathcal{T}^{\ast}$, it holds
from \eqref{eq:bp0}
\begin{align*}
\|\mathcal{K}_{\mathcal{T}^{\ast}}(\psi)\|_0^2=&\int_{\Omega}f(I_{\mathcal{T}}\psi-\psi) dx +  b_{\mathcal{T}}(\widehat{\boldsymbol{\sigma}}_{\mathcal{T}}, I_{\mathcal{T}}\psi-\psi)=\int_{\Omega}f(I_{\mathcal{T}}\psi-\psi) dx \\
=&\sum_{K\in\mathcal{T}\backslash\mathcal{T}^{\ast}}\int_{K}f(I_{\mathcal{T}}\psi-\psi) dx=\sum_{K\in\mathcal{T}\backslash\mathcal{T}^{\ast}}\int_{K}(f-Q_K^{k-3}f)(I_{\mathcal{T}}\psi-\psi) dx.
\end{align*}
Then we obtain from the Cauchy-Schwarz inequality, \eqref{eq:interpolationImestimate2} and \eqref{eq:km}
\begin{equation}\label{eq:temp6}
\|\mathcal{K}_{\mathcal{T}^{\ast}}(\psi)\|_0\lesssim \mathrm{osc}(f, \mathcal{T}\backslash\mathcal{T}^{\ast}).
\end{equation}
Therefore we finish the proof from \eqref{eq:temp5}-\eqref{eq:temp6} and the Cauchy-Schwarz inequality.
\end{proof}

\begin{lemma}
For any $\delta>0$, it follows
\begin{align}
&\sqrt{2}^{1+\gamma}\widetilde{\eta}_{1}^2(\boldsymbol{\sigma}_{\mathcal{T}^{\ast}}, \mathcal{T}^{\ast}\backslash\mathcal{T}) \notag\\
\leq &  (1+\delta)\widetilde{\eta}_{1}^2(\boldsymbol{\sigma}_{\mathcal{T}}, \mathcal{T}\backslash\mathcal{T}^{\ast})  + (1+\delta^{-1})\sum_{K\in\mathcal{T}\backslash\mathcal{T}^{\ast}}C_{\xi}\|\mathcal{C}(\boldsymbol{\sigma}_{\mathcal{T}^{\ast}}-\boldsymbol{\sigma}_{\mathcal{T}})\|_{0,K}^2. \label{eq:temp11}
\end{align}
\end{lemma}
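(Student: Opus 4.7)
The plan is to regroup the sum defining the left-hand side according to the coarse parents in $\mathcal{T}\backslash\mathcal{T}^{\ast}$, and then on each such parent combine three ingredients: the bisection shrinkage of areas, the best-approximation property of the $L^2$-projection, and Young's inequality. Each $K^{\ast}\in\mathcal{T}^{\ast}\backslash\mathcal{T}$ is strictly contained in a unique $K\in\mathcal{T}\backslash\mathcal{T}^{\ast}$ (the element of $\mathcal{T}$ containing $K^{\ast}$; it cannot itself survive in $\mathcal{T}^{\ast}$ since it has been further subdivided). Because newest-vertex bisection halves areas at every step, any such descendant satisfies $|K^{\ast}|\leq|K|/2$, hence $h_{K^{\ast}}\leq h_K/\sqrt{2}$, and since $1+\gamma>0$,
\[
h_{K^{\ast}}\xi_{K^{\ast}}=C_0 h_{K^{\ast}}^{1+\gamma}\leq \frac{C_0 h_K^{1+\gamma}}{\sqrt{2}^{\,1+\gamma}}=\frac{h_K\xi_K}{\sqrt{2}^{\,1+\gamma}}.
\]
This is exactly what produces the prefactor $\sqrt{2}^{\,1+\gamma}$ in the statement.

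Next, because $\boldsymbol{Q}_K^{k-2}(\mathcal{C}\boldsymbol{\sigma}_{\mathcal{T}})$ restricts to a polynomial in $\boldsymbol{P}_{k-2}(K^{\ast},\mathbb{S})$, the $L^2$-optimality of $\boldsymbol{Q}_{K^{\ast}}^{k-2}$ on $K^{\ast}$ yields
\[
\|\mathcal{C}\boldsymbol{\sigma}_{\mathcal{T}^{\ast}}-\boldsymbol{Q}_{K^{\ast}}^{k-2}(\mathcal{C}\boldsymbol{\sigma}_{\mathcal{T}^{\ast}})\|_{0,K^{\ast}}\leq\|\mathcal{C}\boldsymbol{\sigma}_{\mathcal{T}^{\ast}}-\boldsymbol{Q}_K^{k-2}(\mathcal{C}\boldsymbol{\sigma}_{\mathcal{T}})\|_{0,K^{\ast}}.
\]
Decomposing $\mathcal{C}\boldsymbol{\sigma}_{\mathcal{T}^{\ast}}-\boldsymbol{Q}_K^{k-2}(\mathcal{C}\boldsymbol{\sigma}_{\mathcal{T}})=\bigl(\mathcal{C}\boldsymbol{\sigma}_{\mathcal{T}}-\boldsymbol{Q}_K^{k-2}(\mathcal{C}\boldsymbol{\sigma}_{\mathcal{T}})\bigr)+\mathcal{C}(\boldsymbol{\sigma}_{\mathcal{T}^{\ast}}-\boldsymbol{\sigma}_{\mathcal{T}})$ and applying Young's inequality with parameter $\delta$ then gives
\[
\|\mathcal{C}\boldsymbol{\sigma}_{\mathcal{T}^{\ast}}-\boldsymbol{Q}_{K^{\ast}}^{k-2}(\mathcal{C}\boldsymbol{\sigma}_{\mathcal{T}^{\ast}})\|_{0,K^{\ast}}^2\leq (1+\delta)\|\mathcal{C}\boldsymbol{\sigma}_{\mathcal{T}}-\boldsymbol{Q}_K^{k-2}(\mathcal{C}\boldsymbol{\sigma}_{\mathcal{T}})\|_{0,K^{\ast}}^2+(1+\delta^{-1})\|\mathcal{C}(\boldsymbol{\sigma}_{\mathcal{T}^{\ast}}-\boldsymbol{\sigma}_{\mathcal{T}})\|_{0,K^{\ast}}^2.
\]

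To finish, I multiply through by $h_{K^{\ast}}\xi_{K^{\ast}}$, insert the bisection bound to control it by $h_K\xi_K/\sqrt{2}^{\,1+\gamma}$, and sum over the children $K^{\ast}\subset K$: because these children partition $K$, the $\|\cdot\|_{0,K^{\ast}}^2$ contributions reassemble into $\|\cdot\|_{0,K}^2$. Summing over $K\in\mathcal{T}\backslash\mathcal{T}^{\ast}$ and multiplying the whole inequality by $\sqrt{2}^{\,1+\gamma}$ yields $(1+\delta)\widetilde{\eta}_1^2(\boldsymbol{\sigma}_{\mathcal{T}},\mathcal{T}\backslash\mathcal{T}^{\ast})$ as the first term on the right. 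For the second term it remains only to bound $h_K\xi_K\leq C_\xi$, which is immediate because for the unique ancestor $K_0\in\mathcal{T}_0$ of $K$ one has $h_K\leq h_{K_0}$, so $h_K\xi_K=C_0 h_K^{1+\gamma}\leq C_0 h_{K_0}^{1+\gamma}=h_{K_0}\xi_{K_0}\leq C_\xi$. The argument is essentially mechanical bookkeeping; the one substantive step is the bisection reduction paired with $L^2$-projection optimality, and the hypothesis $\gamma>-1$ is used exactly where the contraction factor $\sqrt{2}^{\,1+\gamma}>1$ is generated.
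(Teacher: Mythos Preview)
Your proof is correct and follows essentially the same route as the paper: localize to each coarse element $K\in\mathcal{T}\backslash\mathcal{T}^{\ast}$, use the bisection inequality $h_{K^{\ast}}\xi_{K^{\ast}}\leq h_K\xi_K/\sqrt{2}^{\,1+\gamma}$, invoke $L^2$-projection optimality on the children, apply Young's inequality, and finally bound $h_K\xi_K\leq C_\xi$. The only cosmetic difference is that the paper first compares $\boldsymbol{Q}_{K^{\ast}}^{k-2}(\mathcal{C}\boldsymbol{\sigma}_{\mathcal{T}^{\ast}})$ against $\boldsymbol{Q}_K^{k-2}(\mathcal{C}\boldsymbol{\sigma}_{\mathcal{T}^{\ast}})$ and afterwards splits via the triangle inequality on $K$, whereas you compare directly against $\boldsymbol{Q}_K^{k-2}(\mathcal{C}\boldsymbol{\sigma}_{\mathcal{T}})$ and split at the child level; both orderings are equivalent.
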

\begin{proof}
It is sufficient to prove that for any $K\in\mathcal{T}\backslash\mathcal{T}^{\ast}$,
\begin{align}
&\sqrt{2}^{1+\gamma}\widetilde{\eta}_{1}^2(\boldsymbol{\sigma}_{\mathcal{T}^{\ast}}, \mathcal{T}^{\ast}(K)) \notag\\
\leq &  (1+\delta)\widetilde{\eta}_{1}^2(\boldsymbol{\sigma}_{\mathcal{T}}, K) + (1+\delta^{-1})C_{\xi}\|\mathcal{C}(\boldsymbol{\sigma}_{\mathcal{T}^{\ast}}-\boldsymbol{\sigma}_{\mathcal{T}})\|_{0,K}^2. \label{eq:temp10}
\end{align}
For each $K^{\prime}\in\mathcal{T}^{\ast}(K)$, by the definition of $L^2$-orthogonal projection $\boldsymbol{Q}_{K^{\prime}}^{k-2}$ and the fact $h_{K^{\prime}}\leq\frac{1}{\sqrt{2}}h_{K}$,
\[
\sqrt{2}^{1+\gamma}h_{K^{\prime}}\xi_{K^{\prime}}\|\mathcal{C}\boldsymbol{\sigma}_{\mathcal{T}^{\ast}} - \boldsymbol{Q}_{K^{\prime}}^{k-2}(\mathcal{C}\boldsymbol{\sigma}_{\mathcal{T}^{\ast}})\|_{0,{K^{\prime}}}^2 \leq  h_{K}\xi_{K}\|\mathcal{C}\boldsymbol{\sigma}_{\mathcal{T}^{\ast}} - \boldsymbol{Q}_{K}^{k-2}(\mathcal{C}\boldsymbol{\sigma}_{\mathcal{T}^{\ast}})\|_{0,{K^{\prime}}}^2 .
\]
Summing the last inequality over all $K^{\prime}\in\mathcal{T}^{\ast}(K)$, it holds
\[
\sqrt{2}^{1+\gamma}\widetilde{\eta}_{1}^2(\boldsymbol{\sigma}_{\mathcal{T}^{\ast}}, \mathcal{T}^{\ast}(K))
\leq  h_{K}\xi_{K}\|\mathcal{C}\boldsymbol{\sigma}_{\mathcal{T}^{\ast}} - \boldsymbol{Q}_{K}^{k-2}(\mathcal{C}\boldsymbol{\sigma}_{\mathcal{T}^{\ast}})\|_{0,K}^2.
\]
Due to the triangle inequality and the definition of $L^2$-orthogonal projection $\boldsymbol{Q}_{K}^{k-2}$,
\begin{align*}
&\|\mathcal{C}\boldsymbol{\sigma}_{\mathcal{T}^{\ast}} - \boldsymbol{Q}_{K}^{k-2}(\mathcal{C}\boldsymbol{\sigma}_{\mathcal{T}^{\ast}})\|_{0,K} \\
\leq &
\|\mathcal{C}(\boldsymbol{\sigma}_{\mathcal{T}^{\ast}}-\boldsymbol{\sigma}_{\mathcal{T}}) - \boldsymbol{Q}_{K}^{k-2}(\mathcal{C}(\boldsymbol{\sigma}_{\mathcal{T}^{\ast}}-\boldsymbol{\sigma}_{\mathcal{T}}))\|_{0,K}+\|\mathcal{C}\boldsymbol{\sigma}_{\mathcal{T}} - \boldsymbol{Q}_{K}^{k-2}(\mathcal{C}\boldsymbol{\sigma}_{\mathcal{T}})\|_{0,K} \\
\leq & \|\mathcal{C}(\boldsymbol{\sigma}_{\mathcal{T}^{\ast}}-\boldsymbol{\sigma}_{\mathcal{T}})\|_{0,K} + \|\mathcal{C}\boldsymbol{\sigma}_{\mathcal{T}} - \boldsymbol{Q}_{K}^{k-2}(\mathcal{C}\boldsymbol{\sigma}_{\mathcal{T}})\|_{0,K}.
\end{align*}
Thus \eqref{eq:temp10} can be obtained by using the last two inequalities, the Young's inequality and the definition of $C_{\xi}$.
\end{proof}

Next we show the stability of the postprocessing error with respect to the mesh.
\begin{lemma}
It follows
\begin{equation}\label{eq:stabilitypostprocess}
\|(\widehat{\boldsymbol{\sigma}}_{\mathcal{T}^{\ast}}-\boldsymbol{\sigma}_{\mathcal{T}^{\ast}})-(\widehat{\boldsymbol{\sigma}}_{\mathcal{T}}-\boldsymbol{\sigma}_{\mathcal{T}})\|_{\mathcal{C}}^2 \lesssim  C_{\xi}^2\|\boldsymbol{\sigma}_{\mathcal{T}^{\ast}}-\boldsymbol{\sigma}_{\mathcal{T}}\|_{\mathcal{C}}^2 +
C_{\xi}\widetilde{\eta}_{1}^2(\boldsymbol{\sigma}_{\mathcal{T}}, \mathcal{T}\backslash\mathcal{T}^{\ast}).
\end{equation}
\end{lemma}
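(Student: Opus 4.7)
The plan is to split $\mathcal{T}^* = (\mathcal{T}\cap\mathcal{T}^*) \cup (\mathcal{T}^*\setminus\mathcal{T})$ and treat the common elements and the genuinely refined elements by different arguments before summing the two contributions.

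On a common element $K\in\mathcal{T}\cap\mathcal{T}^*$, I would derive an analogue of Lemma~\ref{lem:traceresiduallifting} for the difference. Writing the identity \eqref{eq:temp1} for both $\mathcal{T}$ and $\mathcal{T}^*$ on the same $K$ and subtracting gives
\[
\int_{\partial K}M_{nn_e}(\boldsymbol{\tau})\bigl[(\partial_{\boldsymbol{n}_e}u_{\mathcal{T}^*}-\lambda_{\mathcal{T}^*})-(\partial_{\boldsymbol{n}_e}u_{\mathcal{T}}-\lambda_{\mathcal{T}})\bigr]ds=\int_K\bigl(\mathcal{C}(\boldsymbol{\sigma}_{\mathcal{T}^*}-\boldsymbol{\sigma}_{\mathcal{T}})+\boldsymbol{\nabla}^2(u_{\mathcal{T}^*}-u_{\mathcal{T}})\bigr):\boldsymbol{\tau}\,dx
\]
for every $\boldsymbol{\tau}\in\boldsymbol{P}_{k-1}(K,\mathbb{S})$. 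Since $u_{\mathcal{T}}|_K,u_{\mathcal{T}^*}|_K\in P_k(K)$, the Hessian difference already lies in $\boldsymbol{P}_{k-2}(K,\mathbb{S})$. Testing with a $\boldsymbol{\tau}$ that is $L^2(K)$-orthogonal to $\boldsymbol{P}_{k-2}(K,\mathbb{S})$ and whose normal bending trace matches the jump on $\partial K$, together with the scaling $\|\boldsymbol{\tau}\|_{0,K}\eqsim h_K^{1/2}\|M_{nn_e}(\boldsymbol{\tau})\|_{0,\partial K}$, yields
\[
\|(\partial_{\boldsymbol{n}_e}u_{\mathcal{T}^*}-\lambda_{\mathcal{T}^*})-(\partial_{\boldsymbol{n}_e}u_{\mathcal{T}}-\lambda_{\mathcal{T}})\|_{0,\partial K}^2\lesssim h_K\|\mathcal{C}(\boldsymbol{\sigma}_{\mathcal{T}^*}-\boldsymbol{\sigma}_{\mathcal{T}})\|_{0,K}^2.
\]
Since $(\widehat{\boldsymbol{\sigma}}_{\mathcal{T}^*}-\boldsymbol{\sigma}_{\mathcal{T}^*})-(\widehat{\boldsymbol{\sigma}}_{\mathcal{T}}-\boldsymbol{\sigma}_{\mathcal{T}})|_K$ is itself $L^2(K)$-orthogonal to $\boldsymbol{P}_{k-2}(K,\mathbb{S})$ with normal bending trace equal to $\xi_K$ times the above jump, a second scaling argument bounds its element $L^2$-norm squared by $h_K^2\xi_K^2\|\mathcal{C}(\boldsymbol{\sigma}_{\mathcal{T}^*}-\boldsymbol{\sigma}_{\mathcal{T}})\|_{0,K}^2\le C_\xi^2\|\mathcal{C}(\boldsymbol{\sigma}_{\mathcal{T}^*}-\boldsymbol{\sigma}_{\mathcal{T}})\|_{0,K}^2$, where I used $h_K\xi_K\le C_\xi$, valid under refinement because $1+\gamma>0$. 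Summing over $\mathcal{T}\cap\mathcal{T}^*$ produces a $C_\xi^2\|\boldsymbol{\sigma}_{\mathcal{T}^*}-\boldsymbol{\sigma}_{\mathcal{T}}\|_{\mathcal{C}}^2$ contribution with no oscillation term.

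For $K^*\in\mathcal{T}^*\setminus\mathcal{T}$ no such cancellation is available, so I would instead apply the triangle inequality and invoke \eqref{eq:Mn3} of Lemma~\ref{lem:traceresiduallifting} separately for $\widehat{\boldsymbol{\sigma}}_{\mathcal{T}^*}-\boldsymbol{\sigma}_{\mathcal{T}^*}$ on $K^*$ and for $\widehat{\boldsymbol{\sigma}}_{\mathcal{T}}-\boldsymbol{\sigma}_{\mathcal{T}}$ on the parent $K\in\mathcal{T}\setminus\mathcal{T}^*$, using that the $K^*\subset K$ partition $K$. Extracting one factor of $C_\xi$ from each $h^2\xi^2$ via $h\xi\le C_\xi$, the resulting sum is controlled by $C_\xi\bigl(\widetilde\eta_1^2(\boldsymbol{\sigma}_{\mathcal{T}^*},\mathcal{T}^*\setminus\mathcal{T})+\widetilde\eta_1^2(\boldsymbol{\sigma}_{\mathcal{T}},\mathcal{T}\setminus\mathcal{T}^*)\bigr)$. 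Invoking \eqref{eq:temp11} with, say, $\delta=1$ rewrites the first summand as a multiple of $\widetilde\eta_1^2(\boldsymbol{\sigma}_{\mathcal{T}},\mathcal{T}\setminus\mathcal{T}^*)$ plus a $C_\xi\sum_{K\in\mathcal{T}\setminus\mathcal{T}^*}\|\mathcal{C}(\boldsymbol{\sigma}_{\mathcal{T}^*}-\boldsymbol{\sigma}_{\mathcal{T}})\|_{0,K}^2$ remainder, which after multiplication by the outer $C_\xi$ is absorbed into the $C_\xi^2\|\boldsymbol{\sigma}_{\mathcal{T}^*}-\boldsymbol{\sigma}_{\mathcal{T}}\|_{\mathcal{C}}^2$ term. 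Combining both regimes delivers \eqref{eq:stabilitypostprocess}.

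The main obstacle is the treatment of the common elements: a naive triangle inequality there would generate a $C_\xi\widetilde\eta_1^2(\boldsymbol{\sigma}_{\mathcal{T}},\mathcal{T}\cap\mathcal{T}^*)$ contribution that is not permitted on the right-hand side of \eqref{eq:stabilitypostprocess}. The cancellation of $\boldsymbol{\nabla}^2(u_{\mathcal{T}^*}-u_{\mathcal{T}})$ against test functions orthogonal to $\boldsymbol{P}_{k-2}(K,\mathbb{S})$ is the decisive ingredient that removes this spurious oscillation and shrinks the local bound down to $\|\mathcal{C}(\boldsymbol{\sigma}_{\mathcal{T}^*}-\boldsymbol{\sigma}_{\mathcal{T}})\|_{0,K}$.
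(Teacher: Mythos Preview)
Your proposal is correct and follows essentially the same approach as the paper: the same splitting into $\mathcal{T}\cap\mathcal{T}^*$ and $\mathcal{T}\setminus\mathcal{T}^*$, the same subtraction of \eqref{eq:temp1} on common elements followed by the scaling argument from Lemma~\ref{lem:traceresiduallifting}, and the same use of \eqref{eq:Mn3} together with \eqref{eq:temp11} (with $\delta=1$) on the refined part. Your closing remark about why a naive triangle inequality on common elements would fail is precisely the point the paper's argument hinges on.
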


\begin{proof}
For any $K\in\mathcal{T}\cap\mathcal{T}^{\ast}$,  we get from \eqref{eq:temp1} on $\mathcal{T}$ and $\mathcal{T}^{\ast}$ that for any $\boldsymbol{\tau}\in \boldsymbol{P}_{k-1}(K, \mathbb{S})$,
\begin{align*}
&\int_{\partial
K}M_{nn_e}(\boldsymbol{\tau})((\partial_{\boldsymbol{n}_e}u_{\mathcal{T}^{\ast}}-\lambda_{\mathcal{T}^{\ast}})-(\partial_{\boldsymbol{n}_e}u_{\mathcal{T}}-\lambda_{\mathcal{T}})) ds \\
=&\int_K(\mathcal{C}(\boldsymbol{\sigma}_{\mathcal{T}^{\ast}}-\boldsymbol{\sigma}_{\mathcal{T}})+\boldsymbol{\nabla}^2(u_{\mathcal{T}^{\ast}}-u_{\mathcal{T}})):\boldsymbol{\tau}dx.
\end{align*}
Then using the similar argument as in the proof of \eqref{eq:Mn1}, it holds
\begin{align}
&\|(\partial_{\boldsymbol{n}_e}u_{\mathcal{T}^{\ast}}-\lambda_{\mathcal{T}^{\ast}})-(\partial_{\boldsymbol{n}_e}u_{\mathcal{T}}-\lambda_{\mathcal{T}})\|_{0,\partial K}^2 \notag\\
\eqsim & h_K\|\mathcal{C}(\boldsymbol{\sigma}_{\mathcal{T}^{\ast}}-\boldsymbol{\sigma}_{\mathcal{T}}) - \boldsymbol{Q}_K^{k-2}(\mathcal{C}(\boldsymbol{\sigma}_{\mathcal{T}^{\ast}}-\boldsymbol{\sigma}_{\mathcal{T}}))\|_{0,K}^2\leq h_K\|\mathcal{C}(\boldsymbol{\sigma}_{\mathcal{T}^{\ast}}-\boldsymbol{\sigma}_{\mathcal{T}})\|_{0,K}^2. \label{eq:temp7}
\end{align}
It is easy to see from the definitions of $\widehat{\boldsymbol{\sigma}}_{\mathcal{T}}$ and $\widehat{\boldsymbol{\sigma}}_{\mathcal{T}^{\ast}}$ that
\[
\int_K((\widehat{\boldsymbol{\sigma}}_{\mathcal{T}^{\ast}}-\boldsymbol{\sigma}_{\mathcal{T}^{\ast}})-(\widehat{\boldsymbol{\sigma}}_{\mathcal{T}}-\boldsymbol{\sigma}_{\mathcal{T}})):\boldsymbol{\tau}dx=0 \quad \forall~\boldsymbol{\tau}\in \boldsymbol{P}_{k-2}(K, \mathbb{S}).
\]
Thus scaling argument implies
\begin{align*}
&\|(\widehat{\boldsymbol{\sigma}}_{\mathcal{T}^{\ast}}-\boldsymbol{\sigma}_{\mathcal{T}^{\ast}})-(\widehat{\boldsymbol{\sigma}}_{\mathcal{T}}-\boldsymbol{\sigma}_{\mathcal{T}})\|_{0,K}^2 \\
\eqsim & h_K\|M_n((\widehat{\boldsymbol{\sigma}}_{\mathcal{T}^{\ast}}-\boldsymbol{\sigma}_{\mathcal{T}^{\ast}})-(\widehat{\boldsymbol{\sigma}}_{\mathcal{T}}-\boldsymbol{\sigma}_{\mathcal{T}}))\|_{0,\partial K}^2 \\
\eqsim & h_K\xi_K^2\|(\partial_{\boldsymbol{n}_e}u_{\mathcal{T}^{\ast}}-\lambda_{\mathcal{T}^{\ast}})-(\partial_{\boldsymbol{n}_e}u_{\mathcal{T}}-\lambda_{\mathcal{T}})\|_{0,\partial K}^2,
\end{align*}
which together with \eqref{eq:temp7} indicates
\[
\sum_{K\in\mathcal{T}\cap\mathcal{T}^{\ast}}\|(\widehat{\boldsymbol{\sigma}}_{\mathcal{T}^{\ast}}-\boldsymbol{\sigma}_{\mathcal{T}^{\ast}})-(\widehat{\boldsymbol{\sigma}}_{\mathcal{T}}-\boldsymbol{\sigma}_{\mathcal{T}})\|_{0,K}^2
\lesssim C_{\xi}^2\|\boldsymbol{\sigma}_{\mathcal{T}^{\ast}}-\boldsymbol{\sigma}_{\mathcal{T}}\|_{\mathcal{C}}^2.
\]
On the other side, we get from \eqref{eq:Mn3}
\begin{align*}
&\sum_{K\in\mathcal{T}\backslash\mathcal{T}^{\ast}}\|(\widehat{\boldsymbol{\sigma}}_{\mathcal{T}^{\ast}}-\boldsymbol{\sigma}_{\mathcal{T}^{\ast}})-(\widehat{\boldsymbol{\sigma}}_{\mathcal{T}}-\boldsymbol{\sigma}_{\mathcal{T}})\|_{0,K}^2 \\
\leq & 2\sum_{K\in\mathcal{T}^{\ast}\backslash\mathcal{T}}\|\widehat{\boldsymbol{\sigma}}_{\mathcal{T}^{\ast}}-\boldsymbol{\sigma}_{\mathcal{T}^{\ast}}\|_{0,K}^2+2\sum_{K\in\mathcal{T}\backslash\mathcal{T}^{\ast}}\|\widehat{\boldsymbol{\sigma}}_{\mathcal{T}}-\boldsymbol{\sigma}_{\mathcal{T}}\|_{0,K}^2 \\
\lesssim & C_{\xi}\widetilde{\eta}_{1}^2(\boldsymbol{\sigma}_{\mathcal{T}^{\ast}}, \mathcal{T}^{\ast}\backslash\mathcal{T}) + C_{\xi} \widetilde{\eta}_{1}^2(\boldsymbol{\sigma}_{\mathcal{T}}, \mathcal{T}\backslash\mathcal{T}^{\ast}).
\end{align*}
The proof is finished by \eqref{eq:temp11} with $\delta=1$ and the last two inequalites.
\end{proof}

Hence the quasi-orthogonality is achieved from \eqref{eq:temp16}, \eqref{eq:quasiorthogonalityhat} and \eqref{eq:stabilitypostprocess}.
\begin{lemma}[Quasi-orthogonality]
There exists a positive constant $C_3$ depending only on the shape-regularity of the triangulations, the polynomial degree $k$ and the tensor $\mathcal{C}$ such that
\begin{align}
&\left|a(\boldsymbol{\sigma}-\boldsymbol{\sigma}_{\mathcal{T}^{\ast}}, \boldsymbol{\sigma}_{\mathcal{T}}-\boldsymbol{\sigma}_{\mathcal{T}^{\ast}}) \right| \notag\\
\leq &C_3(\max\{C_{\xi}, 1\}\eta_{1}^2(\boldsymbol{\sigma}_{\mathcal{T}}, f, \mathcal{T}\backslash\mathcal{T}^{\ast}) + C_{\xi}^2\|\boldsymbol{\sigma}_{\mathcal{T}^{\ast}}-\boldsymbol{\sigma}_{\mathcal{T}}\|_{\mathcal{C}}^2)^{1/2}\|\boldsymbol{\sigma}-\boldsymbol{\sigma}_{\mathcal{T}^{\ast}}\|_{\mathcal{C}}. \label{eq:quasiorthogonality}
\end{align}
\end{lemma}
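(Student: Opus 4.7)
The plan is to chain together the splitting \eqref{eq:temp16} with the two quantitative ingredients \eqref{eq:quasiorthogonalityhat} and \eqref{eq:stabilitypostprocess} already established above. Once those are in hand, the argument reduces to a short bookkeeping step relying on the trivial comparisons $\mathrm{osc}(f,\mathcal{T}\setminus\mathcal{T}^{\ast})\le\eta_{1}(\boldsymbol{\sigma}_{\mathcal{T}},f,\mathcal{T}\setminus\mathcal{T}^{\ast})$ and $\widetilde{\eta}_{1}\le\eta_{1}$, both of which follow from the definition $\eta_{1}^{2}=\mathrm{osc}^{2}+\widetilde{\eta}_{1}^{2}$.

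Concretely, I would first invoke \eqref{eq:temp16} to split the target quantity into the "postprocessed part" $|a(\boldsymbol{\sigma}-\boldsymbol{\sigma}_{\mathcal{T}^{\ast}},\widehat{\boldsymbol{\sigma}}_{\mathcal{T}}-\widehat{\boldsymbol{\sigma}}_{\mathcal{T}^{\ast}})|$ and the "postprocessing discrepancy part" $\|\boldsymbol{\sigma}-\boldsymbol{\sigma}_{\mathcal{T}^{\ast}}\|_{\mathcal{C}}\|(\widehat{\boldsymbol{\sigma}}_{\mathcal{T}^{\ast}}-\boldsymbol{\sigma}_{\mathcal{T}^{\ast}})-(\widehat{\boldsymbol{\sigma}}_{\mathcal{T}}-\boldsymbol{\sigma}_{\mathcal{T}})\|_{\mathcal{C}}$. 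To the first piece I apply \eqref{eq:quasiorthogonalityhat} and use $\mathrm{osc}\le\eta_{1}$ to conclude that it is bounded by a constant multiple of $\eta_{1}(\boldsymbol{\sigma}_{\mathcal{T}},f,\mathcal{T}\setminus\mathcal{T}^{\ast})\|\boldsymbol{\sigma}-\boldsymbol{\sigma}_{\mathcal{T}^{\ast}}\|_{\mathcal{C}}$. To the second piece I apply \eqref{eq:stabilitypostprocess} and replace $\widetilde{\eta}_{1}$ by $\eta_{1}$, obtaining a bound by $\|\boldsymbol{\sigma}-\boldsymbol{\sigma}_{\mathcal{T}^{\ast}}\|_{\mathcal{C}}\bigl(C_{\xi}^{2}\|\boldsymbol{\sigma}_{\mathcal{T}^{\ast}}-\boldsymbol{\sigma}_{\mathcal{T}}\|_{\mathcal{C}}^{2}+C_{\xi}\eta_{1}^{2}(\boldsymbol{\sigma}_{\mathcal{T}},f,\mathcal{T}\setminus\mathcal{T}^{\ast})\bigr)^{1/2}$.

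Finally, I would factor out $\|\boldsymbol{\sigma}-\boldsymbol{\sigma}_{\mathcal{T}^{\ast}}\|_{\mathcal{C}}$ from the two bounds and merge the remaining factors using the elementary inequality $a+b\le\sqrt{2(a^{2}+b^{2})}$ together with $1+C_{\xi}\le 2\max\{C_{\xi},1\}$. This collapses the two $\eta_{1}^{2}$-type contributions (one weighted by $1$ from the first piece, one weighted by $C_{\xi}$ from the second) into a single $\max\{C_{\xi},1\}\eta_{1}^{2}$ term, while the $C_{\xi}^{2}\|\boldsymbol{\sigma}_{\mathcal{T}^{\ast}}-\boldsymbol{\sigma}_{\mathcal{T}}\|_{\mathcal{C}}^{2}$ summand is already in the required form, yielding exactly \eqref{eq:quasiorthogonality}.

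The main obstacle here is purely cosmetic: keeping track of which summands carry a $\sqrt{C_{\xi}}$ factor and which carry $C_{\xi}$, so that the final prefactor is the sharper $\max\{C_{\xi},1\}$ on $\eta_{1}^{2}$ and $C_{\xi}^{2}$ on $\|\boldsymbol{\sigma}_{\mathcal{T}^{\ast}}-\boldsymbol{\sigma}_{\mathcal{T}}\|_{\mathcal{C}}^{2}$, rather than a cruder combined weight. All substantive work, namely the discrete Helmholtz decomposition of $\widehat{\boldsymbol{\sigma}}_{\mathcal{T}}-\widehat{\boldsymbol{\sigma}}_{\mathcal{T}^{\ast}}\in\boldsymbol{\Sigma}_{\mathcal{T}^{\ast}}^{\textsf{HHJ}}$, the identity $b_{\mathcal{T}^{\ast}}=b_{\mathcal{T}}$ on $V_{\mathcal{T}^{\ast}}\times\boldsymbol{\Sigma}_{\mathcal{T}}$, and the scaling arguments behind \eqref{eq:Mn1}--\eqref{eq:Mn3}, has been absorbed into the three preceding lemmas, so no further analysis is needed.
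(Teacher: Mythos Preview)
Your proposal is correct and follows exactly the paper's approach: the paper's proof is the single sentence that the quasi-orthogonality is achieved from \eqref{eq:temp16}, \eqref{eq:quasiorthogonalityhat} and \eqref{eq:stabilitypostprocess}, which is precisely the chaining you describe. The bookkeeping with $\max\{C_{\xi},1\}$ that you spell out is left implicit in the paper.
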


\section{Convergence of the AHCDGM}
The target of this section is to design an adaptive hybridizable $C^0$ discontinuous Galerkin method and show its convergence.

Based on the error estimator in \eqref{eq:errorestimator}, an adaptive hybridizable $C^0$ discontinuous Galerkin method (AHCDGM) using D$\mathrm{\ddot{o}}$rfler marking strategy (cf. \cite{Dorfler1996}) for problem~\eqref{problem1} is presented in Algorithm~\ref{alg:ahcdg}.
\begin{algorithm}[htbp]
\caption{Adaptive hybridizable $C^0$ discontinuous Galerkin method.}
\label{alg:ahcdg}
\begin{algorithmic}[1]
\STATE Given a parameter $0<\theta<1$ and an initial mesh
$\mathcal{T}_{0}$. Set $m:=0$.
\STATE \emph{\textsf{(SOLVE)}} Solve the HCDG method \eqref{hcdg1}-\eqref{hcdg4} on $\mathcal{T}_{m}$ for
    the discrete solution $(\boldsymbol{\sigma}_{m},u_{m}, \lambda_{m})\in \boldsymbol{\Sigma}_{\mathcal{T}_m}\times V_{\mathcal{T}_m}\times M_{\mathcal{T}_m}$. \label{algo:solve}
\STATE \emph{\textsf{(ESTIMATE)}} Compute the error indicator $\eta^{2}(\boldsymbol{\sigma}_{m},f, \mathcal{T}_{m})$ defined in \eqref{eq:errorestimator}.
\STATE \emph{\textsf{(MARK)}} Mark a set $\mathcal{M}_{m}\subset\mathcal{T}_{m}$ with minimal cardinality such that
    \begin{equation}\label{amfemmarking}
        \eta^{2}(\boldsymbol{\sigma}_{m},f,\mathcal{M}_{m})\geq\theta\eta^{2}(\boldsymbol{\sigma}_{m},f, \mathcal{T}_{m}).
      \end{equation}
\STATE \emph{\textsf{(REFINE)}} Refine each triangle $K$ in $\mathcal{M}_{m}$ by the newest vertex bisection to get
    $\mathcal{T}_{m+1}$.
\STATE Set $m:=m+1$ and go to Step \ref{algo:solve}.
\end{algorithmic}
\end{algorithm}

Owing to the newest vertex bisection,
the shape regularity of $\{\mathcal{T}_{m}\}$ generated by Algorithm~\ref{alg:ahcdg} only depends on the initial mesh
    $\mathcal{T}_{0}$(cf. \cite{BinevDahmenDeVore2004, Maubach1995, Stevenson2008, Traxler1997}).
If the initial mesh $\mathcal{T}_{0}$ satisfies the condition
    (b) in section 4 of \cite{Stevenson2008}, then
\begin{equation}\label{stevensonlemma}
  \#\mathcal{T}_{m}-\#\mathcal{T}_{0}\lesssim \sum_{j=0}^{m-1}\#\mathcal{M}_{j}.
\end{equation}

The relations of the error estimators over two consecutive meshes are exhibited in the next lemma.
\begin{lemma}[estimator reduction]
For any $\delta>0$, there exists a positive constant $C_4$ depending only on the shape-regularity of the triangulations, the polynomial degree $k$ and the tensor $\mathcal{C}$ such that
\begin{align}
\eta_{1}^2(\boldsymbol{\sigma}_{\mathcal{T}^{\ast}}, f, \mathcal{T}^{\ast})\leq &(1+\delta)(\eta_{1}^2(\boldsymbol{\sigma}_{\mathcal{T}}, f, \mathcal{T})-\Lambda\eta_{1}^2(\boldsymbol{\sigma}_{\mathcal{T}}, f, \mathcal{T}\backslash\mathcal{T}^{\ast})) \notag\\
&+ (1+\delta^{-1})C_aC_{\xi}\|\boldsymbol{\sigma}_{\mathcal{T}^{\ast}}-\boldsymbol{\sigma}_{\mathcal{T}}\|_{\mathcal{C}}^2, \label{eq:errorestimatorreduction1}
\end{align}
\begin{align}
\eta_{2}^2(\boldsymbol{\sigma}_{\mathcal{T}^{\ast}}, \mathcal{T}^{\ast})\leq &(1+\delta)(\eta_{2}^2(\boldsymbol{\sigma}_{\mathcal{T}},\mathcal{T})-(1-2^{-1/2})\eta_{2}^2(\boldsymbol{\sigma}_{\mathcal{T}}, \mathcal{T}\backslash\mathcal{T}^{\ast})) \notag\\
&+ (1+\delta^{-1})C_4\|\boldsymbol{\sigma}_{\mathcal{T}^{\ast}}-\boldsymbol{\sigma}_{\mathcal{T}}\|_{\mathcal{C}}^2, \label{eq:errorestimatorreduction2}
\end{align}
\begin{align}
\eta^2(\boldsymbol{\sigma}_{\mathcal{T}^{\ast}}, f, \mathcal{T}^{\ast})\leq &(1+\delta)(\eta^2(\boldsymbol{\sigma}_{\mathcal{T}}, f, \mathcal{T})-\Lambda\eta^2(\boldsymbol{\sigma}_{\mathcal{T}}, f, \mathcal{T}\backslash\mathcal{T}^{\ast})) \notag\\
&+ (1+\delta^{-1})(C_aC_{\xi}+C_4)\|\boldsymbol{\sigma}_{\mathcal{T}^{\ast}}-\boldsymbol{\sigma}_{\mathcal{T}}\|_{\mathcal{C}}^2 \label{eq:errorestimatorreduction}
\end{align}
with $\Lambda:=1-\max\{2^{-1/2}, 2^{-(1+\gamma)/2}\}$ and $C_a:=\sup\limits_{\boldsymbol{\tau}\in\boldsymbol{L}^2(\Omega, \mathbb{S})}\frac{\|\mathcal{C}\boldsymbol{\tau}\|_0^2}{a(\boldsymbol{\tau},\boldsymbol{\tau})}$.
\end{lemma}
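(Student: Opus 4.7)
The plan is to prove the three inequalities separately by splitting each estimator as the sum of contributions from the unrefined elements $\mathcal{T} \cap \mathcal{T}^{\ast}$ and the refined part $\mathcal{T}^{\ast}\setminus\mathcal{T}$, and then comparing each piece to its analogue on $\mathcal{T}$. Throughout, a Young-type identity $(a+b)^{2} \leq (1+\delta) a^{2} + (1+\delta^{-1}) b^{2}$ will account for the discrepancy between $\boldsymbol{\sigma}_{\mathcal{T}^{\ast}}$ and $\boldsymbol{\sigma}_{\mathcal{T}}$, while the reduction factors $2^{-(1+\gamma)/2}$, $2^{-1/2}$, $2^{-2}$ come from the geometric fact $h_{K'} \leq h_K/\sqrt{2}$ for any child $K' \in \mathcal{T}^{\ast}(K)$ of a bisected element $K \in \mathcal{T}\setminus\mathcal{T}^{\ast}$. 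The constant $C_a$ will enter only at the very end through $\|\mathcal{C}\boldsymbol{\tau}\|_{0}^{2} \leq C_{a}\|\boldsymbol{\tau}\|_{\mathcal{C}}^{2}$.

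For \eqref{eq:errorestimatorreduction1}, the oscillation part is the easiest: on any $K \in \mathcal{T}\setminus\mathcal{T}^{\ast}$ with children $K' \subset K$, using that $Q_{K'}^{k-3}$ is the $L^{2}$-best approximation and $h_{K'}^{4} \leq h_K^{4}/4$, I sum over the children to get $\sum_{K'\subset K} h_{K'}^{4}\|f-Q_{K'}^{k-3}f\|_{0,K'}^{2} \leq 2^{-2} h_K^{4}\|f-Q_K^{k-3}f\|_{0,K}^{2}$; on $\mathcal{T}\cap\mathcal{T}^{\ast}$ the contribution is unchanged. The stabilization part $\widetilde{\eta}_1^2(\boldsymbol{\sigma}_{\mathcal{T}^{\ast}}, \mathcal{T}^{\ast}\setminus\mathcal{T})$ is already handled by \eqref{eq:temp11}; on $\mathcal{T}\cap\mathcal{T}^{\ast}$ I use Young's inequality together with the $L^{2}$-projection property and $h_K\xi_K \leq C_{\xi}$ to obtain $\widetilde{\eta}_1^2(\boldsymbol{\sigma}_{\mathcal{T}^{\ast}}, \mathcal{T}\cap\mathcal{T}^{\ast}) \leq (1+\delta)\widetilde{\eta}_1^2(\boldsymbol{\sigma}_{\mathcal{T}}, \mathcal{T}\cap\mathcal{T}^{\ast}) + (1+\delta^{-1}) C_{\xi}\|\mathcal{C}(\boldsymbol{\sigma}_{\mathcal{T}^{\ast}}-\boldsymbol{\sigma}_{\mathcal{T}})\|_{0,\mathcal{T}\cap\mathcal{T}^{\ast}}^{2}$. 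Adding everything and noting that $1 - \max\{2^{-1/2}, 2^{-(1+\gamma)/2}\} \leq \min\{3/4,\, 1-2^{-(1+\gamma)/2}\}$, I bound the reduction with the common constant $\Lambda$ and finally apply $\|\mathcal{C}\cdot\|_{0}^{2} \leq C_{a}\|\cdot\|_{\mathcal{C}}^{2}$.

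For \eqref{eq:errorestimatorreduction2}, I treat $\eta_{2}^2$ similarly but the edge jump term is the main obstacle. The interior $\mathbf{rot}$-term is straightforward: on unrefined $K$ apply Young's inequality plus the inverse estimate $h_K^{2}\|\mathbf{rot}(\mathcal{C}\boldsymbol{\tau})\|_{0,K}^{2} \lesssim \|\mathcal{C}\boldsymbol{\tau}\|_{0,K}^{2}$ for polynomial $\boldsymbol{\tau}$, and on refined $K$ combine this with $h_{K'}^{2} \leq h_K^{2}/2$. The edge jump term must be decomposed into three cases: (i) edges of $\mathcal{T}^{\ast}$ lying in the interior of some refined $K \in \mathcal{T}\setminus\mathcal{T}^{\ast}$, where $\mathcal{C}\boldsymbol{\sigma}_{\mathcal{T}}$ has no jump, so $[(\mathcal{C}\boldsymbol{\sigma}_{\mathcal{T}^{\ast}})\boldsymbol{t}_e]$ equals the jump of $\mathcal{C}(\boldsymbol{\sigma}_{\mathcal{T}^{\ast}}-\boldsymbol{\sigma}_{\mathcal{T}})$ and the trace/inverse inequality controls it by $\|\mathcal{C}(\boldsymbol{\sigma}_{\mathcal{T}^{\ast}}-\boldsymbol{\sigma}_{\mathcal{T}})\|_{0,K}^{2}$; (ii) edges of $\mathcal{T}^{\ast}$ lying on an original edge $e \in \mathcal{E}(\mathcal{T})$ of $\mathcal{T}\setminus\mathcal{T}^{\ast}$, which are split into smaller pieces: writing $[\mathcal{C}\boldsymbol{\sigma}_{\mathcal{T}^{\ast}}\boldsymbol{t}_e] = [\mathcal{C}\boldsymbol{\sigma}_{\mathcal{T}}\boldsymbol{t}_e] + [\mathcal{C}(\boldsymbol{\sigma}_{\mathcal{T}^{\ast}}-\boldsymbol{\sigma}_{\mathcal{T}})\boldsymbol{t}_e]$ and summing the $L^{2}$-norms on the pieces recover the jump of $\mathcal{C}\boldsymbol{\sigma}_{\mathcal{T}}$ on $e$; (iii) edges on $\partial K$ for $K \in \mathcal{T}\cap\mathcal{T}^{\ast}$ whose neighbour in $\mathcal{T}^{\ast}$ has been refined, which are treated in the same way as (ii). Collecting, the reductions across all terms are at most $2^{-1/2}$, yielding the announced factor $(1-2^{-1/2})$, and the discrepancy is absorbed into $(1+\delta^{-1}) C_4 \|\boldsymbol{\sigma}_{\mathcal{T}^{\ast}}-\boldsymbol{\sigma}_{\mathcal{T}}\|_{\mathcal{C}}^{2}$ after applying the inverse estimates and $\|\mathcal{C}\cdot\|_{0} \lesssim \|\cdot\|_{\mathcal{C}}$.

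Finally, \eqref{eq:errorestimatorreduction} follows by simply adding \eqref{eq:errorestimatorreduction1} and \eqref{eq:errorestimatorreduction2}, noting that $\Lambda \leq 1-2^{-1/2}$ so the same $\Lambda$ works as a lower bound for both reduction factors. I anticipate the trickiest bookkeeping to be in case (ii) above, where the edges of $\mathcal{T}$ are split and one must keep track of how the trace norm on a full edge $e$ relates to the sum of trace norms on its sub-pieces in $\mathcal{T}^{\ast}$; this is handled by noting that $\boldsymbol{\sigma}_{\mathcal{T}}$ is polynomial on the two elements sharing $e$, so the $\mathcal{T}$-jump identifies with the $\mathcal{T}^{\ast}$-jump of the same quantity, and the remaining polynomial difference $\boldsymbol{\sigma}_{\mathcal{T}^{\ast}}-\boldsymbol{\sigma}_{\mathcal{T}}$ is controlled by the trace inequality on each sub-triangle.
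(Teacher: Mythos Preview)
Your proposal is correct and follows essentially the same route as the paper. For \eqref{eq:errorestimatorreduction1} the paper proceeds exactly as you outline: the oscillation reduction via $h_{K'}^{4}\le h_K^{4}/4$, the treatment of $\widetilde\eta_1^2$ on $\mathcal{T}\cap\mathcal{T}^{\ast}$ by Young's inequality and the projection property, the refined part by \eqref{eq:temp11}, and then the passage to the common factor $\Lambda$ together with $\|\mathcal{C}\cdot\|_0^2\le C_a\|\cdot\|_{\mathcal{C}}^2$. For \eqref{eq:errorestimatorreduction2} the paper does not write out the argument but cites (5.4) of \cite{HuangHuangXu2011}; your explicit element/edge decomposition with inverse and trace estimates is precisely the standard proof behind that reference. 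The sum \eqref{eq:errorestimatorreduction} is obtained identically by adding the two and using $\Lambda\le 1-2^{-1/2}$.
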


\begin{proof}
\eqref{eq:errorestimatorreduction2} is just (5.4) in \cite{HuangHuangXu2011}. \eqref{eq:errorestimatorreduction} follows immediately from \eqref{eq:errorestimatorreduction1}-\eqref{eq:errorestimatorreduction2}. Next we show the proof of \eqref{eq:errorestimatorreduction1}.
Using the similar argument as in the proof of \eqref{eq:temp11}, it follows
\[
\mathrm{osc}^2(f, \mathcal{T}^{\ast}\backslash\mathcal{T})\leq\frac{1}{4}\mathrm{osc}^2(f, \mathcal{T}\backslash\mathcal{T}^{\ast}),
\]
which implies
\begin{align}
\mathrm{osc}^2(f, \mathcal{T}^{\ast})=&\mathrm{osc}^2(f, \mathcal{T}\cap\mathcal{T}^{\ast})+\mathrm{osc}^2(f, \mathcal{T}^{\ast}\backslash\mathcal{T}) \notag\\
\leq& \mathrm{osc}^2(f, \mathcal{T}\cap\mathcal{T}^{\ast})+\frac{1}{4}\mathrm{osc}^2(f, \mathcal{T}\backslash\mathcal{T}^{\ast})=\mathrm{osc}^2(f, \mathcal{T})-\frac{3}{4}\mathrm{osc}^2(f, \mathcal{T}\backslash\mathcal{T}^{\ast}). \label{eq:temposc}
\end{align}
From the triangle inequality, the definition of $L^2$-orthogonal projection $\boldsymbol{Q}_{K}^{k-2}$ and the Young's inequality,
\begin{align*}
&\widetilde{\eta}_{1}^2(\boldsymbol{\sigma}_{\mathcal{T}^{\ast}}, \mathcal{T}\cap\mathcal{T}^{\ast}) \\
\leq& \sum_{K\in\mathcal{T}\cap\mathcal{T}^{\ast}}h_K\xi_K
\left(\|\mathcal{C}\boldsymbol{\sigma}_{\mathcal{T}} - \boldsymbol{Q}_K^{k-2}(\mathcal{C}\boldsymbol{\sigma}_{\mathcal{T}})\|_{0,K}+\|\mathcal{C}(\boldsymbol{\sigma}_{\mathcal{T}^{\ast}}-\boldsymbol{\sigma}_{\mathcal{T}})\|_{0,K}\right)^2 \\
\leq &(1+\delta)\widetilde{\eta}_{1}^2(\boldsymbol{\sigma}_{\mathcal{T}}, \mathcal{T}\cap\mathcal{T}^{\ast}) +(1+\delta^{-1})C_{\xi}\sum_{K\in\mathcal{T}\cap\mathcal{T}^{\ast}}\|\mathcal{C}(\boldsymbol{\sigma}_{\mathcal{T}^{\ast}}-\boldsymbol{\sigma}_{\mathcal{T}})\|_{0,K}^2.
\end{align*}
Then we get from \eqref{eq:temp11} 
\begin{align}
\widetilde{\eta}_{1}^2(\boldsymbol{\sigma}_{\mathcal{T}^{\ast}},\mathcal{T}^{\ast})
\leq & (1+\delta) \left(\widetilde{\eta}_{1}^2(\boldsymbol{\sigma}_{\mathcal{T}},\mathcal{T}) - (1-2^{-(1+\gamma)/2})\widetilde{\eta}_{1}^2(\boldsymbol{\sigma}_{\mathcal{T}}, \mathcal{T}\backslash\mathcal{T}^{\ast})\right) \notag\\
&+(1+\delta^{-1})C_{\xi}\|\mathcal{C}(\boldsymbol{\sigma}_{\mathcal{T}^{\ast}}-\boldsymbol{\sigma}_{\mathcal{T}})\|_{0}^2. \label{eq:temp12}
\end{align}
Therefore \eqref{eq:errorestimatorreduction1} is the result of \eqref{eq:temposc}-\eqref{eq:temp12} and the definition of $\eta_{1}^2(\boldsymbol{\sigma}_{\mathcal{T}^{\ast}}, f, \mathcal{T}^{\ast})$.
\end{proof}

Now we show the main result of this section, i.e. the contraction of the quasi-error for the AHCDGM.
\begin{theorem}
There exist positive constants $\alpha<1$, $\beta_1$, $\beta_2$ and $C_{\xi1}^{\ast}$ depending only on the shape-regularity of the triangulations, the polynomial degree $k$ and the tensor $\mathcal{C}$ such that if $C_{\xi}\leq C_{\xi1}^{\ast}$, then
\begin{align}
&\|\boldsymbol{\sigma}-\boldsymbol{\sigma}_{m+1}\|_{\mathcal{C}}^2 + \beta_1\eta_{1}^2(\boldsymbol{\sigma}_{m+1}, f, \mathcal{T}_{m+1}) + \beta_2\eta^2(\boldsymbol{\sigma}_{m+1}, f, \mathcal{T}_{m+1}) \notag\\
\leq &\alpha\left(\|\boldsymbol{\sigma}-\boldsymbol{\sigma}_{m}\|_{\mathcal{C}}^2 + \beta_1\eta_{1}^2(\boldsymbol{\sigma}_{m}, f, \mathcal{T}_{m}) + \beta_2\eta^2(\boldsymbol{\sigma}_{m}, f, \mathcal{T}_{m})\right). \label{ahcdgconvergence}
\end{align}
\end{theorem}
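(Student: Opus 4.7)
My plan is to follow the contraction scheme of Casc\'on--Kreuzer--Nochetto--Siebert, adapted to the hybridizable setting by combining four ingredients: (a) the Pythagorean expansion of the energy error, (b) the quasi-orthogonality \eqref{eq:quasiorthogonality}, (c) the two estimator reductions \eqref{eq:errorestimatorreduction1}--\eqref{eq:errorestimatorreduction2} together with the D\"orfler marking \eqref{amfemmarking}, and (d) the reliability \eqref{eq:posterioriestimateupperbound}.

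First I would expand
$$\|\boldsymbol{\sigma}-\boldsymbol{\sigma}_m\|_{\mathcal{C}}^2 = \|\boldsymbol{\sigma}-\boldsymbol{\sigma}_{m+1}\|_{\mathcal{C}}^2 + 2a(\boldsymbol{\sigma}-\boldsymbol{\sigma}_{m+1},\boldsymbol{\sigma}_{m+1}-\boldsymbol{\sigma}_m) + \|\boldsymbol{\sigma}_{m+1}-\boldsymbol{\sigma}_m\|_{\mathcal{C}}^2,$$
estimate the cross term by \eqref{eq:quasiorthogonality}, and apply Young's inequality with a small parameter $\mu\in(0,1)$. Dividing by $1-\mu$ produces
$$\|\boldsymbol{\sigma}-\boldsymbol{\sigma}_{m+1}\|_{\mathcal{C}}^2 \leq \tfrac{1}{1-\mu}\|\boldsymbol{\sigma}-\boldsymbol{\sigma}_m\|_{\mathcal{C}}^2 - \tfrac{1-C_3^2C_\xi^2/\mu}{1-\mu}\|\boldsymbol{\sigma}_m-\boldsymbol{\sigma}_{m+1}\|_{\mathcal{C}}^2 + \tfrac{C_3^2\max\{C_\xi,1\}}{\mu(1-\mu)}\eta_1^2(\boldsymbol{\sigma}_m,f,\mathcal{T}_m\setminus\mathcal{T}_{m+1}).$$
Next I add $\beta_1\eta_1^2(\boldsymbol{\sigma}_{m+1},f,\mathcal{T}_{m+1})+\beta_2\eta^2(\boldsymbol{\sigma}_{m+1},f,\mathcal{T}_{m+1})$ to both sides, bounding the right-hand side via \eqref{eq:errorestimatorreduction1} and \eqref{eq:errorestimatorreduction2} (with a small parameter $\delta>0$). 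Four clusters of terms must then be tamed: (i) the $\eta_1^2(\mathcal{T}_m\setminus\mathcal{T}_{m+1})$ residue from quasi-orthogonality is absorbed by the negative $-\beta_1(1+\delta)\Lambda\,\eta_1^2(\mathcal{T}_m\setminus\mathcal{T}_{m+1})$ from \eqref{eq:errorestimatorreduction1} by choosing $\beta_1\geq \tfrac{C_3^2\max\{C_\xi,1\}}{\mu(1-\mu)(1+\delta)\Lambda}$; (ii) the $\|\boldsymbol{\sigma}_{m+1}-\boldsymbol{\sigma}_m\|_{\mathcal{C}}^2$ coefficient is driven non-positive by taking $C_\xi$ small enough that the Pythagoras gain $(1-C_3^2C_\xi^2/\mu)/(1-\mu)$ dominates $\beta_1(1+\delta^{-1})C_aC_\xi+\beta_2(1+\delta^{-1})(C_aC_\xi+C_4)$; (iii) the $\eta^2(\mathcal{T}_m\setminus\mathcal{T}_{m+1})$ reduction term is converted into a genuine contraction factor through D\"orfler, producing $\beta_2(1+\delta)(1-\Lambda\theta)\eta^2(\boldsymbol{\sigma}_m,f,\mathcal{T}_m)$; (iv) the $\tfrac{1}{1-\mu}\|\boldsymbol{\sigma}-\boldsymbol{\sigma}_m\|_{\mathcal{C}}^2$ term is split by reliability \eqref{eq:posterioriestimateupperbound} as $\alpha_0\|\boldsymbol{\sigma}-\boldsymbol{\sigma}_m\|_{\mathcal{C}}^2 + (\tfrac{1}{1-\mu}-\alpha_0)C_1\eta^2(\boldsymbol{\sigma}_m,f,\mathcal{T}_m)$ with $\alpha_0<1$.

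The result has the form $\alpha_0\|\boldsymbol{\sigma}-\boldsymbol{\sigma}_m\|_{\mathcal{C}}^2 + \beta_1(1+\delta)\eta_1^2 + [\beta_2(1+\delta)(1-\Lambda\theta)+(\tfrac{1}{1-\mu}-\alpha_0)C_1]\eta^2$. Since $(1+\delta)>1$, the $\beta_1\eta_1^2$ slot does not shrink on its own; to handle this I use $\eta_1^2\leq\eta^2$ to rewrite $\beta_1(1+\delta)\eta_1^2 = \alpha\beta_1\eta_1^2 + \beta_1(1+\delta-\alpha)\eta_1^2 \leq \alpha\beta_1\eta_1^2 + \beta_1(1+\delta-\alpha)\eta^2$, folding the surplus into the $\eta^2$ coefficient. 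A contraction with common factor $\alpha<1$ then reduces to the algebraic condition
$$\alpha \geq \frac{\tfrac{C_1}{1-\mu} + \beta_1(1+\delta) + \beta_2(1+\delta)(1-\Lambda\theta)}{\beta_2 + \beta_1 + C_1},$$
whose right-hand side tends to $(1+\delta)(1-\Lambda\theta)$ as $\beta_2\to\infty$. Choosing $\delta$ so that $(1+\delta)(1-\Lambda\theta)<1$ and $\beta_2$ sufficiently large relative to $\beta_1$ and $C_1$ yields a feasible $\alpha\in(0,1)$.

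The main technical obstacle is the bookkeeping of the interlocked parameter chain $\mu,\delta,\alpha_0,\beta_1,\beta_2,C_\xi$: $\beta_1$ is forced to be large by the absorption in step (i); $\beta_2$ must then dominate $\beta_1$ for the $\eta^2$-contraction; and the threshold $C_{\xi1}^*$ is pinned down by step (ii), where the coefficient controlling $\|\boldsymbol{\sigma}_{m+1}-\boldsymbol{\sigma}_m\|_{\mathcal{C}}^2$ scales with $\beta_1+\beta_2$ and must be beaten by the Pythagoras gain. Verifying that a consistent choice exists, and unwinding the dependencies to obtain an explicit $C_{\xi1}^*$ depending only on the shape regularity, $k$, and $\mathcal{C}$, is the technical crux.
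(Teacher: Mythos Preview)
Your overall architecture is correct and matches the paper's proof: Pythagorean identity, quasi-orthogonality \eqref{eq:quasiorthogonality} with Young's inequality, the two estimator reductions, D\"orfler marking, and reliability. However, your parameter-selection strategy contains a genuine inconsistency that cannot be repaired within the scheme you propose.

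The problem is the interaction between your step (ii) and your plan to take $\beta_2$ large. In (ii) you require the Pythagoras gain $(1-C_3^2C_\xi^2/\mu)/(1-\mu)\leq 1/(1-\mu)$ to dominate $\beta_1(1+\delta^{-1})C_aC_\xi+\beta_2(1+\delta^{-1})(C_aC_\xi+C_4)$. But the term $\beta_2(1+\delta^{-1})C_4$ does \emph{not} vanish as $C_\xi\to 0$; it forces the upper bound $\beta_2\leq \frac{\delta}{(1-\mu)(1+\delta)C_4}$ regardless of the stabilization. On the other hand, step (i) forces $\beta_1\geq \frac{C_3^2}{\mu(1-\mu)(1+\delta)\Lambda}$, so $\beta_2/\beta_1\leq \delta\mu\Lambda/(C_3^2C_4)$, a bounded quantity. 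Your contraction criterion requires $\frac{C_1\mu}{1-\mu}+\beta_1\delta<\beta_2(\Lambda\theta-\delta(1-\Lambda\theta))$, and substituting the bounds above this becomes (roughly) $\mu\gtrsim C_3^2C_4/(\Lambda^2\theta)$, which need not lie in $(0,1)$. Sending $\beta_2\to\infty$ as you suggest is simply incompatible with (ii).

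The paper resolves this by going in the \emph{opposite} direction: it fixes $\beta_2$ \emph{small}, namely $\beta_2=\frac{\delta_2}{2(1-\varepsilon)(1+\delta_2)(C_a+C_4)}$, so that $\beta_2$'s contribution to the $\|\boldsymbol{\sigma}_{m+1}-\boldsymbol{\sigma}_m\|_{\mathcal{C}}^2$ coefficient exactly consumes the leftover half of the Pythagoras gain. Contraction is then obtained not by making $\beta_2$ dominate, but by introducing a \emph{second} Young parameter $\delta_1$ (independent of $\delta_2$) in the $\eta_1^2$-reduction, and taking $\varepsilon$ and $\delta_1$ extremely small. The tiny excesses $\frac{2\varepsilon}{1-\varepsilon}\|\boldsymbol{\sigma}-\boldsymbol{\sigma}_m\|_{\mathcal{C}}^2$ and $2\beta_1\delta_1\eta_1^2$ are each transferred (via reliability and $\eta_1^2\leq\eta^2$) into the fixed budget $\frac{1}{3}\beta_2(1+\delta_2)\Lambda\theta\,\eta^2$. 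The order of choice is $\delta_2\to\varepsilon\to\beta_2\to\delta_1\to\beta_1\to C_{\xi1}^\ast$, which is acyclic. Your single $\delta$ and the ``$\beta_2$ large'' heuristic should be replaced by this mechanism.
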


\begin{proof}
Let $\varepsilon\leq\frac{1}{2}$, $\delta_1\leq 1$ and $\delta_2\leq 1$ be three yet-to-be-determined positive constants.
Set $C_{\xi1}^{\ast}=\min\left\{\frac{\Lambda\delta_1\varepsilon}{2C_3^2(\Lambda\delta_1+C_a)}, 1\right\}$.
Due to quasi-orthogonality \eqref{eq:quasiorthogonality} with $\mathcal{T}=\mathcal{T}_{m}$ and $\mathcal{T}^{\ast}=\mathcal{T}_{m+1}$ and the Young's inequality, it holds
\begin{align*}
&\|\boldsymbol{\sigma}-\boldsymbol{\sigma}_{m+1}\|_{\mathcal{C}}^2+\|\boldsymbol{\sigma}_{m}-\boldsymbol{\sigma}_{m+1}\|_{\mathcal{C}}^2 \\
=&\|\boldsymbol{\sigma}-\boldsymbol{\sigma}_{m}\|_{\mathcal{C}}^2+2a(\boldsymbol{\sigma}-\boldsymbol{\sigma}_{m+1},\boldsymbol{\sigma}_{m}-\boldsymbol{\sigma}_{m+1}) \\
\leq & \|\boldsymbol{\sigma}-\boldsymbol{\sigma}_{m}\|_{\mathcal{C}}^2 +2C_3(\eta_{1}^2(\boldsymbol{\sigma}_m, f, \mathcal{T}_{m}\backslash\mathcal{T}_{m+1}) + C_{\xi}^2\|\boldsymbol{\sigma}_{m+1}-\boldsymbol{\sigma}_{m}\|_{\mathcal{C}}^2 )^{1/2}\|\boldsymbol{\sigma}-\boldsymbol{\sigma}_{m+1}\|_{\mathcal{C}} \\
\leq & \|\boldsymbol{\sigma}-\boldsymbol{\sigma}_{m}\|_{\mathcal{C}}^2+\varepsilon\|\boldsymbol{\sigma}-\boldsymbol{\sigma}_{m+1}\|_{\mathcal{C}}^2 +\frac{C_3^2}{\varepsilon}(\eta_{1}^2(\boldsymbol{\sigma}_m, f, \mathcal{T}_{m}\backslash\mathcal{T}_{m+1}) + C_{\xi}^2\|\boldsymbol{\sigma}_{m+1}-\boldsymbol{\sigma}_{m}\|_{\mathcal{C}}^2 ).
\end{align*}
Hence by direct manipulation,
\begin{align*}
\|\boldsymbol{\sigma}-\boldsymbol{\sigma}_{m+1}\|_{\mathcal{C}}^2\leq& \frac{1}{1-\varepsilon}\|\boldsymbol{\sigma}-\boldsymbol{\sigma}_{m}\|_{\mathcal{C}}^2 + \beta_1\Lambda(1+\delta_1)\eta_{1}^2(\boldsymbol{\sigma}_m, f, \mathcal{T}_{m}\backslash\mathcal{T}_{m+1}) \\
&-\frac{1}{1-\varepsilon}\left(1-\frac{C_3^2C_{\xi}^2}{\varepsilon}\right)\|\boldsymbol{\sigma}_{m+1}-\boldsymbol{\sigma}_{m}\|_{\mathcal{C}}^2
\end{align*}
with $\beta_1=\frac{C_3^2}{\Lambda(1+\delta_1)\varepsilon(1-\varepsilon)}$. From \eqref{eq:errorestimatorreduction1} with $\delta=\delta_1$, we have
\begin{align*}
\beta_1\eta_{1}^2(\boldsymbol{\sigma}_{m+1}, f, \mathcal{T}_{m+1})\leq &\beta_1(1+\delta_1)(\eta_{1}^2(\boldsymbol{\sigma}_m, f, \mathcal{T}_{m})-\Lambda\eta_{1}^2(\boldsymbol{\sigma}_m, f, \mathcal{T}_{m}\backslash\mathcal{T}_{m+1})) \notag\\
&+ \frac{C_3^2C_aC_{\xi}}{\Lambda\delta_1\varepsilon(1-\varepsilon)}\|\boldsymbol{\sigma}_{m+1}-\boldsymbol{\sigma}_{m}\|_{\mathcal{C}}^2.
\end{align*}
Then we obtain from the last two inequalities
\begin{align*}
&\|\boldsymbol{\sigma}-\boldsymbol{\sigma}_{m+1}\|_{\mathcal{C}}^2+\beta_1\eta_{1}^2(\boldsymbol{\sigma}_{m+1}, f, \mathcal{T}_{m+1}) \\
\leq& \frac{1}{1-\varepsilon}\|\boldsymbol{\sigma}-\boldsymbol{\sigma}_{m}\|_{\mathcal{C}}^2 + \beta_1(1+\delta_1)\eta_{1}^2(\boldsymbol{\sigma}_m, f, \mathcal{T}_{m}) \\
&-\frac{1}{1-\varepsilon}\left(1-\frac{C_3^2C_{\xi}(C_{\xi}\Lambda\delta_1+C_a)}{\Lambda\delta_1\varepsilon}\right)\|\boldsymbol{\sigma}_{m+1}-\boldsymbol{\sigma}_{m}\|_{\mathcal{C}}^2.
\end{align*}
By the definition of $C_{\xi1}^{\ast}$, it follows
\begin{align*}
&\|\boldsymbol{\sigma}-\boldsymbol{\sigma}_{m+1}\|_{\mathcal{C}}^2+\beta_1\eta_{1}^2(\boldsymbol{\sigma}_{m+1}, f, \mathcal{T}_{m+1}) \\
\leq& \frac{1}{1-\varepsilon}\|\boldsymbol{\sigma}-\boldsymbol{\sigma}_{m}\|_{\mathcal{C}}^2 + \beta_1(1+\delta_1)\eta_{1}^2(\boldsymbol{\sigma}_m, f, \mathcal{T}_{m}) -\frac{1}{2(1-\varepsilon)}\|\boldsymbol{\sigma}_{m+1}-\boldsymbol{\sigma}_{m}\|_{\mathcal{C}}^2.
\end{align*}
Let $\beta_2=\frac{\delta_2}{2(1-\varepsilon)(1+\delta_2)(C_a+C_4)}$. We get from \eqref{eq:errorestimatorreduction} with $\delta=\delta_2$
\begin{align*}
\beta_2\eta^2(\boldsymbol{\sigma}_{m+1}, f, \mathcal{T}_{m+1})\leq &\beta_2(1+\delta_2)(\eta^2(\boldsymbol{\sigma}_m, f, \mathcal{T}_{m})-\Lambda\eta^2(\boldsymbol{\sigma}_m, f, \mathcal{T}_{m}\backslash\mathcal{T}_{m+1})) \notag\\
&+ \frac{1}{2(1-\varepsilon)}\|\boldsymbol{\sigma}_{m+1}-\boldsymbol{\sigma}_{m}\|_{\mathcal{C}}^2.
\end{align*}
Adding the last two inequalities, it holds from marking strategy \eqref{amfemmarking}
\begin{align*}
&\|\boldsymbol{\sigma}-\boldsymbol{\sigma}_{m+1}\|_{\mathcal{C}}^2+\beta_1\eta_{1}^2(\boldsymbol{\sigma}_{m+1}, f, \mathcal{T}_{m+1}) + \beta_2\eta^2(\boldsymbol{\sigma}_{m+1}, f, \mathcal{T}_{m+1}) \\
\leq& \frac{1}{1-\varepsilon}\|\boldsymbol{\sigma}-\boldsymbol{\sigma}_{m}\|_{\mathcal{C}}^2 + \beta_1(1+\delta_1)\eta_{1}^2(\boldsymbol{\sigma}_m, f, \mathcal{T}_{m}) +\beta_2(1+\delta_2)(1-\Lambda\theta)\eta^2(\boldsymbol{\sigma}_m, f, \mathcal{T}_{m}).
\end{align*}
Now set
\[
\delta_2=\frac{\Lambda\theta}{3},
\; \varepsilon=\min\{\frac{\delta_2\Lambda\theta}{12C_1(C_a+C_4)},\frac{1}{2}\},\;
 \delta_1=\min\{\frac{\beta_2(1+\delta_2)\Lambda^2\theta\varepsilon(1-\varepsilon)}{6C_3^2}, 1\}.
 \]
Then it follows from
the reliability of the error estimator \eqref{eq:posterioriestimateupperbound} on $\mathcal{T}_{m}$ and the definitions of $\varepsilon$ and $\beta_2$
\[
\frac{2\varepsilon}{1-\varepsilon}\|\boldsymbol{\sigma}-\boldsymbol{\sigma}_{m}\|_{\mathcal{C}}^2\leq\frac{2\varepsilon C_1}{1-\varepsilon}\eta^2(\boldsymbol{\sigma}_m, f, \mathcal{T}_{m})\leq\frac{1}{3}\beta_2(1+\delta_2)\Lambda\theta\eta^2(\boldsymbol{\sigma}_m, f, \mathcal{T}_{m}).
\]
It is also easy to see from the definitions of $\delta_1$, $\beta_1$ and $\delta_2$
\[
2\beta_1\delta_1\eta_{1}^2(\boldsymbol{\sigma}_m, f, \mathcal{T}_{m})\leq\frac{1}{3}\beta_2(1+\delta_2)\Lambda\theta\eta^2(\boldsymbol{\sigma}_m, f, \mathcal{T}_{m}),
\]
\[
\beta_2(\delta_2+\delta_2^2)\eta^2(\boldsymbol{\sigma}_m, f, \mathcal{T}_{m})\leq\frac{1}{3}\beta_2(1+\delta_2)\Lambda\theta\eta^2(\boldsymbol{\sigma}_m, f, \mathcal{T}_{m}).
\]
Adding the last four inequalities, we have
\begin{align*}
&\|\boldsymbol{\sigma}-\boldsymbol{\sigma}_{m+1}\|_{\mathcal{C}}^2+\beta_1\eta_{1}^2(\boldsymbol{\sigma}_{m+1}, f, \mathcal{T}_{m+1}) + \beta_2\eta^2(\boldsymbol{\sigma}_{m+1}, f, \mathcal{T}_{m+1}) \\
\leq& \frac{1-2\varepsilon}{1-\varepsilon}\|\boldsymbol{\sigma}-\boldsymbol{\sigma}_{m}\|_{\mathcal{C}}^2 + \beta_1(1-\delta_1)\eta_{1}^2(\boldsymbol{\sigma}_m, f, \mathcal{T}_{m}) +\beta_2(1-\delta_2^2)\eta^2(\boldsymbol{\sigma}_m, f, \mathcal{T}_{m}).
\end{align*}
Therefore \eqref{ahcdgconvergence} is acquired by choosing $\alpha=\max\{\frac{1-2\varepsilon}{1-\varepsilon}, 1-\delta_1, 1-\delta_2^2\}<1$.
\end{proof}

\section{Complexity of the AHCDGM}

We discuss the complexity of the AHCDGM in this section. Through introducing two connection operators corresponding to the deflection and the bending moment respectively, we acquire the quasi-optimality of the total error, which leads to a nonlinear approximation class.
In order to achieve the asymptotic estimate for the total error, we also develop the discrete reliability of the error estimator.

\subsection{Connection operators}

To derive the quasi-optimality of the total error, two connection operators are provided in this subsection.
For any $K\in\mathcal{T}$, we define a modified Argyris element $\{K,V_K^{\textsf{MA}}, \mathcal{N}_K\}$ as follows:
\begin{itemize}
\item The local shape function space $V_K^{\textsf{MA}}$ is $P_{k+4}(K)$;
\item A unisolvent set of degrees of freedom $\mathcal{N}_K$ is given for any shape function $w\in V_K^{\textsf{MA}}$ by (cf. \cite{BrennerSung2005})
\begin{enumerate}[(i)]
  \item the pointwise evaluations of $w$ at the three vertices of the triangle, \label{enum:temp1}
  \item $\int_ewvds \quad \forall~v\in P_{k-2}(e)$ on each egde $e$ of the triangle, \label{enum:temp2}
  \item $\int_Kwvds \quad \forall~v\in P_{k-3}(K)$, \label{enum:temp3}
  \item the pointwise evaluations of $\boldsymbol{\nabla}w$ and $\boldsymbol{\nabla}^2w$ at the three vertices of the triangle, \label{enum:temp4}
  \item the evaluations of the normal derivatives of $w$ at $k$ interior points on each edge, \label{enum:temp5}
  \item $(k-1)k/2$ additional interior nodal variables that uniquely determine polynomials in $P_{k-2}(K)$. \label{enum:temp6}
\end{enumerate}
\end{itemize}
Let $V_{\mathcal{T}}^{\textsf{MA}}\subset H_0^2(\Omega)$ be the corresponding modified Argyris finite element space with respect to $\mathcal{T}$.
Then define a connection operator $I_{\mathcal{T}}^{\textsf{MA}}:V_{\mathcal{T}}\to V_{\mathcal{T}}^{\textsf{MA}}$ associated with the deflection as follows: given $v\in V_{\mathcal{T}}$,
\begin{itemize}
\item for any degree of freedom $D$ corresponding to \eqref{enum:temp1}-\eqref{enum:temp3} and \eqref{enum:temp6} of $\mathcal{N}_K$,
\[
D(I_{\mathcal{T}}^{\textsf{MA}}v)=D(v),
\]
\item for any degree of freedom $D\in\partial\Omega$ corresponding to \eqref{enum:temp4}-\eqref{enum:temp5} of $\mathcal{N}_K$,
\[
D(I_{\mathcal{T}}^{\textsf{MA}}v)=0,
\]
\item for any degree of freedom $D\in\Omega$ corresponding to \eqref{enum:temp4}-\eqref{enum:temp5} of $\mathcal{N}_K$,
\[
D(I_{\mathcal{T}}^{\textsf{MA}}v)=\frac{1}{|\mathcal{T}_{p}|}\sum_{K\in\mathcal{T}_{p}}(D(v|_K)),
\]
where $p$ is the nodal point corresponding to $D$, and $\mathcal{T}_{p}$
is the set of triangles in $\mathcal{T}$ sharing the common nodal point $p$.
\end{itemize}
It is obvious that the degrees of freedom of the modified Argyris element can be obtained from the degrees of freedom of the Argyris element by replacing the nodal variables of the $k$-th Lagrange element with the nodal variables of $I_{\mathcal{T}}$. Thus as \eqref{eq:bp0}, it follows
\begin{equation}\label{eq:bIma}
b_{\mathcal{T}}(\boldsymbol{\tau}, v-I_{\mathcal{T}}^{\textsf{MA}}v)=0 \quad \forall~\boldsymbol{\tau}\in \boldsymbol{\Sigma}_{\mathcal{T}}, v\in V_{\mathcal{T}}.
\end{equation}
Due to the similar argument in \cite[Lemma 4.3]{HuangHuangXu2011}, we have for any $v\in V_{\mathcal{T}}$
\begin{equation}\label{eq:interpolationImmaestimate}
\|v-I_{\mathcal{T}}^{\textsf{MA}}v\|_{0,K} + h_K^2|I_{\mathcal{T}}^{\textsf{MA}}v|_{2,K}\lesssim h_K^2\|v\|_{2,\mathcal{O}_{\mathcal{T}}(K)} \quad\forall~K\in\mathcal{T}.
\end{equation}

Next we define another connection operator $\boldsymbol{\Pi}_{\mathcal{T}}: \boldsymbol{\Sigma}_{\mathcal{T}}\to \boldsymbol{\Sigma}_{\mathcal{T}}^{\textsf{HHJ}}$ associated with the bending moment in the following way  : given $\boldsymbol{\tau}\in \boldsymbol{\Sigma}_{\mathcal{T}}$, for any element $K\in \mathcal{T}$ and any edge $e$ of $K$,
\[
\int_eM_n\left(\boldsymbol{\Pi}_{\mathcal{T}}\boldsymbol{\tau}\right)\mu ds=\int_e\{M_n\left(\boldsymbol{\tau}\right)\}\mu ds \quad \forall~\mu \in P_{k-1}(e),
\]
\[
\int_K(\boldsymbol{\tau}-\boldsymbol{\Pi}_{\mathcal{T}}\boldsymbol{\tau}):\boldsymbol{\varsigma}dx=0 \quad \forall~\boldsymbol{\varsigma}\in \boldsymbol{P}_{k-2}(K, \mathbb{S}).
\]
From the scaling argument and the definition of $\boldsymbol{\Pi}_{\mathcal{T}}$, it readily holds
\begin{equation}\label{eq:piestimate}
\|\boldsymbol{\tau}-\boldsymbol{\Pi}_{\mathcal{T}}\boldsymbol{\tau}\|_0^2\eqsim \sum_{e\in\mathcal{E}^i(\mathcal{T})}h_e\|[M_n(\boldsymbol{\tau})]\|_{0,e}^2 \quad \forall~\boldsymbol{\tau}\in \boldsymbol{\Sigma}_{\mathcal{T}}.
\end{equation}
Let $\boldsymbol{Q}_{\mathcal{T}}$ be the $L^2$-orthogonal projection from $\boldsymbol{L}^2(\Omega, \mathbb{S})$ onto $\boldsymbol{\Sigma}_{\mathcal{T}}$.
We have the following error estimate for the connection operator $\boldsymbol{\Pi}_{\mathcal{T}}$ under the minimal regularity $\boldsymbol{\sigma}\in\boldsymbol{L}^2(\Omega, \mathbb{S})$.
\begin{lemma}
For any $\boldsymbol{\tau}\in\boldsymbol{\Sigma}_{\mathcal{T}}$, it follows
\begin{equation}\label{eq:temp9}
\|\boldsymbol{\sigma}-\boldsymbol{\Pi}_{\mathcal{T}}(\boldsymbol{Q}_{\mathcal{T}}\boldsymbol{\sigma})\|_{0}^2
\lesssim \|\boldsymbol{\sigma}-\boldsymbol{\tau}\|_{0}^2 + \mathrm{osc}^2(f, \mathcal{T}).
\end{equation}
\end{lemma}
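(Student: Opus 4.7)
The plan is to split the error via the triangle inequality, exploit the best-approximation property of the $L^2$-projection $\boldsymbol{Q}_{\mathcal{T}}$, reduce the remaining piece to a weighted edge-jump term via \eqref{eq:piestimate}, and finally control that jump term by inserting a carefully chosen HHJ-conforming field. Concretely, start from
\[
\|\boldsymbol{\sigma}-\boldsymbol{\Pi}_{\mathcal{T}}(\boldsymbol{Q}_{\mathcal{T}}\boldsymbol{\sigma})\|_0^2\lesssim \|\boldsymbol{\sigma}-\boldsymbol{Q}_{\mathcal{T}}\boldsymbol{\sigma}\|_0^2+\|\boldsymbol{Q}_{\mathcal{T}}\boldsymbol{\sigma}-\boldsymbol{\Pi}_{\mathcal{T}}(\boldsymbol{Q}_{\mathcal{T}}\boldsymbol{\sigma})\|_0^2 .
\]
The first summand is immediately dominated by $\|\boldsymbol{\sigma}-\boldsymbol{\tau}\|_0^2$ since $\boldsymbol{Q}_{\mathcal{T}}$ is the $L^2$-best approximation in $\boldsymbol{\Sigma}_{\mathcal{T}}$. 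For the second, \eqref{eq:piestimate} yields
\[
\|\boldsymbol{Q}_{\mathcal{T}}\boldsymbol{\sigma}-\boldsymbol{\Pi}_{\mathcal{T}}(\boldsymbol{Q}_{\mathcal{T}}\boldsymbol{\sigma})\|_0^2\eqsim \sum_{e\in\mathcal{E}^i(\mathcal{T})}h_e\|[M_n(\boldsymbol{Q}_{\mathcal{T}}\boldsymbol{\sigma})]\|_{0,e}^2 .
\]

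Next, I would insert an auxiliary $\boldsymbol{\zeta}\in\boldsymbol{\Sigma}_{\mathcal{T}}^{\textsf{HHJ}}$. Because $[M_n(\boldsymbol{\zeta})]=0$ on each interior edge, $[M_n(\boldsymbol{Q}_{\mathcal{T}}\boldsymbol{\sigma})]=[M_n(\boldsymbol{Q}_{\mathcal{T}}\boldsymbol{\sigma}-\boldsymbol{\zeta})]$, and a standard trace-plus-inverse inequality applied to the piecewise polynomial $\boldsymbol{Q}_{\mathcal{T}}\boldsymbol{\sigma}-\boldsymbol{\zeta}\in\boldsymbol{\Sigma}_{\mathcal{T}}$ gives
\[
\sum_{e}h_e\|[M_n(\boldsymbol{Q}_{\mathcal{T}}\boldsymbol{\sigma}-\boldsymbol{\zeta})]\|_{0,e}^2\lesssim \|\boldsymbol{Q}_{\mathcal{T}}\boldsymbol{\sigma}-\boldsymbol{\zeta}\|_0^2\lesssim \|\boldsymbol{\sigma}-\boldsymbol{\tau}\|_0^2+\|\boldsymbol{\sigma}-\boldsymbol{\zeta}\|_0^2 ,
\]
after one more triangle inequality and the best-approximation bound. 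Everything is now reduced to producing a single $\boldsymbol{\zeta}\in\boldsymbol{\Sigma}_{\mathcal{T}}^{\textsf{HHJ}}$ satisfying $\|\boldsymbol{\sigma}-\boldsymbol{\zeta}\|_0\lesssim \|\boldsymbol{\sigma}-\boldsymbol{\tau}\|_0+\mathrm{osc}(f,\mathcal{T})$.

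This last step is the main obstacle, since $\boldsymbol{\sigma}$ only lives in $\boldsymbol{L}^2$ and the classical Fortin interpolation for the HHJ space is not available. The plan is to take $\boldsymbol{\zeta}$ as the HHJ Galerkin solution on $\mathcal{T}$ (equivalently, the HCDG solution with $C_0=0$) and run a Brezzi-type quasi-optimality argument tailored to minimal regularity. Consistency of the first HHJ equation follows from \eqref{eq:continuousvar1} together with $[M_n(\boldsymbol{\tau}')]=0$ for $\boldsymbol{\tau}'\in\boldsymbol{\Sigma}_{\mathcal{T}}^{\textsf{HHJ}}$ and $u\in H_0^2$. For the second equation, the consistency error is handled by testing with $I_{\mathcal{T}}^{\textsf{MA}}v\in V_{\mathcal{T}}^{\textsf{MA}}\subset H_0^2(\Omega)$ instead of $v\in V_{\mathcal{T}}$: by \eqref{eq:bIma} and the simplification of $b_{\mathcal{T}}(\boldsymbol{\tau}',I_{\mathcal{T}}^{\textsf{MA}}v)=-\int_\Omega\boldsymbol{\tau}':\mathcal{K}(I_{\mathcal{T}}^{\textsf{MA}}v)\,dx$ for HHJ-conforming fields, one obtains
\[
\int_{\Omega}(\boldsymbol{\sigma}-\boldsymbol{\sigma}^{\textsf{HHJ}}_{\mathcal{T}}):\mathcal{K}(I_{\mathcal{T}}^{\textsf{MA}}v)\,dx=\int_{\Omega}f(I_{\mathcal{T}}^{\textsf{MA}}v-v)\,dx.
\]
Using the orthogonality $\int_K Q_K^{k-3}f(I_{\mathcal{T}}^{\textsf{MA}}v-v)\,dx=0$ that follows from the interior degrees of freedom of the modified Argyris element together with \eqref{eq:interpolationImmaestimate}, the right-hand side is bounded by $\mathrm{osc}(f,\mathcal{T})\|v\|_{2,\mathcal{T}}$. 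Combining this consistency estimate with the inf-sup condition \eqref{infsup} and the best approximation of $\boldsymbol{\sigma}$ by $\boldsymbol{\Pi}_{\mathcal{T}}\boldsymbol{\tau}\in\boldsymbol{\Sigma}_{\mathcal{T}}^{\textsf{HHJ}}$ (controlled via \eqref{eq:piestimate} tested against the same HHJ-conforming $\boldsymbol{\zeta}$, then absorbed on the left) delivers the required bound $\|\boldsymbol{\sigma}-\boldsymbol{\sigma}^{\textsf{HHJ}}_{\mathcal{T}}\|_0\lesssim \|\boldsymbol{\sigma}-\boldsymbol{\tau}\|_0+\mathrm{osc}(f,\mathcal{T})$. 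Assembling the three paragraphs gives \eqref{eq:temp9}.
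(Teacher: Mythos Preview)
Your first two steps match the paper exactly: the triangle inequality plus the equivalence \eqref{eq:piestimate} reduce everything to bounding $\sum_{e\in\mathcal{E}^i(\mathcal{T})}h_e\|[M_n(\boldsymbol{Q}_{\mathcal{T}}\boldsymbol{\sigma})]\|_{0,e}^2$. The paper then invokes an external result (Lemma~3.3 of \cite{AnHuang2015}) which bounds this jump term directly by $\|\boldsymbol{\sigma}-\boldsymbol{Q}_{\mathcal{T}}\boldsymbol{\sigma}\|_0^2+\mathrm{osc}^2(f,\mathcal{T})$, using the PDE structure $\boldsymbol{\nabla}\cdot(\boldsymbol{\nabla}\cdot\boldsymbol{\sigma})=-f$; best approximation of $\boldsymbol{Q}_{\mathcal{T}}$ then finishes the proof.

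Your alternative route through the HHJ Galerkin solution has a genuine gap: the absorption in the last paragraph does not close. Tracing your chain, Brezzi quasi-optimality gives $\|\boldsymbol{\sigma}-\boldsymbol{\sigma}^{\textsf{HHJ}}_{\mathcal{T}}\|_0\le C_B\|\boldsymbol{\sigma}-\boldsymbol{\Pi}_{\mathcal{T}}\boldsymbol{\tau}\|_0+C_B\,\mathrm{osc}(f,\mathcal{T})$; then $\|\boldsymbol{\sigma}-\boldsymbol{\Pi}_{\mathcal{T}}\boldsymbol{\tau}\|_0\le\|\boldsymbol{\sigma}-\boldsymbol{\tau}\|_0+\|\boldsymbol{\tau}-\boldsymbol{\Pi}_{\mathcal{T}}\boldsymbol{\tau}\|_0$; and via \eqref{eq:piestimate} plus trace/inverse, $\|\boldsymbol{\tau}-\boldsymbol{\Pi}_{\mathcal{T}}\boldsymbol{\tau}\|_0\le C_T\|\boldsymbol{\tau}-\boldsymbol{\sigma}^{\textsf{HHJ}}_{\mathcal{T}}\|_0\le C_T(\|\boldsymbol{\sigma}-\boldsymbol{\tau}\|_0+\|\boldsymbol{\sigma}-\boldsymbol{\sigma}^{\textsf{HHJ}}_{\mathcal{T}}\|_0)$. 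Combining, the coefficient in front of $\|\boldsymbol{\sigma}-\boldsymbol{\sigma}^{\textsf{HHJ}}_{\mathcal{T}}\|_0$ on the right is $C_BC_T$, and there is no mechanism making this product smaller than $1$. More structurally, the intermediate claim you need, namely that some $\boldsymbol{\zeta}\in\boldsymbol{\Sigma}_{\mathcal{T}}^{\textsf{HHJ}}$ satisfies $\|\boldsymbol{\sigma}-\boldsymbol{\zeta}\|_0\lesssim\|\boldsymbol{\sigma}-\boldsymbol{\tau}\|_0+\mathrm{osc}(f,\mathcal{T})$ for arbitrary $\boldsymbol{\tau}\in\boldsymbol{\Sigma}_{\mathcal{T}}$, is equivalent to the lemma itself (take $\boldsymbol{\zeta}=\boldsymbol{\Pi}_{\mathcal{T}}(\boldsymbol{Q}_{\mathcal{T}}\boldsymbol{\sigma})$ for one direction). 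So the argument is circular. What is missing is precisely the PDE-based jump estimate that the paper imports: one must exploit that $\boldsymbol{\sigma}$ is not an arbitrary $\boldsymbol{L}^2$ tensor but the bending moment of the plate problem, which ties $[M_n(\boldsymbol{Q}_{\mathcal{T}}\boldsymbol{\sigma})]$ on edges to the volume residual and $f$-oscillation without passing through any HHJ-conforming approximant.
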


\begin{proof}
It follows from \eqref{eq:piestimate} with $\boldsymbol{\tau}=\boldsymbol{Q}_{\mathcal{T}}\boldsymbol{\sigma}$
\begin{align*}
\|\boldsymbol{\sigma}-\boldsymbol{\Pi}_{\mathcal{T}}(\boldsymbol{Q}_{\mathcal{T}}\boldsymbol{\sigma})\|_{0}^2\lesssim & \|\boldsymbol{\sigma}-\boldsymbol{Q}_{\mathcal{T}}\boldsymbol{\sigma}\|_{0}^2+ \|\boldsymbol{Q}_{\mathcal{T}}\boldsymbol{\sigma}-\boldsymbol{\Pi}_{\mathcal{T}}(\boldsymbol{Q}_{\mathcal{T}}\boldsymbol{\sigma})\|_{0}^2 \\
\lesssim & \|\boldsymbol{\sigma}-\boldsymbol{Q}_{\mathcal{T}}\boldsymbol{\sigma}\|_{0}^2+\sum_{e\in\mathcal{E}^i(\mathcal{T})}h_e\|[M_n(\boldsymbol{Q}_{\mathcal{T}}\boldsymbol{\sigma})]\|_{0,e}^2.
\end{align*}
Since Lemma~3.3 in \cite{AnHuang2015}, we have
\[
\sum_{e\in\mathcal{E}^i(\mathcal{T})}h_e\|[M_n(\boldsymbol{Q}_{\mathcal{T}}\boldsymbol{\sigma})]\|_{0,e}^2\lesssim \|\boldsymbol{\sigma}-\boldsymbol{Q}_{\mathcal{T}}\boldsymbol{\sigma}\|_{0}^2+ \mathrm{osc}^2(f,\mathcal{T}).
\]
Therefore by the definition of $\boldsymbol{Q}_{\mathcal{T}}$, it holds for any $\boldsymbol{\tau}\in\boldsymbol{\Sigma}_{\mathcal{T}}$
\[
\|\boldsymbol{\sigma}-\boldsymbol{\Pi}_{\mathcal{T}}(\boldsymbol{Q}_{\mathcal{T}}\boldsymbol{\sigma})\|_{0}^2\lesssim \|\boldsymbol{\sigma}-\boldsymbol{Q}_{\mathcal{T}}\boldsymbol{\sigma}\|_{0}^2+ \mathrm{osc}^2(f,\mathcal{T})\leq \|\boldsymbol{\sigma}-\boldsymbol{\tau}\|_{0}^2+ \mathrm{osc}^2(f, \mathcal{T}),
\]
as required.
\end{proof}

\subsection{Discrete reliability of the error estimator}

In this subsection, we prove the discrete reliability of the error estimator by employing the discrete Helmholtz decomposition and the stability of the postprocessing error with respect to the mesh.
\begin{lemma}
There exist positive constants $C_5$ and $C_{\xi2}^{\ast}$ depending only on the shape-regularity of the triangulations, the polynomial degree $k$ and the tensor $\mathcal{C}$ such that if $C_{\xi}\leq C_{\xi2}^{\ast}$, then
\begin{equation}\label{eq:posterioriestimatediscreteupperbound}
\|\boldsymbol{\sigma}_{\mathcal{T}}-\boldsymbol{\sigma}_{\mathcal{T}^{\ast}}\|_{\mathcal{C}}^2\leq C_5\eta^2(\boldsymbol{\sigma}_{\mathcal{T}}, f, \mathcal{T}\backslash\mathcal{T}^{\ast}).
\end{equation}
\end{lemma}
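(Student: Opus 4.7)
The plan is to mimic the reliability argument \eqref{eq:posterioriestimateupperbound} but on the refined mesh $\mathcal{T}^{\ast}$, using the \emph{discrete} Helmholtz decomposition of \cite{HuangHuangXu2011} instead of the continuous one, and then to pay for the mismatch between $\boldsymbol{\sigma}_{\mathcal{T}}-\boldsymbol{\sigma}_{\mathcal{T}^{\ast}}$ and $\widehat{\boldsymbol{\sigma}}_{\mathcal{T}}-\widehat{\boldsymbol{\sigma}}_{\mathcal{T}^{\ast}}$ by means of the stability estimate \eqref{eq:stabilitypostprocess}. More precisely, I would start from the triangle inequality
\[
\|\boldsymbol{\sigma}_{\mathcal{T}}-\boldsymbol{\sigma}_{\mathcal{T}^{\ast}}\|_{\mathcal{C}}^{2}\lesssim \|\widehat{\boldsymbol{\sigma}}_{\mathcal{T}}-\widehat{\boldsymbol{\sigma}}_{\mathcal{T}^{\ast}}\|_{\mathcal{C}}^{2}+\|(\widehat{\boldsymbol{\sigma}}_{\mathcal{T}^{\ast}}-\boldsymbol{\sigma}_{\mathcal{T}^{\ast}})-(\widehat{\boldsymbol{\sigma}}_{\mathcal{T}}-\boldsymbol{\sigma}_{\mathcal{T}})\|_{\mathcal{C}}^{2},
\]
and control the second summand by \eqref{eq:stabilitypostprocess}, so that only $\|\widehat{\boldsymbol{\sigma}}_{\mathcal{T}}-\widehat{\boldsymbol{\sigma}}_{\mathcal{T}^{\ast}}\|_{\mathcal{C}}^{2}$ remains to be bounded, modulo a term $C_{\xi}^{2}\|\boldsymbol{\sigma}_{\mathcal{T}^{\ast}}-\boldsymbol{\sigma}_{\mathcal{T}}\|_{\mathcal{C}}^{2}$ that will be absorbed into the left-hand side under the smallness assumption on $C_{\xi}$.

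Next, since $\widehat{\boldsymbol{\sigma}}_{\mathcal{T}}-\widehat{\boldsymbol{\sigma}}_{\mathcal{T}^{\ast}}\in\boldsymbol{\Sigma}_{\mathcal{T}^{\ast}}^{\textsf{HHJ}}$, apply the discrete Helmholtz decomposition on $\mathcal{T}^{\ast}$ exactly as in \eqref{eq:discretehelmholtz1}--\eqref{eq:discretehelmholtz2} to write $\widehat{\boldsymbol{\sigma}}_{\mathcal{T}}-\widehat{\boldsymbol{\sigma}}_{\mathcal{T}^{\ast}}=\mathcal{K}_{\mathcal{T}^{\ast}}(\psi)+\boldsymbol{\varepsilon}^{\perp}(\boldsymbol{\phi})$ with $\psi\in V_{\mathcal{T}^{\ast}}$, $\boldsymbol{\phi}\in\boldsymbol{W}_{\mathcal{T}^{\ast}}$. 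Testing with $\mathcal{K}_{\mathcal{T}^{\ast}}(\psi)$ and repeating verbatim the derivation of \eqref{eq:temp6}, the chain \eqref{hcdg2p} on both meshes together with \eqref{eq:bp0} and the fact that $I_{\mathcal{T}}\psi=\psi$ on $\mathcal{T}\cap\mathcal{T}^{\ast}$ produces
\[
\|\mathcal{K}_{\mathcal{T}^{\ast}}(\psi)\|_{0}\lesssim \mathrm{osc}(f,\mathcal{T}\setminus\mathcal{T}^{\ast}).
\]
For the curl part, observe that $\boldsymbol{\varepsilon}^{\perp}(\boldsymbol{\phi})\in \boldsymbol{\Sigma}_{\mathcal{T}^{\ast}}^{\textsf{HHJ}}$ gives $a(\boldsymbol{\sigma}_{\mathcal{T}^{\ast}},\boldsymbol{\varepsilon}^{\perp}(\boldsymbol{\phi}))=0$ via \eqref{hcdg1} on $\mathcal{T}^{\ast}$ and \eqref{eq:kerBh}; subtracting \eqref{eq:temp13} with $\boldsymbol{I}_{\mathcal{T}}^{\textsf{SZ}}\boldsymbol{\phi}\in\boldsymbol{W}_{\mathcal{T}}$ then reduces $a(\boldsymbol{\sigma}_{\mathcal{T}},\boldsymbol{\varepsilon}^{\perp}(\boldsymbol{\phi}))$ to $a(\boldsymbol{\sigma}_{\mathcal{T}},\boldsymbol{\varepsilon}^{\perp}(\boldsymbol{\phi}-\boldsymbol{I}_{\mathcal{T}}^{\textsf{SZ}}\boldsymbol{\phi}))$. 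Because $\boldsymbol{\phi}\in\boldsymbol{W}_{\mathcal{T}^{\ast}}$ is already a polynomial on every $K\in\mathcal{T}\cap\mathcal{T}^{\ast}$ whose Scott--Zhang patch lies in $\mathcal{T}\cap\mathcal{T}^{\ast}$, the interpolation error $\boldsymbol{\phi}-\boldsymbol{I}_{\mathcal{T}}^{\textsf{SZ}}\boldsymbol{\phi}$ is supported in a neighbourhood of $\mathcal{T}\setminus\mathcal{T}^{\ast}$. Element-wise integration by parts, followed by the standard Scott--Zhang bounds, then yields the localized estimate $|a(\boldsymbol{\sigma}_{\mathcal{T}},\boldsymbol{\varepsilon}^{\perp}(\boldsymbol{\phi}))|\lesssim \eta_{2}(\boldsymbol{\sigma}_{\mathcal{T}},\mathcal{T}\setminus\mathcal{T}^{\ast})\|\boldsymbol{\phi}\|_{1}$, in the same spirit as \eqref{eq:temp4}. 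The residual cross-term $a((\widehat{\boldsymbol{\sigma}}_{\mathcal{T}^{\ast}}-\boldsymbol{\sigma}_{\mathcal{T}^{\ast}})-(\widehat{\boldsymbol{\sigma}}_{\mathcal{T}}-\boldsymbol{\sigma}_{\mathcal{T}}),\boldsymbol{\varepsilon}^{\perp}(\boldsymbol{\phi}))$ is handled directly by Cauchy--Schwarz and \eqref{eq:stabilitypostprocess}, producing $(C_{\xi}\|\boldsymbol{\sigma}_{\mathcal{T}^{\ast}}-\boldsymbol{\sigma}_{\mathcal{T}}\|_{\mathcal{C}}+\sqrt{C_{\xi}}\,\widetilde{\eta}_{1}(\boldsymbol{\sigma}_{\mathcal{T}},\mathcal{T}\setminus\mathcal{T}^{\ast}))\|\boldsymbol{\phi}\|_{1}$.

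Collecting the two pieces with the Cauchy--Schwarz inequality and the stability bound $\|\boldsymbol{\phi}\|_{1}+\|\mathcal{K}_{\mathcal{T}^{\ast}}(\psi)\|_{0}\lesssim\|\widehat{\boldsymbol{\sigma}}_{\mathcal{T}}-\widehat{\boldsymbol{\sigma}}_{\mathcal{T}^{\ast}}\|_{0}$, then applying Young's inequality to kick back the $\|\widehat{\boldsymbol{\sigma}}_{\mathcal{T}}-\widehat{\boldsymbol{\sigma}}_{\mathcal{T}^{\ast}}\|_{\mathcal{C}}^{2}$ factor, I expect to reach
\[
\|\widehat{\boldsymbol{\sigma}}_{\mathcal{T}}-\widehat{\boldsymbol{\sigma}}_{\mathcal{T}^{\ast}}\|_{\mathcal{C}}^{2}\lesssim \eta^{2}(\boldsymbol{\sigma}_{\mathcal{T}},f,\mathcal{T}\setminus\mathcal{T}^{\ast})+C_{\xi}^{2}\|\boldsymbol{\sigma}_{\mathcal{T}^{\ast}}-\boldsymbol{\sigma}_{\mathcal{T}}\|_{\mathcal{C}}^{2}+C_{\xi}\,\widetilde{\eta}_{1}^{2}(\boldsymbol{\sigma}_{\mathcal{T}},\mathcal{T}\setminus\mathcal{T}^{\ast}).
\]
Plugging this back into the triangle-inequality splitting, using $C_{\xi}\le C_{\xi 2}^{\ast}\le 1$ and choosing $C_{\xi 2}^{\ast}$ so that the coefficient of $\|\boldsymbol{\sigma}_{\mathcal{T}^{\ast}}-\boldsymbol{\sigma}_{\mathcal{T}}\|_{\mathcal{C}}^{2}$ on the right is $\leq 1/2$, the desired inequality \eqref{eq:posterioriestimatediscreteupperbound} follows after absorption.

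The main obstacle I anticipate is the careful bookkeeping needed for the localization: the Scott--Zhang step only produces \emph{exact} reproduction on elements whose full nodal patch lies in $\mathcal{T}\cap\mathcal{T}^{\ast}$, so an enlarged neighbourhood of $\mathcal{T}\setminus\mathcal{T}^{\ast}$ appears naturally; one must invoke shape-regularity and the finite overlap of patches to conclude that the indicator sums over this enlargement are still controlled by $\eta^{2}(\boldsymbol{\sigma}_{\mathcal{T}},f,\mathcal{T}\setminus\mathcal{T}^{\ast})$. A secondary delicacy is tracking the constants so that the smallness threshold $C_{\xi 2}^{\ast}$ depends only on shape-regularity, $k$, and $\mathcal{C}$, which requires that every Young-type splitting be done with a fixed $\delta$ independent of $C_{\xi}$ before the final absorption.
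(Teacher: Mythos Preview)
Your proposal is correct and uses essentially the same ingredients as the paper's proof: the discrete Helmholtz decomposition \eqref{eq:discretehelmholtz1}--\eqref{eq:discretehelmholtz2} of $\widehat{\boldsymbol{\sigma}}_{\mathcal{T}}-\widehat{\boldsymbol{\sigma}}_{\mathcal{T}^{\ast}}$, the bound \eqref{eq:temp6} for the $\mathcal{K}_{\mathcal{T}^{\ast}}(\psi)$ part, the Scott--Zhang localization for the curl part, the stability estimate \eqref{eq:stabilitypostprocess}, and a final absorption under smallness of $C_{\xi}$.

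The only organizational difference is the starting split. You begin with the triangle inequality $\|\boldsymbol{\sigma}_{\mathcal{T}}-\boldsymbol{\sigma}_{\mathcal{T}^{\ast}}\|_{\mathcal{C}}^{2}\lesssim \|\widehat{\boldsymbol{\sigma}}_{\mathcal{T}}-\widehat{\boldsymbol{\sigma}}_{\mathcal{T}^{\ast}}\|_{\mathcal{C}}^{2}+\|(\widehat{\boldsymbol{\sigma}}_{\mathcal{T}^{\ast}}-\boldsymbol{\sigma}_{\mathcal{T}^{\ast}})-(\widehat{\boldsymbol{\sigma}}_{\mathcal{T}}-\boldsymbol{\sigma}_{\mathcal{T}})\|_{\mathcal{C}}^{2}$ and then bound the hatted difference; the paper instead writes $\|\boldsymbol{\sigma}_{\mathcal{T}}-\boldsymbol{\sigma}_{\mathcal{T}^{\ast}}\|_{\mathcal{C}}^{2}=a(\boldsymbol{\sigma}_{\mathcal{T}}-\boldsymbol{\sigma}_{\mathcal{T}^{\ast}},\widehat{\boldsymbol{\sigma}}_{\mathcal{T}}-\widehat{\boldsymbol{\sigma}}_{\mathcal{T}^{\ast}})+a(\boldsymbol{\sigma}_{\mathcal{T}}-\boldsymbol{\sigma}_{\mathcal{T}^{\ast}},(\widehat{\boldsymbol{\sigma}}_{\mathcal{T}^{\ast}}-\boldsymbol{\sigma}_{\mathcal{T}^{\ast}})-(\widehat{\boldsymbol{\sigma}}_{\mathcal{T}}-\boldsymbol{\sigma}_{\mathcal{T}}))$ and applies the Helmholtz decomposition to the \emph{test} slot of the first term, keeping $\|\boldsymbol{\sigma}_{\mathcal{T}}-\boldsymbol{\sigma}_{\mathcal{T}^{\ast}}\|_{\mathcal{C}}$ as a common factor throughout. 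Both routes lead to the same absorbable structure. Regarding your localization concern: the paper sidesteps the enlarged-patch issue by selecting the Scott--Zhang averaging sets so that $\boldsymbol{I}_{\mathcal{T}}^{\textsf{SZ}}\boldsymbol{\phi}=\boldsymbol{\phi}$ exactly on every $K\in\mathcal{T}\cap\mathcal{T}^{\ast}$, which is the standard device from \cite{CasconKreuzerNochettoSiebert2008}; with that choice the indicator sum is supported precisely on $\mathcal{T}\setminus\mathcal{T}^{\ast}$ and no neighbourhood enlargement is needed.
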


\begin{proof}
Here we use the discrete Helmholtz decomposition \eqref{eq:discretehelmholtz1}-\eqref{eq:discretehelmholtz2} of
$\widehat{\boldsymbol{\sigma}}_{\mathcal{T}}-\widehat{\boldsymbol{\sigma}}_{\mathcal{T}^{\ast}}$ again.
Since \eqref{hcdg1} and \eqref{eq:kerBh} on $\mathcal{T}^{\ast}$, it holds
\[
a(\boldsymbol{\sigma}_{\mathcal{T}^{\ast}},\boldsymbol{\varepsilon}^{\perp}(\boldsymbol{\phi}))=0.
\]
Thus we have from \eqref{eq:temp13}
\[
a(\boldsymbol{\sigma}_{\mathcal{T}}-\boldsymbol{\sigma}_{\mathcal{T}^{\ast}},\boldsymbol{\varepsilon}^{\perp}(\boldsymbol{\phi}))=a(\boldsymbol{\sigma}_{\mathcal{T}},\boldsymbol{\varepsilon}^{\perp}(\boldsymbol{\phi}))=a(\boldsymbol{\sigma}_{\mathcal{T}},\boldsymbol{\varepsilon}^{\perp}(\boldsymbol{\phi}-\boldsymbol{I}_{\mathcal{T}}^{\textsf{SZ}}\boldsymbol{\phi})).
\]
This is just (4.59) in \cite{HuangHuangXu2011}. Thus using integration by parts, the fact that $\boldsymbol{I}_{\mathcal{T}}^{\textsf{SZ}}\boldsymbol{\phi}=\boldsymbol{\phi}$ on any $K\in\mathcal{T}\cap\mathcal{T}^{\ast}$ and the error estimates of $\boldsymbol{I}_{\mathcal{T}}^{\textsf{SZ}}$, we get
\[
a(\boldsymbol{\sigma}_{\mathcal{T}}-\boldsymbol{\sigma}_{\mathcal{T}^{\ast}},\boldsymbol{\varepsilon}^{\perp}(\boldsymbol{\phi}))\lesssim \eta_{2}(\boldsymbol{\sigma}_{\mathcal{T}}, \mathcal{T}\backslash\mathcal{T}^{\ast})\|\boldsymbol{\phi}\|_1.
\]
Together with \eqref{eq:discretehelmholtz2} and \eqref{eq:stabilitypostprocess}, it follows
\begin{align*}
a(\boldsymbol{\sigma}_{\mathcal{T}}-\boldsymbol{\sigma}_{\mathcal{T}^{\ast}},\boldsymbol{\varepsilon}^{\perp}(\boldsymbol{\phi}))\lesssim &\eta_{2}(\boldsymbol{\sigma}_{\mathcal{T}}, \mathcal{T}\backslash\mathcal{T}^{\ast})\|\widehat{\boldsymbol{\sigma}}_{\mathcal{T}}-\widehat{\boldsymbol{\sigma}}_{\mathcal{T}^{\ast}}\|_{\mathcal{C}} \\
\lesssim & (1+C_{\xi})\eta_{2}(\boldsymbol{\sigma}_{\mathcal{T}}, \mathcal{T}\backslash\mathcal{T}^{\ast})\left(\|\boldsymbol{\sigma}_{\mathcal{T}}-\boldsymbol{\sigma}_{\mathcal{T}^{\ast}}\|_{\mathcal{C}}+\widetilde{\eta}_{1}(\boldsymbol{\sigma}_{\mathcal{T}}, \mathcal{T}\backslash\mathcal{T}^{\ast})\right).
\end{align*}
According to the Cauchy-Schwarz inequality and \eqref{eq:temp6},
\[
a(\boldsymbol{\sigma}_{\mathcal{T}}-\boldsymbol{\sigma}_{\mathcal{T}^{\ast}}, \mathcal{K}_{\mathcal{T}^{\ast}}(\psi))\leq\|\boldsymbol{\sigma}_{\mathcal{T}}-\boldsymbol{\sigma}_{\mathcal{T}^{\ast}}\|_{\mathcal{C}}\|\mathcal{K}_{\mathcal{T}^{\ast}}(\psi)\|_{\mathcal{C}}
\lesssim\|\boldsymbol{\sigma}_{\mathcal{T}}-\boldsymbol{\sigma}_{\mathcal{T}^{\ast}}\|_{\mathcal{C}}\mathrm{osc}(f, \mathcal{T}\backslash\mathcal{T}^{\ast}).
\]
Then we get from \eqref{eq:discretehelmholtz1}
\begin{align*}
&a(\boldsymbol{\sigma}_{\mathcal{T}}-\boldsymbol{\sigma}_{\mathcal{T}^{\ast}},\widehat{\boldsymbol{\sigma}}_{\mathcal{T}}-\widehat{\boldsymbol{\sigma}}_{\mathcal{T}^{\ast}}) \\
= & a(\boldsymbol{\sigma}_{\mathcal{T}}-\boldsymbol{\sigma}_{\mathcal{T}^{\ast}},\boldsymbol{\varepsilon}^{\perp}(\boldsymbol{\phi})) + a(\boldsymbol{\sigma}_{\mathcal{T}}-\boldsymbol{\sigma}_{\mathcal{T}^{\ast}}, \mathcal{K}_{\mathcal{T}^{\ast}}(\psi)) \\
\lesssim & (1+C_{\xi})\left(\|\boldsymbol{\sigma}_{\mathcal{T}}-\boldsymbol{\sigma}_{\mathcal{T}^{\ast}}\|_{\mathcal{C}}(\eta_{2}(\boldsymbol{\sigma}_{\mathcal{T}}, \mathcal{T}\backslash\mathcal{T}^{\ast})+\mathrm{osc}(f, \mathcal{T}\backslash\mathcal{T}^{\ast})) + \eta^2(\boldsymbol{\sigma}_{\mathcal{T}}, f, \mathcal{T}\backslash\mathcal{T}^{\ast})\right).
\end{align*}
On the other hand, it holds from
\eqref{eq:stabilitypostprocess}
\begin{align*}
&a(\boldsymbol{\sigma}_{\mathcal{T}}-\boldsymbol{\sigma}_{\mathcal{T}^{\ast}}, (\widehat{\boldsymbol{\sigma}}_{\mathcal{T}^{\ast}}-\boldsymbol{\sigma}_{\mathcal{T}^{\ast}})-(\widehat{\boldsymbol{\sigma}}_{\mathcal{T}}-\boldsymbol{\sigma}_{\mathcal{T}}))\\
\leq&\|\boldsymbol{\sigma}_{\mathcal{T}}-\boldsymbol{\sigma}_{\mathcal{T}^{\ast}}\|_{\mathcal{C}}\|(\widehat{\boldsymbol{\sigma}}_{\mathcal{T}^{\ast}}-\boldsymbol{\sigma}_{\mathcal{T}^{\ast}})-(\widehat{\boldsymbol{\sigma}}_{\mathcal{T}}-\boldsymbol{\sigma}_{\mathcal{T}})\|_{\mathcal{C}} \\
\lesssim & \|\boldsymbol{\sigma}_{\mathcal{T}}-\boldsymbol{\sigma}_{\mathcal{T}^{\ast}}\|_{\mathcal{C}}(C_{\xi}\|\boldsymbol{\sigma}_{\mathcal{T}}-\boldsymbol{\sigma}_{\mathcal{T}^{\ast}}\|_{\mathcal{C}} +
(1+C_{\xi})\widetilde{\eta}_{1}(\boldsymbol{\sigma}_{\mathcal{T}}, \mathcal{T}\backslash\mathcal{T}^{\ast})).
\end{align*}
Adding the last two inequalities, there exists a positive constant $C_{\xi2}^{\ast}$ depending only on the shape-regularity of the triangulations, the polynomial degree $k$ and the tensor $\mathcal{C}$ such that
\begin{align*}
&2C_{\xi2}^{\ast}\|\boldsymbol{\sigma}_{\mathcal{T}}-\boldsymbol{\sigma}_{\mathcal{T}^{\ast}}\|_{\mathcal{C}}^2 \\
\leq & C_{\xi}\|\boldsymbol{\sigma}_{\mathcal{T}}-\boldsymbol{\sigma}_{\mathcal{T}^{\ast}}\|_{\mathcal{C}}^2 + (1+C_{\xi})\left(\sqrt{3}\|\boldsymbol{\sigma}_{\mathcal{T}}-\boldsymbol{\sigma}_{\mathcal{T}^{\ast}}\|_{\mathcal{C}}\eta(\boldsymbol{\sigma}_{\mathcal{T}}, f, \mathcal{T}\backslash\mathcal{T}^{\ast}) + \eta^2(\boldsymbol{\sigma}_{\mathcal{T}}, f, \mathcal{T}\backslash\mathcal{T}^{\ast})\right).
\end{align*}
Hence if $C_{\xi}\leq C_{\xi2}^{\ast}$, the last inequality can be rewritten as
\[
C_{\xi2}^{\ast}\|\boldsymbol{\sigma}_{\mathcal{T}}-\boldsymbol{\sigma}_{\mathcal{T}^{\ast}}\|_{\mathcal{C}}^2
\leq  (1+C_{\xi2}^{\ast})\left(\sqrt{3}\|\boldsymbol{\sigma}_{\mathcal{T}}-\boldsymbol{\sigma}_{\mathcal{T}^{\ast}}\|_{\mathcal{C}}\eta(\boldsymbol{\sigma}_{\mathcal{T}}, f, \mathcal{T}\backslash\mathcal{T}^{\ast}) + \eta^2(\boldsymbol{\sigma}_{\mathcal{T}}, f, \mathcal{T}\backslash\mathcal{T}^{\ast})\right).
\]
Applying the Young's inequality, we obtain
\begin{align*}
&C_{\xi2}^{\ast}\|\boldsymbol{\sigma}_{\mathcal{T}}-\boldsymbol{\sigma}_{\mathcal{T}^{\ast}}\|_{\mathcal{C}}^2 \\
\leq & \frac{1}{2}C_{\xi2}^{\ast}\|\boldsymbol{\sigma}_{\mathcal{T}}-\boldsymbol{\sigma}_{\mathcal{T}^{\ast}}\|_{\mathcal{C}}^2+ (1+C_{\xi2}^{\ast})\left(\frac{3(1+C_{\xi2}^{\ast})}{2C_{\xi2}^{\ast}}+1\right) \eta^2(\boldsymbol{\sigma}_{\mathcal{T}}, f, \mathcal{T}\backslash\mathcal{T}^{\ast}),
\end{align*}
which is exactly \eqref{eq:posterioriestimatediscreteupperbound} when we set $C_5=\frac{(1+C_{\xi2}^{\ast})(3+5C_{\xi2}^{\ast})}{(C_{\xi2}^{\ast})^2}$.
\end{proof}

\subsection{The total error}

For any $\boldsymbol{\tau}\in\boldsymbol{\Sigma}_{\mathcal{T}}$, define total error as
\[
E_{\mathcal{T}}(\boldsymbol{\tau}):=\left(\|\boldsymbol{\sigma}-\boldsymbol{\tau}\|_{\mathcal{C}}^2 + \eta_{1}^2(\boldsymbol{\tau}, f, \mathcal{T})\right)^{1/2}.
\]
It is easy to know from \eqref{eq:posterioriestimatelowerbound} that
\begin{equation}\label{eq:totalquasierror}
E_{\mathcal{T}}^2(\boldsymbol{\sigma}_{\mathcal{T}}) \eqsim \|\boldsymbol{\sigma}-\boldsymbol{\sigma}_{\mathcal{T}}\|_{\mathcal{C}}^2 + \beta_1\eta_{1}^2(\boldsymbol{\sigma}_{\mathcal{T}}, f, \mathcal{T}) + \beta_2\eta^2(\boldsymbol{\sigma}_{\mathcal{T}}, f, \mathcal{T}).
\end{equation}

Let $Q_{\mathcal{T}}^{\partial}$ be the $L^2$-orthogonal projection from $L^2(\mathcal{E}(\mathcal{T}))$ onto $M_{\mathcal{T}}$.
\begin{lemma}
For any $K\in\mathcal{T}$, it holds
\begin{equation}\label{eq:Mn4}
\|\partial_{\boldsymbol{n}_e}(I_{\mathcal{T}}u)-Q_{\mathcal{T}}^{\partial}(\partial_{\boldsymbol{n}_e}u)\|_{0, \partial K}^2\lesssim h_K\|\mathcal{C}\boldsymbol{\sigma} - \boldsymbol{Q}_K^{k-2}(\mathcal{C}\boldsymbol{\sigma})\|_{0,K}^2.
\end{equation}
\end{lemma}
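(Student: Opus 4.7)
The plan is to mimic, step for step, the proof of \eqref{eq:Mn1} in Lemma~\ref{lem:traceresiduallifting}, with the discrete triple $(\boldsymbol{\sigma}_{\mathcal{T}},u_{\mathcal{T}},\lambda_{\mathcal{T}})$ replaced by the continuous/projected triple $(\boldsymbol{\sigma},I_{\mathcal{T}}u,Q_{\mathcal{T}}^{\partial}(\partial_{\boldsymbol{n}_e}u))$. Starting from the continuous variational identity \eqref{eq:continuousvar1}, I would first use \eqref{eq:bp0} to replace $u$ by $I_{\mathcal{T}}u$ inside $b_{\mathcal{T}}$, then perform elementwise integration by parts on $b_{\mathcal{T}}(\boldsymbol{\tau},I_{\mathcal{T}}u)$, using the edge identity $M_n(\boldsymbol{\tau})\partial_{\boldsymbol{n}}v=M_{nn_e}(\boldsymbol{\tau})\partial_{\boldsymbol{n}_e}v$. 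Restricting the support of $\boldsymbol{\tau}$ to a single $K$, this yields the continuous analogue of \eqref{eq:temp1}:
\[
\int_{\partial K}M_{nn_e}(\boldsymbol{\tau})\bigl(\partial_{\boldsymbol{n}_e}(I_{\mathcal{T}}u)-\partial_{\boldsymbol{n}_e}u\bigr)ds=\int_K\bigl(\mathcal{C}\boldsymbol{\sigma}+\boldsymbol{\nabla}^2(I_{\mathcal{T}}u)\bigr):\boldsymbol{\tau}dx\quad\forall~\boldsymbol{\tau}\in\boldsymbol{P}_{k-1}(K,\mathbb{S}).
\]

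Next, since $M_{nn_e}(\boldsymbol{\tau})|_e\in P_{k-1}(e)$ on each edge $e\subset\partial K$, the defining property of the $L^2$-projection $Q_{\mathcal{T}}^{\partial}$ gives $\int_eM_{nn_e}(\boldsymbol{\tau})(\partial_{\boldsymbol{n}_e}u-Q_{\mathcal{T}}^{\partial}(\partial_{\boldsymbol{n}_e}u))ds=0$, so $\partial_{\boldsymbol{n}_e}u$ in the previous identity may be replaced by $Q_{\mathcal{T}}^{\partial}(\partial_{\boldsymbol{n}_e}u)$. I would then pick the same kind of test tensor as in the proof of \eqref{eq:Mn1}, namely the unique $\boldsymbol{\tau}\in\boldsymbol{P}_{k-1}(K,\mathbb{S})$ characterized by
\[
M_{nn_e}(\boldsymbol{\tau})|_{\partial K}=\partial_{\boldsymbol{n}_e}(I_{\mathcal{T}}u)-Q_{\mathcal{T}}^{\partial}(\partial_{\boldsymbol{n}_e}u),\qquad \int_K\boldsymbol{\tau}:\boldsymbol{\varsigma}dx=0\quad\forall~\boldsymbol{\varsigma}\in\boldsymbol{P}_{k-2}(K,\mathbb{S}).
\]
This is well defined because $\partial_{\boldsymbol{n}_e}(I_{\mathcal{T}}u)|_e\in P_{k-1}(e)$ (normal trace of a $P_k$ polynomial), and a standard scaling argument yields $\|\boldsymbol{\tau}\|_{0,K}\eqsim h_K^{1/2}\|\partial_{\boldsymbol{n}_e}(I_{\mathcal{T}}u)-Q_{\mathcal{T}}^{\partial}(\partial_{\boldsymbol{n}_e}u)\|_{0,\partial K}$.

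With this $\boldsymbol{\tau}$, the right-hand side collapses nicely. Since $I_{\mathcal{T}}u|_K\in P_k(K)$, the Hessian $\boldsymbol{\nabla}^2(I_{\mathcal{T}}u)|_K\in\boldsymbol{P}_{k-2}(K,\mathbb{S})$ is $L^2$-orthogonal to $\boldsymbol{\tau}$, and likewise $\boldsymbol{Q}_K^{k-2}(\mathcal{C}\boldsymbol{\sigma})\in\boldsymbol{P}_{k-2}(K,\mathbb{S})$ is orthogonal to $\boldsymbol{\tau}$, so the integrand reduces to $(\mathcal{C}\boldsymbol{\sigma}-\boldsymbol{Q}_K^{k-2}(\mathcal{C}\boldsymbol{\sigma})):\boldsymbol{\tau}$. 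Cauchy--Schwarz and the scaling estimate for $\|\boldsymbol{\tau}\|_{0,K}$ then give
\[
\|\partial_{\boldsymbol{n}_e}(I_{\mathcal{T}}u)-Q_{\mathcal{T}}^{\partial}(\partial_{\boldsymbol{n}_e}u)\|_{0,\partial K}^2\lesssim h_K^{1/2}\|\mathcal{C}\boldsymbol{\sigma}-\boldsymbol{Q}_K^{k-2}(\mathcal{C}\boldsymbol{\sigma})\|_{0,K}\|\partial_{\boldsymbol{n}_e}(I_{\mathcal{T}}u)-Q_{\mathcal{T}}^{\partial}(\partial_{\boldsymbol{n}_e}u)\|_{0,\partial K},
\]
and dividing by the common factor finishes \eqref{eq:Mn4}. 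There is no genuine obstacle beyond careful bookkeeping; the only things to watch are the polynomial degrees ($I_{\mathcal{T}}u\in P_k$, so $\boldsymbol{\nabla}^2(I_{\mathcal{T}}u)\in\boldsymbol{P}_{k-2}$ and $\partial_{\boldsymbol{n}_e}(I_{\mathcal{T}}u)|_e\in P_{k-1}(e)$) and the admissibility of $I_{\mathcal{T}}u$ in \eqref{eq:continuousvar1} and \eqref{eq:bp0}, both of which hold by the definition of $I_{\mathcal{T}}$ on $H_0^2(\Omega)\cup V_{\mathcal{T}^{\ast}}$.
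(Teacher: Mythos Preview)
Your proposal is correct and follows essentially the same approach as the paper's proof. The paper derives the key identity $\int_{\partial K}M_{nn_e}(\boldsymbol{\tau})(\partial_{\boldsymbol{n}_e}(I_{\mathcal{T}}u)-Q_{\mathcal{T}}^{\partial}(\partial_{\boldsymbol{n}_e}u))\,ds=\int_K(\mathcal{C}\boldsymbol{\sigma}+\boldsymbol{\nabla}^2(I_{\mathcal{T}}u)):\boldsymbol{\tau}\,dx$ by directly manipulating the boundary integral with \eqref{eq:bp0} and integration by parts, whereas you route through \eqref{eq:continuousvar1} first, but this is only a cosmetic reordering; the remainder (the choice of test tensor, the orthogonality to $\boldsymbol{P}_{k-2}(K,\mathbb{S})$, and the scaling/Cauchy--Schwarz finish) is identical to the paper's ``adopting the argument in the proof of \eqref{eq:Mn1}.''
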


\begin{proof}
Using \eqref{eq:bp0} and integration by parts, we have
for any $\boldsymbol{\tau}\in \boldsymbol{P}_{k-1}(K, \mathbb{S})$
\begin{align*}
&\int_{\partial
K}M_{nn_e}(\boldsymbol{\tau})(\partial_{\boldsymbol{n}_e}(I_{\mathcal{T}}u)-Q_{\mathcal{T}}^{\partial}(\partial_{\boldsymbol{n}_e}u)) ds \\
= &\int_{\partial
K}M_{n}(\boldsymbol{\tau})\partial_{\boldsymbol{n}}(I_{\mathcal{T}}u-u) ds \\
=& \int_{\partial K}(\boldsymbol{\tau}\boldsymbol{n})\cdot\boldsymbol{\nabla}(I_{\mathcal{T}}u-u) ds- \int_{K}(\boldsymbol{\nabla}\cdot\boldsymbol{\tau})\cdot\boldsymbol{\nabla}(I_{\mathcal{T}}u-u) dx \\
=&\int_K\boldsymbol{\nabla}^2(I_{\mathcal{T}}u-u):\boldsymbol{\tau}dx=\int_K(\mathcal{C}\boldsymbol{\sigma}+\boldsymbol{\nabla}^2(I_{\mathcal{T}}u)):\boldsymbol{\tau}dx.
\end{align*}
Then we can obtain \eqref{eq:Mn4} by adopting the argument in the proof of \eqref{eq:Mn1}.
\end{proof}

\begin{lemma}
For any $\boldsymbol{\tau}\in\boldsymbol{\Sigma}_{\mathcal{T}}^{\textsf{HHJ}}$ and $v\in V_{\mathcal{T}}$, we have
\begin{equation}\label{eq:temp8}
b_{\mathcal{T}}(\boldsymbol{\tau}, v)+\int_{\Omega}fvdx \lesssim  \left(\|\boldsymbol{\sigma}-\boldsymbol{\tau}\|_{\mathcal{C}} + \mathrm{osc}(f, \mathcal{T})\right)\|v\|_{2,\mathcal{T}}.
\end{equation}
\end{lemma}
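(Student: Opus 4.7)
The plan is to rewrite $b_{\mathcal{T}}(\boldsymbol{\tau},v)+\int_\Omega fv\,dx$ in terms of quantities that are controlled by the two ingredients on the right-hand side, using the connection operator $I_{\mathcal{T}}^{\textsf{MA}}$ to pass from the discrete test function $v\in V_{\mathcal{T}}$ to a conforming one $I_{\mathcal{T}}^{\textsf{MA}}v\in V_{\mathcal{T}}^{\textsf{MA}}\subset H_0^2(\Omega)$, where the continuous equation \eqref{eq:continuousvar2} is available.

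First I would invoke the identity \eqref{eq:bIma}, i.e. $b_{\mathcal{T}}(\boldsymbol{\tau},v)=b_{\mathcal{T}}(\boldsymbol{\tau},I_{\mathcal{T}}^{\textsf{MA}}v)$, valid for any $\boldsymbol{\tau}\in\boldsymbol{\Sigma}_{\mathcal{T}}$ (and in particular for $\boldsymbol{\tau}\in\boldsymbol{\Sigma}_{\mathcal{T}}^{\textsf{HHJ}}$). Next, I would verify by element-wise integration by parts that
\[
b_{\mathcal{T}}(\boldsymbol{\tau},w)=-\int_\Omega\boldsymbol{\tau}:\mathcal{K}(w)\,dx\qquad\forall\,w\in H_0^2(\Omega),\ \boldsymbol{\tau}\in\boldsymbol{\Sigma}_{\mathcal{T}}^{\textsf{HHJ}};
\]
the twisting-moment contributions in $b_{\mathcal{T}}$ cancel the tangential part of $(\boldsymbol{\tau}\boldsymbol{n})\cdot\boldsymbol{\nabla}w$ at each $\partial K$, leaving only $\sum_K\int_{\partial K}M_n(\boldsymbol{\tau})\partial_{\boldsymbol{n}}w\,ds$, which vanishes because $[M_n(\boldsymbol{\tau})]=0$ on interior edges (HHJ continuity) while $\partial_{\boldsymbol{n}}w=0$ on $\partial\Omega$ (as $w\in H_0^2$). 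Applying this to $w=I_{\mathcal{T}}^{\textsf{MA}}v$ and using \eqref{eq:continuousvar2} yields
\[
b_{\mathcal{T}}(\boldsymbol{\tau},v)+\int_\Omega fv\,dx
=\int_\Omega(\boldsymbol{\sigma}-\boldsymbol{\tau}):\mathcal{K}(I_{\mathcal{T}}^{\textsf{MA}}v)\,dx+\int_\Omega f(v-I_{\mathcal{T}}^{\textsf{MA}}v)\,dx.
\]

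It then remains to estimate the two pieces separately. For the first, Cauchy--Schwarz and \eqref{eq:interpolationImmaestimate} give
\[
\left|\int_\Omega(\boldsymbol{\sigma}-\boldsymbol{\tau}):\mathcal{K}(I_{\mathcal{T}}^{\textsf{MA}}v)\,dx\right|
\lesssim\|\boldsymbol{\sigma}-\boldsymbol{\tau}\|_{\mathcal{C}}\,|I_{\mathcal{T}}^{\textsf{MA}}v|_{2,\mathcal{T}}
\lesssim\|\boldsymbol{\sigma}-\boldsymbol{\tau}\|_{\mathcal{C}}\,\|v\|_{2,\mathcal{T}}.
\]
For the second term, the degree-of-freedom \eqref{enum:temp3} in the definition of $I_{\mathcal{T}}^{\textsf{MA}}$ implies the orthogonality $\int_K(v-I_{\mathcal{T}}^{\textsf{MA}}v)\mu\,dx=0$ for all $\mu\in P_{k-3}(K)$, so I may subtract $Q_K^{k-3}f$ element-wise and apply Cauchy--Schwarz together with the $L^2$ estimate in \eqref{eq:interpolationImmaestimate} to obtain
\[
\left|\int_\Omega f(v-I_{\mathcal{T}}^{\textsf{MA}}v)\,dx\right|
\lesssim\mathrm{osc}(f,\mathcal{T})\,\|v\|_{2,\mathcal{T}}.
\]
Adding the two bounds gives \eqref{eq:temp8}.

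I expect the only nontrivial step to be the integration-by-parts identity on $H_0^2(\Omega)$: one must carefully track the normal/tangential decomposition of $(\boldsymbol{\tau}\boldsymbol{n})\cdot\boldsymbol{\nabla}w$ on each $\partial K$, exploit that the twisting-moment boundary term in the definition of $b_{\mathcal{T}}$ is precisely designed to cancel the tangential piece, and then use the normal-normal continuity of $\boldsymbol{\tau}\in\boldsymbol{\Sigma}_{\mathcal{T}}^{\textsf{HHJ}}$ together with $w\in H_0^2(\Omega)$ to eliminate the interior- and boundary-edge contributions. Once that identity is in hand, the remainder is a routine use of \eqref{eq:interpolationImmaestimate} and the orthogonality embedded in the definition of the modified Argyris interpolant.
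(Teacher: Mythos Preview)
Your proposal is correct and follows essentially the same route as the paper: use \eqref{eq:bIma} to replace $v$ by $I_{\mathcal{T}}^{\textsf{MA}}v$, integrate by parts (exploiting the normal--normal continuity of $\boldsymbol{\tau}\in\boldsymbol{\Sigma}_{\mathcal{T}}^{\textsf{HHJ}}$ and $I_{\mathcal{T}}^{\textsf{MA}}v\in H_0^2(\Omega)$) together with \eqref{eq:continuousvar2} to obtain the decomposition into the $(\boldsymbol{\sigma}-\boldsymbol{\tau})$ piece and the $f(v-I_{\mathcal{T}}^{\textsf{MA}}v)$ piece, then bound both via Cauchy--Schwarz and \eqref{eq:interpolationImmaestimate}. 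Your explicit treatment of the integration-by-parts identity is exactly what the paper compresses into the phrase ``integration by parts''.
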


\begin{proof}
From \eqref{eq:bIma}, integration by parts, \eqref{eq:continuousvar2} and the definition of $I_{\mathcal{T}}^{\textsf{MA}}$,
\begin{align*}
b_{\mathcal{T}}(\boldsymbol{\tau}, v)+\int_{\Omega}fvdx=&b_{\mathcal{T}}(\boldsymbol{\tau}, I_{\mathcal{T}}^{\textsf{MA}}v)+\int_{\Omega}fI_{\mathcal{T}}^{\textsf{MA}}vdx+\int_{\Omega}f(v-I_{\mathcal{T}}^{\textsf{MA}}v)dx \\
= & a(\boldsymbol{\sigma}-\boldsymbol{\tau}, \mathcal{K}(I_{\mathcal{T}}^{\textsf{MA}}v)) + \sum_{K\in\mathcal{T}}\int_K(f-Q_K^{k-3}f)(v-I_{\mathcal{T}}^{\textsf{MA}}v)dx,
\end{align*}
which combined with the Cauchy-Schwarz inequality and \eqref{eq:interpolationImmaestimate} ends the proof.
\end{proof}

\begin{lemma}\label{lem:teoptimality}
We have the following quasi-optimality of the total error
\[
E_{\mathcal{T}}(\boldsymbol{\sigma}_{\mathcal{T}})
\lesssim \inf_{\boldsymbol{\tau}\in\boldsymbol{\Sigma}_{\mathcal{T}}}E_{\mathcal{T}}(\boldsymbol{\tau}).
\]
\end{lemma}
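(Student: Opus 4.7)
The plan is to reduce the lemma to the bound
$$
\|\boldsymbol{\sigma}-\boldsymbol{\sigma}_{\mathcal{T}}\|_{\mathcal{C}}^2+\widetilde{\eta}_{1}^2(\boldsymbol{\sigma}_{\mathcal{T}},\mathcal{T})\lesssim\|\boldsymbol{\sigma}-\boldsymbol{\tau}\|_{\mathcal{C}}^2+\widetilde{\eta}_{1}^2(\boldsymbol{\tau},\mathcal{T})+\mathrm{osc}^2(f,\mathcal{T})\qquad\forall\,\boldsymbol{\tau}\in\boldsymbol{\Sigma}_{\mathcal{T}},
$$
which, since $\mathrm{osc}^2(f,\mathcal{T})$ appears inside $\eta_1^2(\cdot,f,\mathcal{T})$ on both sides of $E_{\mathcal{T}}$, immediately yields the statement. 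The pivot throughout is the postprocessed moment $\widehat{\boldsymbol{\sigma}}_{\mathcal{T}}\in\boldsymbol{\Sigma}_{\mathcal{T}}^{\textsf{HHJ}}$ together with its comparison against the canonical HHJ projection $\boldsymbol{\zeta}:=\boldsymbol{\Pi}_{\mathcal{T}}(\boldsymbol{Q}_{\mathcal{T}}\boldsymbol{\sigma})\in\boldsymbol{\Sigma}_{\mathcal{T}}^{\textsf{HHJ}}$, for which \eqref{eq:temp9} already furnishes $\|\boldsymbol{\sigma}-\boldsymbol{\zeta}\|_{\mathcal{C}}\lesssim\|\boldsymbol{\sigma}-\boldsymbol{\tau}\|_{\mathcal{C}}+\mathrm{osc}(f,\mathcal{T})$.

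The central step is to estimate $\|\widehat{\boldsymbol{\sigma}}_{\mathcal{T}}-\boldsymbol{\zeta}\|_{\mathcal{C}}$. I would apply the discrete Helmholtz decomposition of \cite[Lemma~4.1]{HuangHuangXu2011} to write $\widehat{\boldsymbol{\sigma}}_{\mathcal{T}}-\boldsymbol{\zeta}=\mathcal{K}_{\mathcal{T}}(\psi)+\boldsymbol{\varepsilon}^{\perp}(\boldsymbol{\phi})$ with $\|\mathcal{K}_{\mathcal{T}}\psi\|_0+\|\boldsymbol{\phi}\|_1\lesssim\|\widehat{\boldsymbol{\sigma}}_{\mathcal{T}}-\boldsymbol{\zeta}\|_0$, and test the energy against each summand. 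For the $\mathcal{K}_{\mathcal{T}}\psi$ branch, the definition of $\mathcal{K}_{\mathcal{T}}$ together with \eqref{hcdg2p} gives $\|\mathcal{K}_{\mathcal{T}}\psi\|_0^2=\int_\Omega f\psi\,dx+b_{\mathcal{T}}(\boldsymbol{\zeta},\psi)$; inequality \eqref{eq:temp8} (applied with $\boldsymbol{\tau}=\boldsymbol{\zeta}\in\boldsymbol{\Sigma}_{\mathcal{T}}^{\textsf{HHJ}}$) combined with \eqref{eq:km} and \eqref{eq:temp9} then yields $\|\mathcal{K}_{\mathcal{T}}\psi\|_0\lesssim\|\boldsymbol{\sigma}-\boldsymbol{\tau}\|_{\mathcal{C}}+\mathrm{osc}(f,\mathcal{T})$. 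For the $\boldsymbol{\varepsilon}^{\perp}\boldsymbol{\phi}$ branch I would rely on the two orthogonalities $a(\boldsymbol{\sigma},\boldsymbol{\varepsilon}^{\perp}\boldsymbol{\phi})=0$ (integration by parts using $u\in H^2_0(\Omega)$) and $a(\boldsymbol{\sigma}_{\mathcal{T}},\boldsymbol{\varepsilon}^{\perp}\boldsymbol{\phi})=0$ (from \eqref{hcdg1} with $\boldsymbol{\tau}=\boldsymbol{\varepsilon}^{\perp}\boldsymbol{\phi}\in\boldsymbol{\Sigma}_{\mathcal{T}}^{\textsf{HHJ}}$: \eqref{eq:kerBh} kills the $b_{\mathcal{T}}$ term while the boundary sum reduces to $\sum_e\int_e[M_n(\boldsymbol{\varepsilon}^{\perp}\boldsymbol{\phi})]\lambda_{\mathcal{T}}\,ds$, which vanishes thanks to HHJ normal continuity and $\lambda_{\mathcal{T}}|_{\partial\Omega}=0$). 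These produce the splitting $a(\widehat{\boldsymbol{\sigma}}_{\mathcal{T}}-\boldsymbol{\zeta},\boldsymbol{\varepsilon}^{\perp}\boldsymbol{\phi})=a(\widehat{\boldsymbol{\sigma}}_{\mathcal{T}}-\boldsymbol{\sigma}_{\mathcal{T}},\boldsymbol{\varepsilon}^{\perp}\boldsymbol{\phi})+a(\boldsymbol{\sigma}-\boldsymbol{\zeta},\boldsymbol{\varepsilon}^{\perp}\boldsymbol{\phi})$, whose two pieces I control by Cauchy--Schwarz, \eqref{eq:Mn3} (which delivers $\|\widehat{\boldsymbol{\sigma}}_{\mathcal{T}}-\boldsymbol{\sigma}_{\mathcal{T}}\|_{\mathcal{C}}\lesssim\sqrt{C_\xi}\,\widetilde{\eta}_{1}(\boldsymbol{\sigma}_{\mathcal{T}},\mathcal{T})$), \eqref{eq:temp9}, and $\|\boldsymbol{\varepsilon}^{\perp}\boldsymbol{\phi}\|_{\mathcal{C}}\lesssim\|\boldsymbol{\phi}\|_1\lesssim\|\widehat{\boldsymbol{\sigma}}_{\mathcal{T}}-\boldsymbol{\zeta}\|_{\mathcal{C}}$. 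Dividing by $\|\widehat{\boldsymbol{\sigma}}_{\mathcal{T}}-\boldsymbol{\zeta}\|_{\mathcal{C}}$ produces
$$
\|\widehat{\boldsymbol{\sigma}}_{\mathcal{T}}-\boldsymbol{\zeta}\|_{\mathcal{C}}\lesssim\|\boldsymbol{\sigma}-\boldsymbol{\tau}\|_{\mathcal{C}}+\mathrm{osc}(f,\mathcal{T})+\sqrt{C_\xi}\,\widetilde{\eta}_{1}(\boldsymbol{\sigma}_{\mathcal{T}},\mathcal{T}).
$$

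A triangle inequality together with \eqref{eq:Mn3} then yields $\|\boldsymbol{\sigma}-\boldsymbol{\sigma}_{\mathcal{T}}\|_{\mathcal{C}}\lesssim\|\boldsymbol{\sigma}-\boldsymbol{\tau}\|_{\mathcal{C}}+\mathrm{osc}(f,\mathcal{T})+\sqrt{C_\xi}\,\widetilde{\eta}_{1}(\boldsymbol{\sigma}_{\mathcal{T}},\mathcal{T})$. On the estimator side, a triangle inequality inside the local projection $\boldsymbol{Q}_K^{k-2}$, together with its $L^2$-boundedness and $h_K\xi_K\le C_\xi$, gives $\widetilde{\eta}_{1}(\boldsymbol{\sigma}_{\mathcal{T}},\mathcal{T})\lesssim\widetilde{\eta}_{1}(\boldsymbol{\tau},\mathcal{T})+\sqrt{C_\xi}\,\|\boldsymbol{\sigma}_{\mathcal{T}}-\boldsymbol{\tau}\|_{\mathcal{C}}\lesssim\widetilde{\eta}_{1}(\boldsymbol{\tau},\mathcal{T})+\sqrt{C_\xi}\,(\|\boldsymbol{\sigma}-\boldsymbol{\sigma}_{\mathcal{T}}\|_{\mathcal{C}}+\|\boldsymbol{\sigma}-\boldsymbol{\tau}\|_{\mathcal{C}})$. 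Squaring, adding, and absorbing the $C_\xi$-scaled copies of the left-hand side into itself---legitimate under the same smallness condition $C_\xi\le C_{\xi 2}^{\ast}$ already used for discrete reliability and for contraction---closes the desired inequality.

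The principal obstacle is the $\boldsymbol{\varepsilon}^{\perp}\boldsymbol{\phi}$ branch: no Galerkin orthogonality connects $\widehat{\boldsymbol{\sigma}}_{\mathcal{T}}$ directly to $\boldsymbol{\sigma}$ through the bilinear form $a(\cdot,\cdot)$, so the postprocessing defect $\widehat{\boldsymbol{\sigma}}_{\mathcal{T}}-\boldsymbol{\sigma}_{\mathcal{T}}$ is forced into the bound and produces the factor $\sqrt{C_\xi}\,\widetilde{\eta}_{1}(\boldsymbol{\sigma}_{\mathcal{T}},\mathcal{T})$. Closing the argument unconditionally in $C_\xi$ appears out of reach with the tools at hand, and the final absorption step necessarily inherits the same smallness hypothesis on $C_\xi$ that underpins both the contraction property and the discrete reliability analysis of the paper.
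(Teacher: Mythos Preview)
Your argument is internally coherent, but it proves a strictly weaker statement than the lemma: you need $C_\xi$ small to absorb the term $\sqrt{C_\xi}\,\widetilde\eta_1(\boldsymbol{\sigma}_{\mathcal T},\mathcal T)$, whereas the paper's result is \emph{unconditional} in $C_\xi$. The route you take---discrete Helmholtz decomposition of $\widehat{\boldsymbol\sigma}_{\mathcal T}-\boldsymbol\zeta$---forces the postprocessing defect $\widehat{\boldsymbol\sigma}_{\mathcal T}-\boldsymbol\sigma_{\mathcal T}$ into the estimate, and that is precisely what produces the absorption loop you cannot close without a smallness hypothesis.

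The paper sidesteps this by never invoking the Helmholtz decomposition here. Instead it works directly with the full HCDG system: testing the error equation \eqref{eq:errorequation} with $\widetilde{\boldsymbol\tau}-\boldsymbol\sigma_{\mathcal T}$ for an arbitrary $\widetilde{\boldsymbol\tau}\in\boldsymbol\Sigma_{\mathcal T}^{\textsf{HHJ}}$, and combining with \eqref{hcdg2} (at $v=I_{\mathcal T}u-u_{\mathcal T}$) and \eqref{hcdg3} (at $\mu=\lambda_{\mathcal T}-Q_{\mathcal T}^{\partial}(\partial_{\boldsymbol n_e}u)$), yields an energy identity whose left side is
\[
\|\widetilde{\boldsymbol\tau}-\boldsymbol\sigma_{\mathcal T}\|_{\mathcal C}^2+\sum_{K}\int_{\partial K}\xi\,(\partial_{\boldsymbol n_e}u_{\mathcal T}-\lambda_{\mathcal T})^2\,ds.
\]
The second term is $\eqsim\widetilde\eta_1^2(\boldsymbol\sigma_{\mathcal T},\mathcal T)$ by \eqref{eq:Mn1}, so $\widetilde\eta_1^2(\boldsymbol\sigma_{\mathcal T},\mathcal T)$ sits on the \emph{left} from the start and never needs to be absorbed. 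On the right one finds $\sum_K\int_{\partial K}\xi(\partial_{\boldsymbol n_e}(I_{\mathcal T}u)-Q_{\mathcal T}^{\partial}(\partial_{\boldsymbol n_e}u))^2\,ds$, bounded by $\widetilde\eta_1^2(\boldsymbol\sigma,\mathcal T)$ via \eqref{eq:Mn4}, together with a cross term $(\|\boldsymbol\sigma-\widetilde{\boldsymbol\tau}\|_{\mathcal C}+\mathrm{osc}(f,\mathcal T))\,\|u_{\mathcal T}-I_{\mathcal T}u\|_{2,\mathcal T}$ coming from \eqref{eq:temp8}. The displacement error $\|u_{\mathcal T}-I_{\mathcal T}u\|_{2,\mathcal T}$ is then controlled by $\|\boldsymbol\sigma-\boldsymbol\sigma_{\mathcal T}\|_{\mathcal C}$ through the inf--sup condition \eqref{infsup} and the error equation, and a single Young absorption (of $\|\boldsymbol\sigma-\boldsymbol\sigma_{\mathcal T}\|_{\mathcal C}$ into itself, with no $C_\xi$ factor) closes the bound. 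Choosing $\widetilde{\boldsymbol\tau}=\boldsymbol\Pi_{\mathcal T}(\boldsymbol Q_{\mathcal T}\boldsymbol\sigma)$ and applying \eqref{eq:temp9} then replaces $\widetilde{\boldsymbol\tau}$ by an arbitrary $\boldsymbol\tau\in\boldsymbol\Sigma_{\mathcal T}$, as you do. The moral: exploiting the stabilization term of the method as part of the energy, rather than treating it as a perturbation via the postprocessed moment, is what makes the estimate unconditional.
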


\begin{proof}
It follows from \eqref{eq:errorequation} that for any $\widetilde{\boldsymbol{\tau}}\in\boldsymbol{\Sigma}_{\mathcal{T}}^{\textsf{HHJ}}$,
\[
a(\boldsymbol{\sigma}-\boldsymbol{\sigma}_{\mathcal{T}}, \widetilde{\boldsymbol{\tau}}-\boldsymbol{\sigma}_{\mathcal{T}}) + b_{\mathcal{T}}(\widetilde{\boldsymbol{\tau}}-\boldsymbol{\sigma}_{\mathcal{T}}, I_{\mathcal{T}}u-u_{\mathcal{T}})
=\sum_{K\in\mathcal{T}}\int_{\partial
K}M_{nn_e}(\boldsymbol{\sigma}_{\mathcal{T}})(\partial_{\boldsymbol{n}_e}u-\lambda_{\mathcal{T}}) ds.
\]
Combining \eqref{hcdg2} with $v=I_{\mathcal{T}}u-u_{\mathcal{T}}$ and \eqref{hcdg3} with $\mu=\lambda_{\mathcal{T}}-Q_{\mathcal{T}}^{\partial}(\partial_{\boldsymbol{n}_e}u)$,
\begin{align*}
&-b_{\mathcal{T}}(\boldsymbol{\sigma}_{\mathcal{T}}, I_{\mathcal{T}}u-u_{\mathcal{T}}) - \sum_{K\in\mathcal{T}}\int_{\partial
K}M_{nn_e}(\boldsymbol{\sigma}_{\mathcal{T}})(\partial_{\boldsymbol{n}_e}u-\lambda_{\mathcal{T}}) ds \\
& + \sum_{K\in\mathcal{T}}\int_{\partial
K}\xi(\partial_{\boldsymbol{n}_e}u_{\mathcal{T}}-\lambda_{\mathcal{T}})(\partial_{\boldsymbol{n}_e}(I_{\mathcal{T}}u)-Q_{\mathcal{T}}^{\partial}(\partial_{\boldsymbol{n}_e}u)-\partial_{\boldsymbol{n}_e}u_{\mathcal{T}}+\lambda_{\mathcal{T}}) ds \\
=&\int_{\Omega}f(I_{\mathcal{T}}u-u_{\mathcal{T}})dx.
\end{align*}
Thus we get from the last two equalities
\begin{align*}
&\|\widetilde{\boldsymbol{\tau}}-\boldsymbol{\sigma}_{\mathcal{T}}\|_{\mathcal{C}}^2+\sum_{K\in\mathcal{T}}\int_{\partial
K}\xi(\partial_{\boldsymbol{n}_e}u_{\mathcal{T}}-\lambda_{\mathcal{T}})^2ds \\
=&a(\widetilde{\boldsymbol{\tau}}-\boldsymbol{\sigma}, \widetilde{\boldsymbol{\tau}}-\boldsymbol{\sigma}_{\mathcal{T}}) - b_{\mathcal{T}}(\widetilde{\boldsymbol{\tau}}, I_{\mathcal{T}}u-u_{\mathcal{T}})-\int_{\Omega}f(I_{\mathcal{T}}u-u_{\mathcal{T}})dx \\
& +  \sum_{K\in\mathcal{T}}\int_{\partial
K}\xi(\partial_{\boldsymbol{n}_e}u_{\mathcal{T}}-\lambda_{\mathcal{T}})(\partial_{\boldsymbol{n}_e}(I_{\mathcal{T}}u)-Q_{\mathcal{T}}^{\partial}(\partial_{\boldsymbol{n}_e}u)) ds.
\end{align*}
According to the Cauchy-Schwarz inequality and \eqref{eq:temp8} with $\boldsymbol{\tau}=\widetilde{\boldsymbol{\tau}}$ and $v=u_{\mathcal{T}}-I_{\mathcal{T}}u$,
\begin{align*}
&\|\widetilde{\boldsymbol{\tau}}-\boldsymbol{\sigma}_{\mathcal{T}}\|_{\mathcal{C}}^2+\sum_{K\in\mathcal{T}}\int_{\partial
K}\xi(\partial_{\boldsymbol{n}_e}u_{\mathcal{T}}-\lambda_{\mathcal{T}})^2ds \\
\lesssim & \|\boldsymbol{\sigma}-\widetilde{\boldsymbol{\tau}}\|_{\mathcal{C}}^2  +  \sum_{K\in\mathcal{T}}\int_{\partial
K}\xi(\partial_{\boldsymbol{n}_e}(I_{\mathcal{T}}u)-Q_{\mathcal{T}}^{\partial}(\partial_{\boldsymbol{n}_e}u))^2 ds \\
& +\left(\|\boldsymbol{\sigma}-\widetilde{\boldsymbol{\tau}}\|_{\mathcal{C}} + \mathrm{osc}(f,\mathcal{T})\right)\|u_{\mathcal{T}}-I_{\mathcal{T}}u\|_{2,\mathcal{T}}.
\end{align*}
By the triangle inequality, \eqref{eq:Mn1} and \eqref{eq:Mn4}, we have
\begin{align*}
&\|\boldsymbol{\sigma}-\boldsymbol{\sigma}_{\mathcal{T}}\|_{\mathcal{C}}^2 + \widetilde{\eta}_{1}^2(\boldsymbol{\sigma}_{\mathcal{T}}, \mathcal{T}) \\
\lesssim & \|\boldsymbol{\sigma}-\widetilde{\boldsymbol{\tau}}\|_{\mathcal{C}}^2 + \|\widetilde{\boldsymbol{\tau}}-\boldsymbol{\sigma}_{\mathcal{T}}\|_{\mathcal{C}}^2 +
\sum_{K\in\mathcal{T}}\int_{\partial
K}\xi(\partial_{\boldsymbol{n}_e}u_{\mathcal{T}}-\lambda_{\mathcal{T}})^2ds \\
\lesssim & \|\boldsymbol{\sigma}-\widetilde{\boldsymbol{\tau}}\|_{\mathcal{C}}^2  +  \widetilde{\eta}_{1}^2(\boldsymbol{\sigma}, \mathcal{T})  +\left(\|\boldsymbol{\sigma}-\widetilde{\boldsymbol{\tau}}\|_{\mathcal{C}} + \mathrm{osc}(f, \mathcal{T})\right)\|u_{\mathcal{T}}-I_{\mathcal{T}}u\|_{2,\mathcal{T}}.
\end{align*}
On the other hand, using the inf-sup condition \eqref{infsup} with $v=u_{\mathcal{T}}-I_{\mathcal{T}}u$ and \eqref{eq:errorequation}, it holds
\[
\|u_{\mathcal{T}}-I_{\mathcal{T}}u\|_{2,\mathcal{T}}\lesssim\sup_{\boldsymbol{\tau}\in\boldsymbol{\Sigma}_{\mathcal{T}}^{\textsf{HHJ}}}\frac{b_{\mathcal{T}}(\boldsymbol{\tau},u_{\mathcal{T}}-I_{\mathcal{T}}u)}{\|\boldsymbol{\tau}\|_{0,\mathcal{T}}}=\sup_{\boldsymbol{\tau}\in\boldsymbol{\Sigma}_{\mathcal{T}}^{\textsf{HHJ}}}\frac{a(\boldsymbol{\sigma}-\boldsymbol{\sigma}_{\mathcal{T}}, \boldsymbol{\tau})}{\|\boldsymbol{\tau}\|_{0,\mathcal{T}}} \lesssim \|\boldsymbol{\sigma}-\boldsymbol{\sigma}_{\mathcal{T}}\|_{\mathcal{C}}.
\]
Hence we get from the last two inequalities and the Young's inequality
\[
\|\boldsymbol{\sigma}-\boldsymbol{\sigma}_{\mathcal{T}}\|_{\mathcal{C}}^2 + \widetilde{\eta}_{1}^2(\boldsymbol{\sigma}_{\mathcal{T}}, \mathcal{T})
\lesssim  \|\boldsymbol{\sigma}-\widetilde{\boldsymbol{\tau}}\|_{\mathcal{C}}^2  +  \widetilde{\eta}_{1}^2(\boldsymbol{\sigma}, \mathcal{T}) + \mathrm{osc}^2(f, \mathcal{T}).
\]
Now choose $\widetilde{\boldsymbol{\tau}}=\boldsymbol{\Pi}_{\mathcal{T}}(\boldsymbol{Q}_{\mathcal{T}}\boldsymbol{\sigma})$. We obtain from \eqref{eq:temp9} and the triangle inequality that for any $\boldsymbol{\tau}\in\boldsymbol{\Sigma}_{\mathcal{T}}$,
\begin{align*}
\|\boldsymbol{\sigma}-\boldsymbol{\sigma}_{\mathcal{T}}\|_{\mathcal{C}}^2 + \widetilde{\eta}_{1}^2(\boldsymbol{\sigma}_{\mathcal{T}}, \mathcal{T})
\lesssim & \|\boldsymbol{\sigma}-\boldsymbol{\tau}\|_{\mathcal{C}}^2  +  \widetilde{\eta}_{1}^2(\boldsymbol{\sigma}, \mathcal{T}) + \mathrm{osc}^2(f, \mathcal{T}) \\
\lesssim & \|\boldsymbol{\sigma}-\boldsymbol{\tau}\|_{\mathcal{C}}^2  +  \widetilde{\eta}_{1}^2(\boldsymbol{\tau}, \mathcal{T}) + \mathrm{osc}^2(f, \mathcal{T}).
\end{align*}
Finally we finish the proof by the arbitrariness of $\boldsymbol{\tau}$.
\end{proof}

\begin{lemma}
When $C_{\xi}\leq C_{\xi2}^{\ast}$,
there exists a positive constant $C_6$ depending only on the shape-regularity of the triangulations, the polynomial degree $k$ and the tensor $\mathcal{C}$ such that for any refinement $\mathcal{T}^*$  of $\mathcal{T}$,
\begin{equation}\label{eq:ETT}
E_{\mathcal{T}^{\ast}}(\boldsymbol{\sigma}_{\mathcal{T}^{\ast}})\leq C_6E_{\mathcal{T}}(\boldsymbol{\sigma}_{\mathcal{T}}).
\end{equation}
\end{lemma}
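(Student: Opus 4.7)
The plan is to bound the two constituents of
$E_{\mathcal{T}^{\ast}}(\boldsymbol{\sigma}_{\mathcal{T}^{\ast}})^2 = \|\boldsymbol{\sigma} - \boldsymbol{\sigma}_{\mathcal{T}^{\ast}}\|_{\mathcal{C}}^2 + \eta_1^2(\boldsymbol{\sigma}_{\mathcal{T}^{\ast}}, f, \mathcal{T}^{\ast})$
separately by $E_{\mathcal{T}}(\boldsymbol{\sigma}_{\mathcal{T}})^2$, using the discrete reliability \eqref{eq:posterioriestimatediscreteupperbound} as the single ingredient that actually consumes the hypothesis $C_\xi \leq C_{\xi 2}^{\ast}$. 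Everything else will follow from the estimator reduction and efficiency bounds already established.

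First I would control the discrete jump $\|\boldsymbol{\sigma}_{\mathcal{T}}-\boldsymbol{\sigma}_{\mathcal{T}^{\ast}}\|_{\mathcal{C}}$ entirely by $E_{\mathcal{T}}(\boldsymbol{\sigma}_{\mathcal{T}})$. Applying \eqref{eq:posterioriestimatediscreteupperbound}, monotonicity of the estimator over the subset $\mathcal{T}\setminus\mathcal{T}^{\ast}\subset\mathcal{T}$, and the efficiency \eqref{eq:posterioriestimatelowerbound} yields
\begin{equation*}
\|\boldsymbol{\sigma}_{\mathcal{T}}-\boldsymbol{\sigma}_{\mathcal{T}^{\ast}}\|_{\mathcal{C}}^2
\leq C_5\eta^2(\boldsymbol{\sigma}_{\mathcal{T}},f,\mathcal{T}\setminus\mathcal{T}^{\ast})
\leq C_5\bigl(\eta_1^2(\boldsymbol{\sigma}_{\mathcal{T}},f,\mathcal{T}) + C_2\|\boldsymbol{\sigma}-\boldsymbol{\sigma}_{\mathcal{T}}\|_{\mathcal{C}}^2\bigr)
\lesssim E_{\mathcal{T}}(\boldsymbol{\sigma}_{\mathcal{T}})^2.
\end{equation*}
A triangle inequality then delivers the first constituent,
$\|\boldsymbol{\sigma}-\boldsymbol{\sigma}_{\mathcal{T}^{\ast}}\|_{\mathcal{C}}^2
\leq 2\|\boldsymbol{\sigma}-\boldsymbol{\sigma}_{\mathcal{T}}\|_{\mathcal{C}}^2 + 2\|\boldsymbol{\sigma}_{\mathcal{T}}-\boldsymbol{\sigma}_{\mathcal{T}^{\ast}}\|_{\mathcal{C}}^2\lesssim E_{\mathcal{T}}(\boldsymbol{\sigma}_{\mathcal{T}})^2$.

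For the second constituent I would invoke the estimator reduction \eqref{eq:errorestimatorreduction1} with $\delta=1$, which gives
$\eta_1^2(\boldsymbol{\sigma}_{\mathcal{T}^{\ast}},f,\mathcal{T}^{\ast})
\leq 2\eta_1^2(\boldsymbol{\sigma}_{\mathcal{T}},f,\mathcal{T}) + 2C_a C_\xi\|\boldsymbol{\sigma}_{\mathcal{T}^{\ast}}-\boldsymbol{\sigma}_{\mathcal{T}}\|_{\mathcal{C}}^2$; the second summand is again controlled by $E_{\mathcal{T}}(\boldsymbol{\sigma}_{\mathcal{T}})^2$ thanks to the previous step and the bound $C_\xi\le C_{\xi 2}^{\ast}$. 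Summing the two estimates and taking square roots produces \eqref{eq:ETT} with a constant $C_6$ expressible in terms of $C_2$, $C_5$, $C_a$, $C_{\xi 2}^{\ast}$ and the usual shape-regularity, polynomial-degree and $\mathcal{C}$ dependencies.

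I do not expect a genuine obstacle: the substantive work is already done in the discrete reliability lemma (which is where the smallness of $C_\xi$ is actually used via the postprocessing-stability estimate \eqref{eq:stabilitypostprocess}) and in the estimator reduction inequality. The remaining care lies in verifying that $\eta^2(\boldsymbol{\sigma}_{\mathcal{T}},f,\mathcal{T}\setminus\mathcal{T}^{\ast})\leq\eta^2(\boldsymbol{\sigma}_{\mathcal{T}},f,\mathcal{T})$ before splitting into $\eta_1^2$ and $\eta_2^2$, and in ensuring the constants are absorbed cleanly into $C_6$; both are routine bookkeeping.
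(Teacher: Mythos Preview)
Your argument is correct and uses the same ingredients as the paper---discrete reliability \eqref{eq:posterioriestimatediscreteupperbound}, efficiency \eqref{eq:posterioriestimatelowerbound}, and the estimator reduction---but assembled in a different order. You first isolate the discrete jump $\|\boldsymbol{\sigma}_{\mathcal{T}}-\boldsymbol{\sigma}_{\mathcal{T}^{\ast}}\|_{\mathcal{C}}^2\lesssim E_{\mathcal{T}}(\boldsymbol{\sigma}_{\mathcal{T}})^2$ via discrete reliability plus efficiency, and then bound the two pieces of $E_{\mathcal{T}^{\ast}}^2$ separately using the triangle inequality and the $\eta_1$ reduction \eqref{eq:errorestimatorreduction1} with a fixed $\delta=1$. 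The paper instead works with the full estimator $\eta^2$: it writes $\|\boldsymbol{\sigma}-\boldsymbol{\sigma}_{\mathcal{T}^{\ast}}\|_{\mathcal{C}}^2+\eta^2(\boldsymbol{\sigma}_{\mathcal{T}^{\ast}},f,\mathcal{T}^{\ast})$, applies \eqref{eq:errorestimatorreduction} together with \eqref{eq:posterioriestimatediscreteupperbound}, and then chooses $\delta=C_5(C_aC_{\xi2}^{\ast}+C_4+1)/\Lambda$ so that the negative $-\Lambda\eta^2(\boldsymbol{\sigma}_{\mathcal{T}},f,\mathcal{T}\setminus\mathcal{T}^{\ast})$ term from the reduction exactly absorbs the $\|\boldsymbol{\sigma}_{\mathcal{T}}-\boldsymbol{\sigma}_{\mathcal{T}^{\ast}}\|_{\mathcal{C}}^2$ contributions; only at the end does it invoke efficiency to return to $E_{\mathcal{T}}$. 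Your route is more transparent and avoids the tuned $\delta$, while the paper's route simultaneously bounds the full estimator on $\mathcal{T}^{\ast}$ and exhibits an explicit constant.
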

\begin{proof}
For any $\delta>0$, it holds from the Young's inequality
\[
\|\boldsymbol{\sigma}-\boldsymbol{\sigma}_{\mathcal{T}^{\ast}}\|_{\mathcal{C}}^2\leq(1+\delta)\|\boldsymbol{\sigma}-\boldsymbol{\sigma}_{\mathcal{T}}\|_{\mathcal{C}}^2
+(1+\delta^{-1})\|\boldsymbol{\sigma}_{\mathcal{T}^{\ast}}-\boldsymbol{\sigma}_{\mathcal{T}}\|_{\mathcal{C}}^2.
\]
Using \eqref{eq:errorestimatorreduction} and \eqref{eq:posterioriestimatediscreteupperbound}, we have
\[
\eta^2(\boldsymbol{\sigma}_{\mathcal{T}^{\ast}}, f, \mathcal{T}^{\ast})\leq (1+\delta)\eta^2(\boldsymbol{\sigma}_{\mathcal{T}}, f, \mathcal{T}) + (1+\delta^{-1})\left(C_aC_{\xi2}^{\ast}+C_4-\frac{\delta\Lambda}{C_5}\right)\|\boldsymbol{\sigma}_{\mathcal{T}^{\ast}}-\boldsymbol{\sigma}_{\mathcal{T}}\|_{\mathcal{C}}^2,
\]
Then adding the last two inequality and choosing $\delta=\frac{C_5(C_aC_{\xi2}^{\ast}+C_4+1)}{\Lambda }$, it follows
\begin{align*}
&\|\boldsymbol{\sigma}-\boldsymbol{\sigma}_{\mathcal{T}^{\ast}}\|_{\mathcal{C}}^2 + \eta^2(\boldsymbol{\sigma}_{\mathcal{T}^{\ast}}, f, \mathcal{T}^{\ast}) \\
\leq & \frac{\Lambda + C_5(C_aC_{\xi2}^{\ast}+C_4+1)}{\Lambda}\left(\|\boldsymbol{\sigma}-\boldsymbol{\sigma}_{\mathcal{T}}\|_{\mathcal{C}}^2 + \eta^2(\boldsymbol{\sigma}_{\mathcal{T}}, f, \mathcal{T})\right).
\end{align*}
On the other hand, it follows from \eqref{eq:posterioriestimatelowerbound}
\[
E_{\mathcal{T}}^2(\boldsymbol{\sigma}_{\mathcal{T}}) \eqsim \|\boldsymbol{\sigma}-\boldsymbol{\sigma}_{\mathcal{T}}\|_{\mathcal{C}}^2 + \eta^2(\boldsymbol{\sigma}_{\mathcal{T}}, f, \mathcal{T}).
\]
Thus we can complete the proof from the last two inequalities.
\end{proof}

\subsection{Approximation class and the complexity}

For any integer $N\geq\#\mathcal{T}_0$, let $\mathbb{T}_N$ be the set of all possible conforming triangulations $\mathcal{T}$ refined from the initial mesh $\mathcal{T}_0$ satisfying $\#\mathcal{T}\leq N$. Define
\[
\mathbb{A}_s:= \left\{(\boldsymbol{\sigma}, f): |\boldsymbol{\sigma}, f|_s:=\sup_{N\geq\#\mathcal{T}_0}N^{s}\inf_{\mathcal{T}\in\mathbb{T}_N}\inf_{\boldsymbol{\tau}\in\boldsymbol{\Sigma}_{\mathcal{T}}}E_{\mathcal{T}}(\boldsymbol{\tau})<+\infty\right\}.
\]

\begin{lemma}\label{lem:totalerrorTTk}
Assume $C_{\xi}\leq C_{\xi2}^{\ast}$. Then for a given $\chi\in (0,1)$, we can choose some refinement $\mathcal{T}^*$ of $\mathcal{T}$ such that
\begin{enumerate}
\item $E_{\mathcal{T}^*}(\boldsymbol{\sigma}_{\mathcal{T}^{\ast}})\leq \chi E_{\mathcal{T}}(\boldsymbol{\sigma}_{\mathcal{T}})$,
\item $\#\mathcal{T}^*-\#\mathcal{T}
\lesssim \chi^{-1/s}E_{\mathcal{T}}^{-1/s}(\boldsymbol{\sigma}_{\mathcal{T}})|\boldsymbol{\sigma}, f|_s^{1/s}$.
\end{enumerate}
\end{lemma}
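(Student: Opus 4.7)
The plan is to mimic the standard optimality argument from the adaptive finite element literature (cf. \cite{CasconKreuzerNochettoSiebert2008, Stevenson2008}), with Lemma~\ref{lem:teoptimality} playing the role that the usual Galerkin quasi-optimality plays in the conforming setting. First I fix a tolerance $\varepsilon>0$ (to be chosen at the end) and invoke the definition of $\mathbb{A}_s$ to select a conforming refinement $\mathcal{T}_\varepsilon$ of $\mathcal{T}_0$ with
\[
\inf_{\boldsymbol{\tau}\in\boldsymbol{\Sigma}_{\mathcal{T}_\varepsilon}}E_{\mathcal{T}_\varepsilon}(\boldsymbol{\tau})\leq\varepsilon \qquad\text{and}\qquad \#\mathcal{T}_\varepsilon-\#\mathcal{T}_0\lesssim \varepsilon^{-1/s}|\boldsymbol{\sigma},f|_s^{1/s}.
\]
I then take $\mathcal{T}^{\ast}$ to be the overlay of $\mathcal{T}$ and $\mathcal{T}_\varepsilon$ (the coarsest common conforming refinement produced by newest vertex bisection), for which the standard overlay cardinality bound gives $\#\mathcal{T}^{\ast}-\#\mathcal{T}\leq \#\mathcal{T}_\varepsilon-\#\mathcal{T}_0$.

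The next step is to estimate $E_{\mathcal{T}^{\ast}}(\boldsymbol{\sigma}_{\mathcal{T}^{\ast}})$. Applying Lemma~\ref{lem:teoptimality} on $\mathcal{T}^{\ast}$ and using $\boldsymbol{\Sigma}_{\mathcal{T}_\varepsilon}\subset\boldsymbol{\Sigma}_{\mathcal{T}^{\ast}}$ (which holds because $\mathcal{T}^{\ast}$ refines $\mathcal{T}_\varepsilon$ and $\boldsymbol{\Sigma}$ has no inter-element continuity constraint) yields
\[
E_{\mathcal{T}^{\ast}}(\boldsymbol{\sigma}_{\mathcal{T}^{\ast}})\;\lesssim\;\inf_{\boldsymbol{\tau}\in\boldsymbol{\Sigma}_{\mathcal{T}^{\ast}}}E_{\mathcal{T}^{\ast}}(\boldsymbol{\tau})\;\leq\;\inf_{\boldsymbol{\tau}\in\boldsymbol{\Sigma}_{\mathcal{T}_\varepsilon}}E_{\mathcal{T}^{\ast}}(\boldsymbol{\tau}).
\]
Both $\mathrm{osc}^2(f,\cdot)$ and $\widetilde{\eta}_1^2(\boldsymbol{\tau},\cdot)$ are sums of local terms of the form $h_K^{\alpha}\|g-\boldsymbol{Q}_K^{r}g\|_{0,K}^2$ with $\alpha>0$ (recalling $h_K\xi_K=C_0h_K^{1+\gamma}$ with $\gamma>-1$), so these quantities are monotone non-increasing under refinement. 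Hence $E_{\mathcal{T}^{\ast}}(\boldsymbol{\tau})\leq E_{\mathcal{T}_\varepsilon}(\boldsymbol{\tau})$ for every $\boldsymbol{\tau}\in\boldsymbol{\Sigma}_{\mathcal{T}_\varepsilon}$, and passing to the infimum produces $E_{\mathcal{T}^{\ast}}(\boldsymbol{\sigma}_{\mathcal{T}^{\ast}})\leq C_{\star}\varepsilon$ with $C_{\star}$ determined solely by the constant in Lemma~\ref{lem:teoptimality}.

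To close the argument I set $\varepsilon:=C_{\star}^{-1}\chi E_{\mathcal{T}}(\boldsymbol{\sigma}_{\mathcal{T}})$. Assertion (1) follows immediately from the previous display, and substituting this choice of $\varepsilon$ into the overlay count gives
\[
\#\mathcal{T}^{\ast}-\#\mathcal{T}\;\lesssim\;\varepsilon^{-1/s}|\boldsymbol{\sigma},f|_s^{1/s}\;\lesssim\;\chi^{-1/s}E_{\mathcal{T}}^{-1/s}(\boldsymbol{\sigma}_{\mathcal{T}})|\boldsymbol{\sigma},f|_s^{1/s},
\]
which is assertion (2). The only step that really requires care is checking the monotonicity $E_{\mathcal{T}^{\ast}}(\boldsymbol{\tau})\leq E_{\mathcal{T}_\varepsilon}(\boldsymbol{\tau})$ for $\boldsymbol{\tau}\in\boldsymbol{\Sigma}_{\mathcal{T}_\varepsilon}$, since the stabilization weight $h_K\xi_K$ enters $\widetilde{\eta}_1$; the hypothesis $\gamma>-1$ is precisely what makes this factor decay under bisection. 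Beyond that the argument is routine, with Lemma~\ref{lem:teoptimality} (rather than Galerkin orthogonality) providing the crucial quasi-best-approximation bound for the HCDG total error.
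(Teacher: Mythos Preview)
Your argument is correct, and it runs essentially parallel to the paper's proof: both pick an approximation-class mesh $\mathcal{T}_\varepsilon$ (the paper writes $\mathcal{T}_\chi$), form the overlay $\mathcal{T}^*=\mathcal{T}\oplus\mathcal{T}_\varepsilon$, and use the standard overlay cardinality bound $\#\mathcal{T}^*-\#\mathcal{T}\leq\#\mathcal{T}_\varepsilon-\#\mathcal{T}_0$ from \cite{CasconKreuzerNochettoSiebert2008}.

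The one genuine difference is in how you pass from the coarse mesh $\mathcal{T}_\varepsilon$ to the overlay $\mathcal{T}^*$. The paper first applies Lemma~\ref{lem:teoptimality} on $\mathcal{T}_\chi$ to obtain $E_{\mathcal{T}_\chi}(\boldsymbol{\sigma}_{\mathcal{T}_\chi})\leq\tfrac{\chi}{C_6}E_{\mathcal{T}}(\boldsymbol{\sigma}_{\mathcal{T}})$ and then invokes the separate stability estimate \eqref{eq:ETT}, $E_{\mathcal{T}^*}(\boldsymbol{\sigma}_{\mathcal{T}^*})\leq C_6E_{\mathcal{T}_\chi}(\boldsymbol{\sigma}_{\mathcal{T}_\chi})$, whose proof relies on the discrete reliability \eqref{eq:posterioriestimatediscreteupperbound} and hence on the hypothesis $C_\xi\leq C_{\xi2}^{\ast}$. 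You instead apply Lemma~\ref{lem:teoptimality} directly on $\mathcal{T}^*$ and reach back to $\mathcal{T}_\varepsilon$ via the inclusion $\boldsymbol{\Sigma}_{\mathcal{T}_\varepsilon}\subset\boldsymbol{\Sigma}_{\mathcal{T}^*}$ together with the elementary monotonicity $E_{\mathcal{T}^*}(\boldsymbol{\tau})\leq E_{\mathcal{T}_\varepsilon}(\boldsymbol{\tau})$ for fixed $\boldsymbol{\tau}$; your check that $h_K\xi_K=C_0h_K^{1+\gamma}$ decreases under bisection because $\gamma>-1$ is exactly what is needed to make $\widetilde{\eta}_1^2$ monotone. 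This route bypasses \eqref{eq:ETT} altogether and, as a side effect, does not actually use the assumption $C_\xi\leq C_{\xi2}^{\ast}$. The paper's route has the advantage that \eqref{eq:ETT} is needed anyway elsewhere (implicitly in the complexity analysis), so no new ingredient is introduced; your route is more self-contained for this particular lemma.
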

\begin{proof}
By the definition of $\mathbb{A}_s$ and Lemma~\ref{lem:teoptimality}, there exists a triangulation $\mathcal{T}_{\chi}$ which is some refinement of $\mathcal{T}_0$ such that
\[
E_{\mathcal{T}_{\chi}}(\boldsymbol{\sigma}_{\mathcal{T}_{\chi}})\leq \frac{\chi}{C_6} E_{\mathcal{T}}(\boldsymbol{\sigma}_{\mathcal{T}}),
\]
and
\[
\#\mathcal{T}_{\chi}-\#\mathcal{T}_0\lesssim
\chi^{-1/s}E_{\mathcal{T}}^{-1/s}(\boldsymbol{\sigma}_{\mathcal{T}})|\boldsymbol{\sigma}, f|_s^{1/s}.
\]
Let $\mathcal{T}^*=\mathcal{T}\cup\mathcal{T}_{\chi}$. Then using \eqref{eq:ETT}, it holds
\[
E_{\mathcal{T}^*}(\boldsymbol{\sigma}_{\mathcal{T}^{\ast}})\leq C_6E_{\mathcal{T}_{\mathcal{\chi}}}(\boldsymbol{\sigma}_{\mathcal{T}_{\chi}})\leq \chi E_{\mathcal{T}}(\boldsymbol{\sigma}_{\mathcal{T}}).
\]
Finally according to Lemma~3.7 in \cite{CasconKreuzerNochettoSiebert2008}, we have
\[
\#\mathcal{T}^*-\#\mathcal{T}\leq \#\mathcal{T}_{\chi}-\#\mathcal{T}_0
\lesssim
\chi^{-1/s}E_{\mathcal{T}}^{-1/s}(\boldsymbol{\sigma}_{\mathcal{T}})|\boldsymbol{\sigma}, f|_s^{1/s}.
\]
This ends the proof.
\end{proof}

\begin{lemma}\label{lem:MkTTk}
In the D\"{o}fler marking, we choose the positive parameter $\theta$ small
enough such that
\begin{equation}\label{theta_restriction}
\theta<\frac{1}{(C_2+1)(C_3^2(C_{\xi2}^{\ast}+1)+1 + C_5(1+(C_3C_{\xi2}^{\ast})^2+2C_{\xi2}^{\ast}C_a))}.
\end{equation}
Set
\[
\chi=\sqrt{\frac{1}{2}\left(1-(C_2+1)(C_3^2(C_{\xi2}^{\ast}+1)+1 + C_5(1+(C_3C_{\xi2}^{\ast})^2+2C_{\xi2}^{\ast}C_a))\theta\right)}.
\]
Let $\mathcal{T}^*$ be a refinement of $\mathcal{T}_m$
such that $E_{\mathcal{T}^*}(\boldsymbol{\sigma}_{\mathcal{T}^{\ast}})\leq \chi E_{\mathcal{T}_m}(\boldsymbol{\sigma}_m)$. When $C_{\xi}\leq C_{\xi2}^{\ast}$, then
\[
\#\mathcal{M}_m\leq \#\mathcal{T}^*-\#\mathcal{T}_m.
\]
\end{lemma}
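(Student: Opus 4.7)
The plan is to verify D\"{o}rfler's bulk criterion for the set of refined elements $\mathcal{R}:=\mathcal{T}_m\setminus\mathcal{T}^{\ast}$, i.e.
$$\eta^2(\boldsymbol{\sigma}_m,f,\mathcal{R})\geq\theta\,\eta^2(\boldsymbol{\sigma}_m,f,\mathcal{T}_m).$$
Once this is known, the minimality of $\mathcal{M}_m$ in step MARK yields $\#\mathcal{M}_m\leq \#\mathcal{R}$, and a standard refinement count (each bisected triangle contributes at least two children to $\mathcal{T}^{\ast}\setminus\mathcal{T}_m$) gives $\#\mathcal{R}\leq \#\mathcal{T}^{\ast}-\#\mathcal{T}_m$, closing the proof.

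To reach the bulk criterion I will establish the auxiliary estimate
$$E_{\mathcal{T}_m}^{2}(\boldsymbol{\sigma}_m)\leq 2\,E_{\mathcal{T}^{\ast}}^{2}(\boldsymbol{\sigma}_{\mathcal{T}^{\ast}}) + c'\,\eta^2(\boldsymbol{\sigma}_m,f,\mathcal{R}),$$
with $c':=C_3^2(C_{\xi 2}^{\ast}+1)+1+C_5\bigl(1+(C_3C_{\xi 2}^{\ast})^2+2C_{\xi 2}^{\ast}C_a\bigr)$, precisely the bracketed factor in the hypothesis on $\theta$. Combining with $E_{\mathcal{T}^{\ast}}^{2}\leq\chi^2 E_{\mathcal{T}_m}^{2}$ and $2\chi^2=1-(C_2+1)c'\theta$ gives $(C_2+1)\theta\,E_{\mathcal{T}_m}^{2}\leq\eta^2(\boldsymbol{\sigma}_m,f,\mathcal{R})$. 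On the other hand, the efficiency \eqref{eq:posterioriestimatelowerbound} together with the definition of $E_{\mathcal{T}_m}$ yields $(C_2+1)E_{\mathcal{T}_m}^{2}(\boldsymbol{\sigma}_m)\geq\eta_1^{2}(\boldsymbol{\sigma}_m,f,\mathcal{T}_m)+\eta_2^{2}(\boldsymbol{\sigma}_m,\mathcal{T}_m)=\eta^{2}(\boldsymbol{\sigma}_m,f,\mathcal{T}_m)$, and the D\"{o}rfler inequality follows.

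To prove the auxiliary estimate I split $E_{\mathcal{T}_m}^{2}=\|\boldsymbol{\sigma}-\boldsymbol{\sigma}_m\|_{\mathcal{C}}^{2}+\eta_1^{2}(\boldsymbol{\sigma}_m,f,\mathcal{T}_m)$ and treat the two pieces separately. For the energy error I expand
$$\|\boldsymbol{\sigma}-\boldsymbol{\sigma}_m\|_{\mathcal{C}}^{2}=\|\boldsymbol{\sigma}-\boldsymbol{\sigma}_{\mathcal{T}^{\ast}}\|_{\mathcal{C}}^{2}+\|\boldsymbol{\sigma}_m-\boldsymbol{\sigma}_{\mathcal{T}^{\ast}}\|_{\mathcal{C}}^{2}+2a(\boldsymbol{\sigma}-\boldsymbol{\sigma}_{\mathcal{T}^{\ast}},\boldsymbol{\sigma}_m-\boldsymbol{\sigma}_{\mathcal{T}^{\ast}}),$$
bound the cross term by the quasi-orthogonality \eqref{eq:quasiorthogonality} followed by Young's inequality with weight one, and absorb the resulting $\|\boldsymbol{\sigma}_m-\boldsymbol{\sigma}_{\mathcal{T}^{\ast}}\|_{\mathcal{C}}^{2}$ via discrete reliability \eqref{eq:posterioriestimatediscreteupperbound}; the elementary bound $\max\{C_\xi,1\}\leq C_{\xi 2}^{\ast}+1$ then produces the contribution $C_3^2(C_{\xi 2}^{\ast}+1)+C_5(1+(C_3C_{\xi 2}^{\ast})^2)$ to $c'$. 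For the data/stabilization piece I decompose $\mathcal{T}_m=\mathcal{R}\cup(\mathcal{T}_m\cap\mathcal{T}^{\ast})$; on any $K\in\mathcal{T}_m\cap\mathcal{T}^{\ast}$ the mesh is unchanged, so $\mathrm{osc}^2(f,K)$ coincides for the two meshes, and a triangle inequality on $\mathcal{C}\boldsymbol{\sigma}_m-\boldsymbol{Q}_K^{k-2}(\mathcal{C}\boldsymbol{\sigma}_m)$, combined with $h_K\xi_K\leq C_\xi$, gives $\widetilde{\eta}_1^{2}(\boldsymbol{\sigma}_m,\mathcal{T}_m\cap\mathcal{T}^{\ast})\leq 2\,\widetilde{\eta}_1^{2}(\boldsymbol{\sigma}_{\mathcal{T}^{\ast}},\mathcal{T}_m\cap\mathcal{T}^{\ast})+2C_\xi C_a\|\boldsymbol{\sigma}_m-\boldsymbol{\sigma}_{\mathcal{T}^{\ast}}\|_{\mathcal{C}}^{2}$. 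A final application of discrete reliability to this last term produces the remaining contribution $1+2C_5C_{\xi 2}^{\ast}C_a$ to $c'$.

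The main difficulty is precisely this constant bookkeeping: the inflation factor $(1+\delta)$ in the estimator reduction \eqref{eq:errorestimatorreduction1} would, with $\delta$ small, force the $C_aC_\xi/\delta$ term to blow up, so I avoid \eqref{eq:errorestimatorreduction1} for the $\eta_1$ piece and instead use the direct split-and-triangle argument above; and in the quasi-orthogonality step the Young weight must be taken equal to $1$ (producing the factor $2$ in front of $E_{\mathcal{T}^{\ast}}^{2}$) so that everything is consumable by a single application of $E_{\mathcal{T}^{\ast}}^{2}\leq\chi^2 E_{\mathcal{T}_m}^{2}$ via $1-2\chi^2=(C_2+1)c'\theta$.
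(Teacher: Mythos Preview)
Your proposal is correct and follows essentially the same approach as the paper: split $E_{\mathcal{T}_m}^2$ into the energy error and $\eta_1^2$, control the energy error via the Pythagoras-type identity plus quasi-orthogonality \eqref{eq:quasiorthogonality} with Young weight $1$, control the $\eta_1$ piece on $\mathcal{T}_m\cap\mathcal{T}^\ast$ by a triangle inequality, invoke discrete reliability \eqref{eq:posterioriestimatediscreteupperbound} to eliminate $\|\boldsymbol{\sigma}_m-\boldsymbol{\sigma}_{\mathcal{T}^\ast}\|_{\mathcal{C}}^2$, and finish with efficiency \eqref{eq:posterioriestimatelowerbound} to reach the D\"orfler inequality for $\mathcal{R}=\mathcal{T}_m\setminus\mathcal{T}^\ast$. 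The paper merely organizes the bookkeeping slightly differently (it first derives a lower bound for $(1+(C_3C_{\xi2}^\ast)^2)\|\boldsymbol{\sigma}_{\mathcal{T}^\ast}-\boldsymbol{\sigma}_m\|_{\mathcal{C}}^2$ and applies discrete reliability once at the end, whereas you apply it twice along the way), but the constants and the logic coincide exactly. One harmless typo: your displayed expansion of $\|\boldsymbol{\sigma}-\boldsymbol{\sigma}_m\|_{\mathcal{C}}^2$ should carry $-2a(\boldsymbol{\sigma}-\boldsymbol{\sigma}_{\mathcal{T}^\ast},\boldsymbol{\sigma}_m-\boldsymbol{\sigma}_{\mathcal{T}^\ast})$, not $+2a(\cdot,\cdot)$; since you immediately bound $|a(\cdot,\cdot)|$ via \eqref{eq:quasiorthogonality}, this does not affect the argument.
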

\begin{proof}
According to the definition of $\widetilde{\eta}_1$ and the Young's inequality,
\[
 \widetilde{\eta}_1^2(\boldsymbol{\sigma}_{m}, \mathcal{T}_m\cap\mathcal{T}^{\ast})
\leq  2\widetilde{\eta}_1^2(\boldsymbol{\sigma}_{\mathcal{T}^{\ast}}, \mathcal{T}_m\cap\mathcal{T}^{\ast}) + 2C_{\xi2}^{\ast}\sum_{K\in\mathcal{T}_m\cap\mathcal{T}^{\ast}}\|\mathcal{C}(\boldsymbol{\sigma}_{\mathcal{T}^{\ast}}-\boldsymbol{\sigma}_{m})\|_{0,K}^2.
\]
Hence
\begin{align*}
\eta_1^2(\boldsymbol{\sigma}_{m}, f, \mathcal{T}_m\cap\mathcal{T}^{\ast})= & \widetilde{\eta}_1^2(\boldsymbol{\sigma}_{m}, \mathcal{T}_m\cap\mathcal{T}^{\ast}) + \mathrm{osc}^2(f, \mathcal{T}_m\cap\mathcal{T}^{\ast}) \\
\leq & 2\eta_1^2(\boldsymbol{\sigma}_{\mathcal{T}^{\ast}}, f, \mathcal{T}_m\cap\mathcal{T}^{\ast}) + 2C_{\xi2}^{\ast}C_{a}\|\boldsymbol{\sigma}_{\mathcal{T}^{\ast}}-\boldsymbol{\sigma}_{m}\|_{\mathcal{C}}^2,
\end{align*}
which immediately implies
\begin{equation}\label{eq:temp14}
\eta_1^2(\boldsymbol{\sigma}_{m}, f, \mathcal{T}_m)\leq \eta_1^2(\boldsymbol{\sigma}_{m}, f, \mathcal{T}_m\backslash\mathcal{T}^{\ast}) + 2\eta_1^2(\boldsymbol{\sigma}_{\mathcal{T}^{\ast}}, f, \mathcal{T}^{\ast}) + 2C_{\xi2}^{\ast}C_{a}\|\boldsymbol{\sigma}_{\mathcal{T}^{\ast}}-\boldsymbol{\sigma}_{m}\|_{\mathcal{C}}^2.
\end{equation}
Due to \eqref{eq:quasiorthogonality} with $\mathcal{T}=\mathcal{T}_{m}$ and the Young's inequality, it holds
\begin{align*}
&-2a(\boldsymbol{\sigma}-\boldsymbol{\sigma}_{\mathcal{T}^{\ast}}, \boldsymbol{\sigma}_{m}-\boldsymbol{\sigma}_{\mathcal{T}^{\ast}}) \\
\leq&\|\boldsymbol{\sigma}-\boldsymbol{\sigma}_{\mathcal{T}^{\ast}}\|_{\mathcal{C}}^2 + C_3^2(\max\{C_{\xi2}^{\ast}, 1\}\eta_{1}^2(\boldsymbol{\sigma}_{m}, f, \mathcal{T}_m\backslash\mathcal{T}^{\ast}) + \left(C_{\xi2}^{\ast}\right)^2\|\boldsymbol{\sigma}_{\mathcal{T}^{\ast}}-\boldsymbol{\sigma}_{m}\|_{\mathcal{C}}^2).
\end{align*}
which together with
\[
\|\boldsymbol{\sigma}_{\mathcal{T}^{\ast}}-\boldsymbol{\sigma}_{m}\|_{\mathcal{C}}^2=\|\boldsymbol{\sigma}-\boldsymbol{\sigma}_{m}\|_{\mathcal{C}}^2 + 2a(\boldsymbol{\sigma}-\boldsymbol{\sigma}_{\mathcal{T}^{\ast}}, \boldsymbol{\sigma}_{m}-\boldsymbol{\sigma}_{\mathcal{T}^{\ast}}) - \|\boldsymbol{\sigma}-\boldsymbol{\sigma}_{\mathcal{T}^{\ast}}\|_{\mathcal{C}}^2
\]
means
\begin{align*}
&(1+(C_3C_{\xi2}^{\ast})^2)\|\boldsymbol{\sigma}_{\mathcal{T}^{\ast}}-\boldsymbol{\sigma}_{m}\|_{\mathcal{C}}^2 \\
\geq & \|\boldsymbol{\sigma}-\boldsymbol{\sigma}_{m}\|_{\mathcal{C}}^2 - 2\|\boldsymbol{\sigma}-\boldsymbol{\sigma}_{\mathcal{T}^{\ast}}\|_{\mathcal{C}}^2 -
C_3^2(C_{\xi2}^{\ast}+1)\eta_{1}^2(\boldsymbol{\sigma}_{m}, f, \mathcal{T}_m\backslash\mathcal{T}^{\ast}) \\
=&E_{\mathcal{T}_m}^2(\boldsymbol{\sigma}_{m}) -2E_{\mathcal{T}^*}^2(\boldsymbol{\sigma}_{\mathcal{T}^{\ast}}) - \eta_1^2(\boldsymbol{\sigma}_{m}, f, \mathcal{T}_m) + 2\eta_1^2(\boldsymbol{\sigma}_{\mathcal{T}^{\ast}}, f, \mathcal{T}^{\ast}) \\
& -
C_3^2(C_{\xi2}^{\ast}+1)\eta_{1}^2(\boldsymbol{\sigma}_{m}, f, \mathcal{T}_m\backslash\mathcal{T}^{\ast}) .
\end{align*}
Then it follows from \eqref{eq:temp14} and \eqref{eq:posterioriestimatediscreteupperbound} with $\mathcal{T}=\mathcal{T}_{m}$
\begin{align*}
&(1-2\chi^2)E_{\mathcal{T}_m}^2(\boldsymbol{\sigma}_{m})\leq E_{\mathcal{T}_m}^2(\boldsymbol{\sigma}_{m}) -2E_{\mathcal{T}^{\ast}}^2(\boldsymbol{\sigma}_{\mathcal{T}^{\ast}}) \\
\leq & (1+(C_3C_{\xi2}^{\ast})^2+2C_{\xi2}^{\ast}C_a)\|\boldsymbol{\sigma}_{\mathcal{T}^{\ast}}-\boldsymbol{\sigma}_{m}\|_{\mathcal{C}}^2 + (C_3^2(C_{\xi2}^{\ast}+1)+1)\eta_{1}^2(\boldsymbol{\sigma}_{m}, f, \mathcal{T}_m\backslash\mathcal{T}^{\ast}) \\
\leq & (C_3^2(C_{\xi2}^{\ast}+1)+1 + C_5(1+(C_3C_{\xi2}^{\ast})^2+2C_{\xi2}^{\ast}C_a))\eta^2(\boldsymbol{\sigma}_{m}, f, \mathcal{T}_m\backslash\mathcal{T}^{\ast})
\end{align*}
Due to \eqref{eq:posterioriestimatelowerbound} with $\mathcal{T}=\mathcal{T}_{m}$,
\[
\frac{1}{C_2+1}\eta^2(\boldsymbol{\sigma}_m, f, \mathcal{T}_m)\leq  \frac{1}{C_2}\eta_2^2(\boldsymbol{\sigma}_m, \mathcal{T}_m) + \eta_1^2(\boldsymbol{\sigma}_m, f, \mathcal{T}_m)
\leq  E_{\mathcal{T}_m}^2(\boldsymbol{\sigma}_{m}).
\]
Combining the last two inequalities, it holds
\begin{align*}
&\frac{1-2\chi^2}{C_2+1}\eta^2(\boldsymbol{\sigma}_m, f, \mathcal{T}_m) \\
\leq & (C_3^2(C_{\xi2}^{\ast}+1)+1 + C_5(1+(C_3C_{\xi2}^{\ast})^2+2C_{\xi2}^{\ast}C_a))\eta^2(\boldsymbol{\sigma}_{m}, f, \mathcal{T}_m\backslash\mathcal{T}^{\ast}).
\end{align*}
By the choice of $\chi$, we obtain
\[
\eta^2(\boldsymbol{\sigma}_{m}, f, \mathcal{T}_m\backslash\mathcal{T}^{\ast})\geq \theta\eta^2(\boldsymbol{\sigma}_m, f, \mathcal{T}_m).
\]
Since in the marking strategy we choose the minimal set $\mathcal{M}_m$ such that
\[
\eta^2(\boldsymbol{\sigma}_{m}, f, \mathcal{M}_m)\geq \theta\eta^2(\boldsymbol{\sigma}_{m}, f, \mathcal{T}_m).
\]
We conclude that
\[
\#\mathcal{M}_m\leq \#(\mathcal{T}_m \backslash \mathcal{T}^*)\leq \#\mathcal{T}^*-\#\mathcal{T}_m,
\]
as required.
\end{proof}

The following important lemma concerns on the number of elements marked in
 the making procedure. It immediately follows Lemma~\ref{lem:totalerrorTTk}
  and Lemma~\ref{lem:MkTTk}.
\begin{lemma}
Assume that the marking parameter
$\theta$ verifies \eqref{theta_restriction}. Let $\mathcal{M}_m\subset \mathcal{T}_m$  be the set with  the
minimal number of simplices such that
\[
\eta^2(\boldsymbol{\sigma}_{m}, f, \mathcal{M}_m)\geq \theta\eta^2(\boldsymbol{\sigma}_{m}, f, \mathcal{T}_m).
\]
When $C_{\xi}\leq C_{\xi2}^{\ast}$, then
\begin{equation}\label{eq:temp15}
\#\mathcal{M}_m\lesssim E_{\mathcal{T}_m}^{-1/s}(\boldsymbol{\sigma}_{m})|\boldsymbol{\sigma}, f|_s^{1/s}.
\end{equation}
\end{lemma}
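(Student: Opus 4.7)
The plan is to simply chain Lemma~\ref{lem:totalerrorTTk} and Lemma~\ref{lem:MkTTk}. The constant $\chi$ prescribed in Lemma~\ref{lem:MkTTk} depends only on $C_2$, $C_3$, $C_5$, $C_a$, $C_{\xi2}^{\ast}$, and on $\theta$, all of which are fixed once we have chosen $\theta$ to satisfy the admissibility condition \eqref{theta_restriction}. In particular $\chi\in(0,1)$ is an absolute constant from the point of view of the loop index $m$, so any factor of the form $\chi^{-1/s}$ may be absorbed into the generic constant hidden by $\lesssim$.

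First I would invoke Lemma~\ref{lem:totalerrorTTk} with the triangulation $\mathcal{T}=\mathcal{T}_m$ and with this specific value of $\chi$. Since $C_{\xi}\leq C_{\xi2}^{\ast}$ is in force and $(\boldsymbol{\sigma},f)\in\mathbb{A}_s$, the lemma produces a conforming refinement $\mathcal{T}^{\ast}$ of $\mathcal{T}_m$ with the two properties
\[
E_{\mathcal{T}^{\ast}}(\boldsymbol{\sigma}_{\mathcal{T}^{\ast}})\leq \chi\, E_{\mathcal{T}_m}(\boldsymbol{\sigma}_m),\qquad \#\mathcal{T}^{\ast}-\#\mathcal{T}_m\lesssim \chi^{-1/s}E_{\mathcal{T}_m}^{-1/s}(\boldsymbol{\sigma}_m)|\boldsymbol{\sigma},f|_s^{1/s}.
\]
The first of these is precisely the hypothesis required to trigger Lemma~\ref{lem:MkTTk}, so applying that lemma to the same $\mathcal{T}^{\ast}$ immediately yields $\#\mathcal{M}_m\leq \#\mathcal{T}^{\ast}-\#\mathcal{T}_m$.

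Chaining the two bounds gives
\[
\#\mathcal{M}_m\leq \#\mathcal{T}^{\ast}-\#\mathcal{T}_m\lesssim \chi^{-1/s}E_{\mathcal{T}_m}^{-1/s}(\boldsymbol{\sigma}_m)|\boldsymbol{\sigma},f|_s^{1/s}\lesssim E_{\mathcal{T}_m}^{-1/s}(\boldsymbol{\sigma}_m)|\boldsymbol{\sigma},f|_s^{1/s},
\]
which is exactly \eqref{eq:temp15}. There is no genuine obstacle here: all the substantial work has already been carried out in Lemma~\ref{lem:totalerrorTTk} (which relies on the quasi-optimality of the total error from Lemma~\ref{lem:teoptimality} and the stability bound \eqref{eq:ETT}) and in Lemma~\ref{lem:MkTTk} (which is where the discrete reliability \eqref{eq:posterioriestimatediscreteupperbound}, the quasi-orthogonality \eqref{eq:quasiorthogonality}, and the smallness of $\theta$ enter). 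The only delicate point one should double-check when writing out the proof is that the $\chi$ from Lemma~\ref{lem:MkTTk} indeed lies in $(0,1)$ under \eqref{theta_restriction}, which is immediate from the definition since the expression inside the square root is positive.
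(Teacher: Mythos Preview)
Your proposal is correct and matches the paper's approach exactly: the paper simply states that the lemma ``immediately follows Lemma~\ref{lem:totalerrorTTk} and Lemma~\ref{lem:MkTTk}'', and you have spelled out precisely that chaining, including the observation that the fixed constant $\chi^{-1/s}$ can be absorbed into $\lesssim$.
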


We are now in a position to derive the asymptotic estimate for the total error.
\begin{theorem}\label{thm:complexity}
Assume that the marking parameter
$\theta$ verifies \eqref{theta_restriction} and the
initial mesh $\mathcal{T}_{0}$ satisfies condition (b) of section 4
in \cite{Stevenson2008}.
Let $(\boldsymbol{\sigma},u)$ be the solution of problem
\eqref{problem1} and
$\{\mathcal{T}_{m},(\boldsymbol{\sigma}_{m},u_{m}, \lambda_{m})\}$ be the
sequence of meshes and discrete solutions
produced by Algorithm \ref{alg:ahcdg}. If $(\boldsymbol{\sigma},f)\in \mathbb{A}_s$ and $C_{\xi}\leq\min\{C_{\xi1}^{\ast}, C_{\xi2}^{\ast}\}$, then there holds
\[
\|\boldsymbol{\sigma}-\boldsymbol{\sigma}_{m}\|_{\mathcal{C}}^2 + \eta_{1}^2(\boldsymbol{\sigma}_{m}, f, \mathcal{T}_m) \lesssim
(\#\mathcal{T}_m-\#\mathcal{T}_0)^{-2s}|\boldsymbol{\sigma}, f|_s^2.
\]
\end{theorem}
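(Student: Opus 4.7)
The plan is to assemble the proof from three previously established ingredients: the contraction property \eqref{ahcdgconvergence}, the cardinality bound \eqref{eq:temp15} for the marked set, and Stevenson's complexity estimate \eqref{stevensonlemma} for the newest vertex bisection. Since the hypotheses $C_\xi \leq C_{\xi 1}^\ast$ and $C_\xi \leq C_{\xi 2}^\ast$ are exactly what is needed to invoke both the contraction and the bound on $\#\mathcal{M}_m$, all the pieces are in place.

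First I would recast the contraction inequality \eqref{ahcdgconvergence} in terms of $E_{\mathcal{T}_m}(\boldsymbol{\sigma}_m)$. By the equivalence \eqref{eq:totalquasierror}, the quantity controlled by \eqref{ahcdgconvergence} is comparable to $E_{\mathcal{T}_m}^2(\boldsymbol{\sigma}_m)$. Hence there exists $\alpha < 1$ such that $E_{\mathcal{T}_m}^2(\boldsymbol{\sigma}_m) \lesssim \alpha^{m-j} E_{\mathcal{T}_j}^2(\boldsymbol{\sigma}_j)$ for every $0 \leq j \leq m$, which rearranges to
\[
E_{\mathcal{T}_j}^{-1/s}(\boldsymbol{\sigma}_j) \lesssim \alpha^{(m-j)/(2s)}\, E_{\mathcal{T}_m}^{-1/s}(\boldsymbol{\sigma}_m).
\]

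Next I would combine \eqref{stevensonlemma} with \eqref{eq:temp15} and the display above:
\[
\#\mathcal{T}_m - \#\mathcal{T}_0 \lesssim \sum_{j=0}^{m-1} \#\mathcal{M}_j \lesssim |\boldsymbol{\sigma},f|_s^{1/s} \sum_{j=0}^{m-1} E_{\mathcal{T}_j}^{-1/s}(\boldsymbol{\sigma}_j) \lesssim |\boldsymbol{\sigma},f|_s^{1/s}\, E_{\mathcal{T}_m}^{-1/s}(\boldsymbol{\sigma}_m) \sum_{j=0}^{m-1} \alpha^{(m-j)/(2s)}.
\]
Since $\alpha < 1$, the geometric series is bounded by a constant independent of $m$, so
\[
(\#\mathcal{T}_m - \#\mathcal{T}_0)\, E_{\mathcal{T}_m}^{1/s}(\boldsymbol{\sigma}_m) \lesssim |\boldsymbol{\sigma},f|_s^{1/s}.
\]
Raising this to the $2s$ power and using $E_{\mathcal{T}_m}^2(\boldsymbol{\sigma}_m) = \|\boldsymbol{\sigma} - \boldsymbol{\sigma}_m\|_{\mathcal{C}}^2 + \eta_1^2(\boldsymbol{\sigma}_m, f, \mathcal{T}_m)$ directly from the definition of the total error yields the desired bound.

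The argument is essentially bookkeeping once the contraction \eqref{ahcdgconvergence} and the marked-set estimate \eqref{eq:temp15} are available; the real work — proving quasi-orthogonality, discrete reliability under the smallness condition on $C_\xi$, and quasi-optimality of the total error via the two connection operators — has already been carried out in the preceding sections. The only subtlety is making sure the same smallness bound $C_\xi \leq \min\{C_{\xi 1}^\ast, C_{\xi 2}^\ast\}$ simultaneously licenses invocation of both the contraction and the cardinality bound, which is precisely the hypothesis of the theorem.
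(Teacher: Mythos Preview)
Your proposal is correct and follows essentially the same argument as the paper: use the equivalence \eqref{eq:totalquasierror} to translate the contraction \eqref{ahcdgconvergence} into a geometric decay for $E_{\mathcal{T}_j}(\boldsymbol{\sigma}_j)$, feed the cardinality bound \eqref{eq:temp15} into Stevenson's estimate \eqref{stevensonlemma}, and sum the resulting geometric series. The paper merely makes the intermediate quantity $e_m^2:=\|\boldsymbol{\sigma}-\boldsymbol{\sigma}_m\|_{\mathcal{C}}^2+\beta_1\eta_1^2+\beta_2\eta^2$ explicit before passing to $E_{\mathcal{T}_m}$, but the logic is identical.
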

\begin{proof}
Denote $e_m^2:=\|\boldsymbol{\sigma}-\boldsymbol{\sigma}_{m}\|_{\mathcal{C}}^2 + \beta_1\eta_{1}^2(\boldsymbol{\sigma}_{m}, f, \mathcal{T}_{m}) + \beta_2\eta^2(\boldsymbol{\sigma}_{m}, f, \mathcal{T}_{m})$, then \eqref{eq:totalquasierror} can be rewritten as $e_m\eqsim E_{\mathcal{T}_m}(\boldsymbol{\sigma}_{m})$.
By \eqref{stevensonlemma} and \eqref{eq:temp15}, it holds
\[
\#\mathcal{T}_m-\#\mathcal{T}_0\lesssim \sum_{j=0}^{m-1}\#\mathcal{M}_j \lesssim
|\boldsymbol{\sigma}, f|_s^{1/s}\sum_{j=0}^{m-1}E_{\mathcal{T}_j}^{-1/s}(\boldsymbol{\sigma}_{j}).
\]
Using \eqref{eq:totalquasierror} and \eqref{ahcdgconvergence}, we have
\[
E_{\mathcal{T}_j}^{-1/s}(\boldsymbol{\sigma}_{j})\lesssim e_j^{-1/s}\lesssim \alpha^{(m-j)/(2s)}e_{m}^{-1/s}\lesssim \alpha^{(m-j)/(2s)}E_{\mathcal{T}_m}^{-1/s}(\boldsymbol{\sigma}_{m}).
\]
Therefore
\[
\#\mathcal{T}_m-\#\mathcal{T}_0 \lesssim
|\boldsymbol{\sigma}, f|_s^{1/s}E_{\mathcal{T}_m}^{-1/s}(\boldsymbol{\sigma}_{m})\sum_{j=0}^{m-1}\alpha^{(m-j)/(2s)}\lesssim |\boldsymbol{\sigma}, f|_s^{1/s}E_{\mathcal{T}_m}^{-1/s}(\boldsymbol{\sigma}_{m}).
\]
The desired result then follows.
\end{proof}

\FloatBarrier

\bibliographystyle{siam}
\bibliography{ahcdgkirchhoff}

\end{document}